\newcommand{\Spec}{{\mathrm{Spec}\, }}
\newcommand{\bbar}[1]{{\overline{#1}}}
\newcommand{\cA}{{\mathcal A}}
\newcommand{\cB}{{\mathscr B}}
\newcommand{\cC}{{\mathscr C}}
\newcommand{\cE}{{\mathcal E}}
\newcommand{\cF}{{\mathcal F}}
\newcommand{\cG}{{\mathcal G}}
\newcommand{\cI}{{\mathcal I}}
\newcommand{\cJ}{{\mathcal J}}
\newcommand{\cO}{{\mathcal O}}
\newcommand{\cS}{{\mathscr S}}
\newcommand{\cT}{{\mathscr T}}
\newcommand{\uT}{\underline{\mathscr T}}
\newcommand{\uL}{\underline{L}}
\newcommand{\uP}{\underline{P}}
\newcommand{\cV}{{\mathcal V}}
\renewcommand{\sp}[1]{{\mathrm{Spec}}}
\renewcommand{\o}{\omega}
\newcommand{\sg}{\mathrm{Sing}}
\newcommand{\ma}{\mathrm{Max}}
\renewcommand{\max}{\mathrm{max}}
\theoremstyle{plain}
\newtheorem{thm}{Theorem}[section]
\newtheorem{pro}[thm]{Proposition}
\newtheorem{cor}[thm]{Corollary}
\theoremstyle{definition}
\newtheorem{Def}[thm]{Definition}
\newtheorem{rem}[thm]{Remark}
\newtheorem{exa}[thm]{Example}
\newtheorem{voi}[thm]{}
\begin{document}

\bigskip 
\title{Resolution algorithms and deformations}
\author{Augusto Nobile}
\begin{abstract}
An algorithm for resolution of singularities in characteristic zero  is described. It is expressed in terms of multi-ideals, that essentially are defined as a  finite sequence of pairs, each one consiting of  a sheaf of ideals and a positive integer. This approach is particularly simple and, as indicated by some results shown here, it seems suitable for applications to a good theory of simultaneous algorithmic resolution of singularities, specially for families parametrized by the spectrum of an artinian ring.
\end{abstract}

\address{Louisiana State University \\
Department of Mathematics \\
Baton Rouge, LA 70803, USA}

\subjclass{14E15, 14F05, 14B05, 14B99, 14E99}

\keywords{Multi-ideals,  blowing-up, permissible centers, resolution, equivalence, equiresolution.}

\email{nobile@math.lsu.edu}
\maketitle

\section*{Introduction}
\label{S:intro}

Algorithmic, constructive, or canonical methods to resolve singularities of algebraic varieties  attempt to clarify and simplify the original proof of the main desingularization theorem (\cite{Hir}). These are programs to eliminate the singularities of an algebraic variety by means of a sequence of blowing-ups with well determined regular centers. So far these algorithms proceed indirectly working primarily with some auxilary objects, such as marked ideals, basic objects or presentations. Suitable resolution algorithms for such objects imply similar results for varieties. 
  At present there are several algorithmic processes to desingularize algebraic varieties over fields of characteristic zero 
(\cite{BEV}, \cite{BM}, \cite{BMF}, \cite{Cu}, \cite{EH}, \cite{EV}, 
  \cite{Kol}, \cite{T2}, \cite{V1}, \cite{W}). 

Algorithmic resolutions (in characteristic zero) being available nowadays, it becomes reasonable to investigate   the possibility to  simultaneously resolve the members of a family of varieties, or their related objects, using a given  resolution algorithm. Results in this direction were obtained in  \cite{EN}, in the case where the parameter scheme is regular. In  \cite {T1} and \cite{T3}, the general case was studied using essentially the algorithm of \cite{EV}.

More precisely,   in \cite{T1} we considered the crucial 
 case where the parameter space is  the spectrum of an artinian ring, i.e.,  that of an infinitesimal deformation of an object over a field. 
 Most of the discussion of that paper 
 is in the context of {\it basic objects}, i.e., systems $(W,I,b,E)$ where $W$ is a variety smooth over a characteristic zero field $k$, $I$ is a coherent sheaf of ${\cO}_W$-ideals, $b$ a positive integer, and $E$ a set of regular divisors of $W$ with normal crossings. To develop a reasonable theory  of simultaneous resolution, or ``equiresolution'',  
  we try to imitate 
 what the  algorithm does when the base is  a  field. First,   we introduce basic objects over an artinian ring and a notion of permissible centers in this context, i.e., the centers we allow in our blow-ups. 
  Given  a basic object $B$  over an artinian ring,  
 we have a naturally defined closed fiber $B^{(0)}$, which is a basic object over a field. 
 Then we attempt to ``naturally extend'' the permissible 
centers 
 used in the  algorithmic resolution of $B^{(0)}$ to permissible centers of  $B$ and its transforms. 
When this can be done for  all the centers used in the algorithmic resolution of $B^{(0)}$,  
we say that $B$  is {\it algorithmically equisolvable}.

Working over a field,  the algorithmic resolution process usually requires an ``inductive step''. 
 Indeed, given a a basic object $B=(W,I,b,E)$ satisfying certain conditions, often it is necessary 
 to substitute  $B$ 
(near a point  $x \in W$) by another basic object $B^{\star}=(Z,J,c,E^{\star})$, where $Z$ is a suitable hypersurface, defined on an appropriate neighborhood $U$ of $x$. Since  $\dim Z < \dim W$,   by induction on the dimension, we have a  first (or zeroth) algorithmic resolution center for $B^{\star}$. This is a closed subscheme of $U$, i.e., a locally closed subscheme of $W$. Since these centers are defined just locally,  there is a glueing problem. But it can be proved  that they agree on intersections to produce a closed subscheme $C$ of $W$, which is a $B$-permissible center. This is the first algorithmic center for $B$.  

Working over an artinian ring $A$, in  \cite{T1} we have shown that it is possible,    to some extent, to imitate  the above constructions. But there is a drawback. Indeed, if  $B$ is a basic object over   $A$ 
 with special fiber $B^{(0)}$, we can impose reasonable conditions so that when $B^{(0)}$ is in the inductive situation, an analog for $B^{\star}$ is defined. 
 If, by induction on the dimension, $B^{\star}$ is algorithmically equisolvable, then 
we have a  first algorithmic equiresolution center $ C$ for $B^{\star}$. This   is a closed subscheme of $W$, which should be the first center for $B$. But, unfortunately, sometimes this subscheme $C$ of $W$ might not be a permissible center for $B$, because an equality of orders of certain ideals required for such centers may fail. Thus permissibility is a condition to be imposed in the definition of equiresolution, so this notion is not strictly recursive.

This ``pathology'' is due to the fact that  
 working over artinian rings, the local rings of our schemes  have nilpotents. The algorithm used in \cite{T1} involves certain constructions like   the {\it coefficient ideal} $C(I)$ and the auxiliary object $B_s''$ described in \cite{EV}, which require to take powers of ideals. Since our rings are not reduced, certain powers of elements that, by analogy with the classical case, should not be zero, sometimes vanish causing the mentioned difficulties. Thus, for application to deformations, it seems convenient to use a resolution algorithm  where these constructions 
are substituted by others that do not involve powers of ideals.

The purpose of this paper is to propose such a resolution algorithm.
 Unlike in \cite{EV} we 
 do not work with basic objects, but rather with   {\it multi-ideals}, which are, essentially, systems 
 $(W, (I_1,b_1), \ldots, (I_n,b_n),E)$, where for each $i$, $(W,I_i,b_i,E)$ is a basic object. 

Section \ref{S:multi}  introduces the basic notions of multi-ideals, transforms, equivalence, algorithmic resolution. In section \ref{S:tools} we study some useful tools, including  some ``differential calculus'' and the formalism to implement the inductive step mentioned above (``hypersurfaces of maximal contact''). Section \ref{S:A} discusses monomial ideals and the technique to reduce the general situation to one where induction can be applied. Section \ref{S:C}  presents our algorithm. The usual glueing problem in  the inductive step is handled by using the naturality, or functoriality,  properties of our algorithm (a technique started in \cite{BMF} and followed in \cite{T2}). Finally in  section  \ref{S:Ap},  we explain how our algorithm can be extended, to a certain extent, to the situation where we work 
over a suitable artinian ring rather than over a field. We also explain how  the  center permissibility problem is no longer present. 

 The idea to work with multi-ideals to avoid  this difficulty was suggested to the author by a referee of the article \cite{T1}. I thank this mathematician for the advice.

\section{Multi-ideals}

In this section we introduce the most basic notions that will be used in the article.

\label{S:multi}
\begin{voi}
\label{V:m1} 
Throughout we shall use the notation and terminology of \cite{H}, with a few exceptions that  we   explain next.

If $W$ is a scheme, a $W$-ideal means a coherent sheaf of ${\cO}_W$-ideals.  
If $Y$ is a closed subscheme of a scheme $W$, the symbol $I_W (Y)$ denotes the $W$-ideal defining $Y$.  If $W$ is a reduced scheme, a never-zero $W$-ideal is a $W$-ideal $I$ such that the stalk $I_x$ is not zero for all $x \in W$.  
 An algebraic variety over a field $k$ is a reduced algebraic $k$-scheme. We  work throughout with the class  $\cV$ of algebraic varieties defined over fields of characteristic zero but with minor changes we could work with the more general class of schemes $\cS$ introduced in \cite[8.1]{BEV}. A positive divisor in an algebraic variety $X$ is called a hypersurface of $X$. 

 The order of an ideal $I$ in a local ring $A$, with maximal ideal $M$,  is the largest integer $s$ such that $I\subseteq M^s$.  If $W$ is a noetherian  scheme, $I$ is a $W$-ideal, and $x \in W$, then $\nu _x(I)$  denotes the order of the ideal  $I_x$ of ${\cO}_{W,x}$. 

 Often we  consider functions $f$ from a set $S$ to a totally ordered set $\Lambda$. We let $\max \, (f)$ designate the maximum value of $f$ and $\ma \, (f)$ the set of points $x$ where $f(x)$ is the maximum. 

The natural, rational, complex numbers and the integers will be denoted by $\mathbb N$, $\mathbb Q$, $\mathbb C$ and  $\mathbb Z$ respectively.

\end{voi}

\begin{voi} 
\label{V:par}
Let $W$ be a regular variety. An {\it idealistic pair on W}, or a $W$-{\it pair}, or just a {\it pair}  is an ordered pair 
$\uP = (I,b)$ where $I$ is a $W$-ideal and $b$ a positive integer. 
 The {\it singular set} of $\uP$ is 
$  \{ z \in W: \nu _x(I) \ge b        \}   $. This is  a closed subset of $W$ (see, e.g., \cite{EV}),  denoted by 
$\sg (\uP)$ or $\sg(I,b)$. If $I$ is a never-zero $W$-ideal, then the open set $W \setminus \sg (I,b)$ is dense in $W$. 

A regular subscheme $C$ of $\sg (\uP)$ is called a {\it permissible center} for $\uP$. If $W_1$ is the blowing-up of $W$ with a permissible center $C$, then $W_1$ is regular and we may define  several $W_1$-ideals associated to $I$  called its {\it transforms}:
\begin{itemize}
\item[(i)] the {\it total transform} $I{\cO}_{W_1}$; 
\item[(ii)] the {\it controlled transform} $I[1]:={\cE}^{-b}{\cO}_{W_1}$ where $\cE$ defines the exceptional divisor of the blowing-up;
\item[(iii)] the  {\it proper (or weak) transform} ${\bbar I}[1]:=  {\cE}^{-a}{\cO}_{W_1}$ where the exponent $a$ is as large as possible.   If $C$ is irreducible with generic point $y$,  then $a$ is constant equal to $\nu _y(I)$ (in general $a$ is locally constant).
\end{itemize}
The pair (on $W_1$) ${\uP}_1 = (I[1],b)$ is called the permissible transform of $\uP$, sometimes denoted by $\uT (\uP,C)$.

A pair  $(I,b)$ is said to be {\it good} \cite{EV} or {\it simple} \cite{EVU}, if   $\nu_x(I)=b$ for all $x \in \sg(I,b)$.  In that case, for any permissible center, the controlled and proper transforms  of the pair coincide.

An ordered $n$-tuple $(I_1,b_1), \ldots, (I_n,b_n) $ of $W$-pairs will be called a {\it multi-pair}. By definition its singular set is the set 
$\bigcap _{i=1}^n \sg(I_i,b_i)$.
\end{voi}

\begin{voi}
\label{V:m2}
Let $M$ be a regular variety and $E = (H_1, \ldots , H_m)$ a sequence of regular hypersurfaces of $M$.

(a) $E$ have normal crossings if, for all $x \in H_1 \cup \cdots \cup H_m$, the ideal 
${I(H_1 \cup  \ldots \cup H_m)}_x \subset {\cO}_{M,x}$ 
is generated by $a_1 \ldots a _r$, for some $r$, $1 \le r \le n$, where 
$a_1, a_2, \ldots, a_n$
 is a suitable regular system of parameters  of $\cO _{M,x}$.

(b) We say that  a closed  subscheme $V  \subset M$ has normal crossings with respect to $E$ (resp. is transversal to $E$) if, for all $x \in V$, there is a regular system of parameters $a_1, \ldots, a_n$ of $\cO _{M,x}$, such that $I(V)_x=(a_1, \ldots, a_r){{\cO}_{M,x}}$, $1 \le r \le n$, and if for any hypersurface $H_j$ containing $x$, we have  $I(H_j) _x = (a_i){\cO}_{M,x}$ for some index $i$ (resp. for some index $i>r$). Such a subscheme $V$ is necessarily regular.
\end{voi}

\begin{Def}
\label{D:multi-ideal}  
 A {\it multiple marked ideal,} or simply, a {\it multi-ideal}, is a tuple 
$$\cI = (M,W,(I_1,b_1), \ldots, (I_n,b_n),E)$$
 where $M$ is a regular variety in $\cV$, 
$E=(H_1, \ldots, H_m)$ is an ordered $m$-tuple of distinct hypersurfaces of $M$ with normal crossings, 
$W$ is a   equidimensional subvariety of $M$ transversal to $E$, and $(I_j,b_j)$ is a  $W$-pair,  $j=1, \ldots, n$.  

 This multi-ideal is called {\it nonzero} if for all $x \in W$ there is an index $i$ such that the stalk ${(I_i)}_x$ is a nonzero ideal of ${\cO}_{W,x}$. 
 
 The scheme $W$, which is necessarily regular,  is called the {\it underlying scheme} of $\cI$ and is denoted by $us(\cI)$ while the scheme $M$ is called the {\it ambient scheme} of $\cI$ and is  denoted by $as(\cI)$.

The idealistic pairs  $(I_j,b_j)$ are called the {\it pairs} of $\cI$.
 By the assumed transversality, the restriction  
 $E|W:= (H_1 \cap W, \ldots, H_m \cap W)$ (where we ignore empty intersections) has again normal crossings on $W$. 

 If $n=1$ then  $\cI$ is called a {\it marked ideal}   
 or a {\it basic object}.

In the definition of multi-ideal, the most essential scheme is $W$, the underlying one, and we might avoid mentioning $M$, the ambient variety. This approach (followed, e.g., in 
\cite{EV} and \cite{T1}) would simplify the notation, and much of the theory could be developed  practically without changes. However, when in \ref{V:eqiv} we investigate the notion of {\it equivalence}, even  for  objects with different underlying schemes, it seems that the given definition is more convenient.
\end{Def}

\begin{Def}
\label{D:sing}
 The {\it singular set } of the multi-ideal $\cI = (M,W,(I_1,b_1), \ldots, (I_n,b_n),E)$ is 
 $\sg (\cI)=\{   x \in W : \nu _x(I_j ) \ge b_j, ~ j=1, \ldots n        \}$. We have   
 $\sg(\cI)= \bigcap _{j=1}^{n}   \sg ( I_j,b_j) $, which is  a closed set of $W$. If $\cI$  is nonzero,
 then, for any irreducible component $W'$ of $W=us(\cI)$,    we have    $\sg (\cI) \cap W' \not= W'$.

 If $\sg(\cI)$ is empty (resp. not empty), we say that $\cI$ is {\it resolved} (resp. {\it nonresolved}).
\end{Def}

\begin{voi}
\label{V:m3} We use the notation of \ref{D:multi-ideal}.

(a)  {\it Permissible centers and transformations}. 
  Given a multi-ideal $\cI$         we say that a closed subscheme $C$ of $W$ is a {\it permissible center} for  $\cI$, or that it is $\cI$-{\it permissible},  if $C$ has normal crossings with $E$ and  $C \subseteq \sg (\cI) $. An $\cI$-permissible center  $C$  is necessarily  a regular subscheme of $W$,  and $C$ is permissible for each of the pairs $(I_j,b_j)$.

If $C$ is $\cI$-permissible,  the transform of the multi-ideal $\cI$ with center $C$ is   the multi-ideal 
${\cI}[1] = (M_1, W_1, ({I_1}[1], b_1), \ldots, (I_[1],b_n,  E_1)$  
where  $M_1$ is the blowing-up of $M$ with center $C$, $W_1$ is the strict transform of $W_1$ (identifiable to the blowing-up of $W$ with center $C$), the $W_1$-pair $(I_{j}[1], b_j) $ is the  transform of the $W$-pair $(I_j,b_j)$, $j=1, \ldots, n$, and $E_1=(H'_1, \ldots,H'_m,H'_{m+1})$ (with $H'_i$ the strict transform of $H_i$, $i=1, \ldots, m$, and $H'_{m+1}$ the exceptional divisor).
 We write $\cI _1=\uT(\cI,C)$ and we denote a transformation of the multi-ideal $\cI$ by the symbol $\cI \leftarrow {\cI}_1$.

A sequence of multi-ideals and arrows
 $\cI _0 \leftarrow \cdots \leftarrow \cI_s $
is called a {\it permissible sequence} if each arrow stands for a transformation with a permissible center $C_j \subset us(\cI _j)$.

(b) {\it Pull-backs.} If $f: M' \to M$ is a smooth morphism,  the {\it pull-back} of the multi-ideal $\cI$ is the multi-ideal
 $f^*(\cI):=(M',W',(I_1{\cO}_{W'},b_1), \ldots,    (I_n{\cO}_{W'},b_n), E')$, 
 where $W'=f^{-1}(W)$ and $E'=(f^{-1}(H_1), \ldots , f^{-1}(H_m))$, ignoring the empty entries, if any. If $f$ is an isomorphism, we talk about an {\it isomorphism} of multi-ideals.  If $M'=U$ is an open subscheme of $M$ and $f$ is the inclusion, $f^*(\cI)$ will be called the restriction of $\cI$ to $U$ (usually denoted by ${\cI}|U$).

(c) {\it Extensions and open restrictions}.  Here are two important special cases of pull-backs. 
\begin{itemize}
\item[(i)]  $M'=M \times _k {\bf A}_k^1$ (where $M$ is defined over the field $k$) and $f$ the first projection. 
 The resulting  multi-ideal ${\cI}(e):=(M', W', I_1, b, E(e))$ 
   is called the {\it extension} of $\cI$ (see \cite{EV}).

 \item[(ii)] $W=U$ is an open subset of $M$ and $f$ is the inclusion. The resulting multi-ideal is the {\it restriction} of $\cI$ to $U$, denoted by 
${\cI}|U$. An {\it open restriction} of $\cI$ is a restriction to some open set of $M$.
\end{itemize}

(d) {\it Resolutions}. A {\it resolution} of a multi-ideal $\cI$ is a sequence of multi-ideals and permissible transformations, 
$\cI:= {\cI}_0 \leftarrow \cdots \leftarrow {\cI}_r$, 
such that $\sg ({\cI}_r) = \emptyset$. 
\end{voi}

\begin{voi} 
\label{V:eqiv}
{\it Equivalence}. A sequence 
$ \cI:= {\cI}_0 \leftarrow \cdots \leftarrow {\cI}_s$ is {\it local} if each arrow stands for a permissible transformation, or an open restriction, or an extension.

\smallskip

 Multi-ideals 
$\cI=(M,W,(I_1,b_1), \ldots, (I_n,b_n),E)$,     
$\cJ=(M,V,(J_1,b_1), \ldots, (J_p,b_p),E)$ 
are {\it equivalent}  (denoted $\cI \sim \cJ$) if:
\begin{itemize}
 \item [(a)] $\sg(\cI)=\sg(\cJ)$,
\item [(b)] Whenever  
$\cI:= {\cI}_0 \leftarrow \cdots \leftarrow {\cI}_s$ and 
$ \cJ:= {\cJ}_0 \leftarrow \cdots \leftarrow {\cJ}_s $
 are local sequences of multi-ideals, where corresponding arrows stand for the same type of operation (a permissible transformation with the same center, an extension, or a restriction to a common open set),   we have 
$\sg(\cI _s)=\sg(\cJ _s)$. Equivalently, instead of this equality of singular sets, we could require that a scheme $C$ be a permissible center for $   \cI _s   $ if and only if $C$ were a permissible center for $\cJ _s$.
\end{itemize}
\end{voi}

The above expression ``local sequence'' was used by O. Villamayor. The following result is easily proved.
\begin{pro}
 \label{P:abi}
If $\cI$ and $ \cJ$ are equivalent multi-ideals and  $U$ is an open set in $as(\cI)=as(\cJ)$, then 
$\cI |U \sim \cJ |U$.
\end{pro}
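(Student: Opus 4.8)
The plan is to verify directly the two defining conditions of \ref{V:eqiv} for the pair $\cI|U$, $\cJ|U$, reducing each to the corresponding condition already known for $\cI$, $\cJ$. Note at the outset that $as(\cI|U)=U=as(\cJ|U)$, so the two restrictions share a common ambient scheme and it makes sense to ask whether they are equivalent.

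For condition (a), the key remark is that an open restriction does not change orders: if $x\in us(\cI)\cap U$ then $\cO_{us(\cI|U),x}=\cO_{us(\cI),x}$, so $\nu_x(I_j\cO_{us(\cI|U)})=\nu_x(I_j)$ for every $j$, whence $\sg(\cI|U)=\sg(\cI)\cap U$; likewise $\sg(\cJ|U)=\sg(\cJ)\cap U$. Since $\sg(\cI)=\sg(\cJ)$ by hypothesis, this gives $\sg(\cI|U)=\sg(\cJ|U)$.

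For condition (b) I would use the following simple device: a local sequence emanating from $\cI|U$ becomes, after prepending the open-restriction arrow $\cI\leftarrow\cI|U$, a local sequence emanating from $\cI$. Precisely, let
\[\cI|U=\cF_0\leftarrow\cdots\leftarrow\cF_s,\qquad \cJ|U=\cG_0\leftarrow\cdots\leftarrow\cG_s\]
be local sequences in which corresponding arrows stand for the same type of operation. Prepending $\cI\leftarrow\cF_0$ and $\cJ\leftarrow\cG_0$ yields local sequences
\[\cI\leftarrow\cF_0\leftarrow\cdots\leftarrow\cF_s,\qquad \cJ\leftarrow\cG_0\leftarrow\cdots\leftarrow\cG_s\]
of length $s+1$ out of $\cI$ and $\cJ$: the new first arrow is, in both cases, the restriction to the same open set $U\subseteq as(\cI)=as(\cJ)$ --- which is one of the operations allowed in a local sequence and one of the matching types in \ref{V:eqiv}(b) --- and the remaining $s$ arrows are copied verbatim from the given sequences, hence still match in type. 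Applying clause (b) of the hypothesis $\cI\sim\cJ$ to this pair of sequences yields $\sg(\cF_s)=\sg(\cG_s)$, which is exactly clause (b) for $\cI|U\sim\cJ|U$; the equivalent reformulation in terms of permissible centers follows in the same way.

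I do not anticipate a genuine obstacle here; the proof is essentially bookkeeping. The only points that need care are that the two restrictions have the same ambient scheme $U$, that ``restriction to a common open set'' is explicitly among the operations permitted both in a local sequence (\ref{V:eqiv}) and in clause (b) of the definition of equivalence, and that the prepended arrow $\cI\leftarrow\cI|U$ is a bona fide first step of a local sequence (by \ref{V:m3}(c)(ii)). No compatibility between iterated restrictions is required, since past the first arrow we merely reuse the given sequences.
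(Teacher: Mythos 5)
Your proof is correct; the paper itself omits the argument (stating only that the result ``is easily proved''), and your reduction --- computing $\sg(\cI|U)=\sg(\cI)\cap U$ for condition (a) and prepending the common restriction arrow $\cI\leftarrow\cI|U$, $\cJ\leftarrow\cJ|U$ to invoke clause (b) of the hypothesis --- is exactly the intended bookkeeping. Nothing further is needed.
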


\begin{rem} 
 \label{R:abig}
Proposition \ref{P:abi} admits the following  simple generalization (see  \cite[1.10]{T2}).

\smallskip

{\it Let $\cI$ and $\cJ$ be equivalent multi-ideals. Suppose  that 
$\cI = {\cI}_0 \leftarrow  \cdots  \leftarrow {\cI}_s$ and 
$\cJ = {\cJ}_0 \leftarrow  \cdots  \leftarrow {\cJ}_s$  are permissible sequences of transformations, both obtained by using the same permissible centers (hence 
$M_i=as(\cI _i)=as(\cJ _i)$, for all $i$). Let  $U$ be any open set in $
M_s$. Then 
${\cI _s} _{|U} \sim {\cJ _s}_{|U} $.}

\smallskip

\end{rem}
\begin{voi}
\label{V:alre}
{\it Algorithmic resolutions}.  A resolution  algorithm for multi-ideals is the assignment, for each non-negative integer $d$, of a totally ordered set $\Lambda^{(d)}$, as well as the assignment, for each nonresolved multi-ideal $\cI$, of functions $g_0, \ldots, g_{r-1} $ (where $r$ depends on $\cI$). In the set $\{g_0, \ldots, g_{r-1}\}$,  
 $g_i$ is upper-semicontinuos with values in $\Lambda ^{(d)}$ (where $d$ is the dimension of $\cI$). The function $g_0$ is defined on $\sg (\cI)$ and 
  the closed set  
 $C_0:=\ma (g_0)$ 
 must be 
a permissible center for $\cI$. If $r > 1$, letting $\cI _1 := {\uT}(\cI _{0}, C_{0})$, 
 the domain  of
$g_1$ is $\sg (\cI _1)$,    and           $C_1:=\ma (g_1)$  must be a permissible center. Recursively, for     $j=0, \ldots, r-1$,   
 $g_j$ has    $\sg (\cI _j)$ as domain,   where $\cI _j = {\uT}(\cI _{j-1}, C_{j-1})$ and $C_{j-1}= \ma (g_{j-1})$ is a permissible center. The resulting permissible sequence 
$\cI \leftarrow \cI _1 \leftarrow \cdots \leftarrow \cI _r$ of multi-ideals must be a resolution, i.e., $\sg (\cI _r) =\emptyset$.

  We are interested in algorithms that also satisfy  the following compatibility conditions:  

(a) {\it Compatibility with open immersions}: if $\cI =(M,W,I,b,E)$ and $U$ is a open set in $M$, then the algorithmic resolution functions of $\cI$ induce those of ${\cI}|U$ (ignoring those arrows in the resolution of ${\cI}|U$ which are  isomorphisms). 

(b) {\it Compatibility  with equivalence}: if $\cI=(M,W,I,b,E)$ and $\cJ=(M,V,J,c,E) $ are  equivalent multi-ideals with $\dim (W)=\dim(V)$, then the algorithmic resolutions functions for $\cI$ and $\cJ$ are the same. 

From now on, a ``resolution algorithm'' will mean one satisfying conditions (a) and (b).

\end{voi}

\begin{rem}
\label{R:boj}
   Note that an algorithmic resolution for multi-ideals implies one for basic objects. Indeed, a permissible transform of a basic object  is again a basic object. 

\smallskip

Conversely, an algorithmic resolution  for basic objects implies one for multi-ideals, a  result which is a consequence of the  following key observation, suggested by O. Villamayor:   
 given a multi-ideal   $\cI=(M,W,(I_1,b_1), \ldots, (I_n,b_n),E)$, there is  a marked ideal or basic object $B_{\cI}$ that is equivalent to $\cI$.
 
This basic object,  called the {\it basic object associated to the multi-ideal $\cI$}, is constructed as follows. 
 Let $N$ be the least common multiple of $b_1, \ldots, b_n$ and  write $N=q_i b_i$, $i=1, \ldots, n$, for suitable integers $q_1, \ldots, q_n$, $J=I_1^{q_1}+ \cdots + I_n ^{q_n}$. Then   $B_{\cI}=(M,W,(J,N),E)$. The pair $(J,N)$ is the sum of the pairs $(I_1,b_1), \ldots, (I_n,b_n)$, in the terminology of \cite[3.3]{BMF};  
in \cite{EVU} this operation of sum is denoted by the symbol  $\odot$.

The verification of the fact that $\cI \sim B_{\cI}$ is straightforward.
.\end{rem}

\begin{voi}
\label{V:ree}
 Multi-pairs seem to be related to Rees algebras. A Rees algebra 
$\cG$ 
over a scheme $W$ is determined by a family of $W$-ideals $(I_i)$, $i=0, 1,  \ldots$, $I_0={\cO}_W$. Namely, 
$\cG = \oplus _i I_i T^i$, a graded subring of ${\cO}_W[T]$ (where $T$ is an indeterminate), it is required that $\cG$ be locally finitely generated (see \cite{EVU}). 
 So, a multi-pair looks like an ``embryonic form'' of a Rees algebra. However, although Rees algebras have a rich structure, very useful in the theory of resolution of singularities working over fields, they do not seem to behave so well when we work over a base ring which is not reduced. For instance, if $A=k[\epsilon]$, with $k$ a field and $\epsilon ^2 =0$, and we consider the inclusion $A \subset A[T]$ ($T$ an indeterminate), then the element 
${\alpha}_n := \epsilon T^n$ is integral over $A$, for all $n$. Indeed, 
$\alpha _n$ is a root of the monic polynomial $X^2 \in A[T]$. From this observation it is easy to produce examples of Rees algebras 
$\cG \subset {\cO}_W[T]$ whose integral closure is not a finite module. Over fields, the integral closure plays an important role in the theory of equivalence of Rees algebras. Often operations with Rees algebras involve taking powers, which, as explained in the introduction, sometimes create difficulties when working with rings with nilpotents. 

We believe that multi-ideals (or a variant thereof) might be a more suitable tool for applications in situations involving non-reduced rings.                
\end{voi}

\section{Some useful tools} 
\label{S:tools}

In this section we discuss some preliminary concepts that we need to construct  an algorithm for resolution of multi-ideals.

\begin{voi}
\label{V:del} 
 {\it The $\Delta$ operation}. Given a coherent sheaf of ideals $I$ over a regular variety $W$ (over a zero characteristic field $k$), it is possible to introduce auxiliary sheaves 
${\Delta}^{(j)}(I)$, $j=0, 1, \ldots$, which play an important role in the theory of resolution of singularities. 
 For any integer $j \ge 0$, 
 ${\Delta}^{(j)}(I)$ has the following  property: if $w$ is a   closed point of 
$ W$ and we choose any regular system of parameters $x_1, \ldots, x_d$ of ${\cO}_{W,w}$, then the stalk
${\Delta}^{(j)}(I)_w$ is the ideal of ${\cO}_{W,w}$ generated by all the elements $f \in I_x$ as well as by their partial derivatives, up to order $j$,  with respect to $x_1,\ldots,x_d$. In particular,   ${{\Delta}^{(0)}(I)}=I$. These sheaves may be globally defined with the aid of suitable 
 Fitting ideals (\ref{V:ap5}, (a)).    Another construction can be found in \cite[6.1]{Cu}. If $(I,b)$ is a $W$-pair, then $\sg(I,b)=V({\Delta}^{(b-1)}(I))$. 
\end{voi}

\begin{voi}
\label{V:ut1} {\it Coefficient multi-ideal}.   Let 
$ \cI =( M,W,(I_1,b_1), \ldots, (I_n,b_n), E        )$ be a nonzero multi-ideal. The multi-ideal
$$(1) \quad  {\mathcal C}(\cI) =( M,W,(I_1,b_1),     ({\Delta}^{(1)}(I_1),b_1-1)    ,  \ldots, ({\Delta}^{(b_1-1)}(I_1),1),    \ldots, $$
$$(I_n,b_n),   ( {\Delta}^{(1)}(I_n),b_n-1)    ,  \ldots,({\Delta}^{(b_n-1)}(I_n) ,1),       E        )$$

\noindent
is called the {\it coefficient multi-ideal} associated to $\cI$. That is, in $ {\mathcal C}(\cI)  $ each pair $(I_i,b_i)$ in  $\cI$ is substituted by the multi-pair 
$(I_i,b_i),   ( {\Delta}^{(1)}(I_i),b_i-1)    ,  \ldots,({\Delta}^{(b_i-1)}(I_i) ,1)$. 

\smallskip

It is easily verified that 
$\sg (J,c)= \sg (\Delta ^{(c-q)}(J), q)$, $q=1, \ldots , c$, a  result that implies the equality  
$\sg(\cI)=\sg(\cC(\cI))$. 

\smallskip

If $Z$ is a  hypersurface of $W$   transversal to $E$, then 
$$(2) \quad  {{\mathcal C}(\cI)}|Z =( M,Z,(I_1|Z,b_1),     ({\Delta}^{(1)}(I_1)|Z,b_1-1)    ,  \ldots, ({\Delta}^{(b_1-1)}(I_1)|Z,1)   \ldots, $$
$$(I_n|Z,b_n),   ( {\Delta}^{(1)}(I_n)|Z,b_n-1)    ,  \ldots,({\Delta}^{(b_n-1)}(I_n)|Z ,1),       E|Z        )$$

\noindent is a  multi-ideal of dimension $d-1$, if $\cI$ had dimension $d$. If ${{\mathcal C}(\cI)}|Z $ is nonzero, we call it the {\it inductive multi-ideal} induced by $\cI$ on $Z$, also  denoted by the symbol
$\cI _Z$.  
In this case we say: {\it the inductive multi-ideal is defined}.

\smallskip

 Given  $\cI$ as above,  the following condition on a hypersurface $Z$ of $W$ insures that 
the inductive multi-ideal ${\cI}_Z$ be defined:

\begin{itemize}
\item[($\iota$)] For each $x \in Z$, there is an index $i$ such that 
${({\Delta}^{(b_i-1)}(I_i)|Z)}_x \not= 0$ and $Z$ is   transversal to $E$.
 \end{itemize}
\end{voi}

\begin{pro}
\label{P:sin}
Let $ \cI =( M,W,(I_1,b_1), \ldots, (I_n,b_n), E )$ be a 
 multi-ideal and $Z$ a regular hypersurface  of $W$ satisfying condition ($\iota$).
   Then, 
$\sg(\cC(\cI) |Z) = \sg(\cI) \cap Z \, .$
\end{pro}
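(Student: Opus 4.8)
The plan is to reduce everything to the single-ideal identity $\sg(\Delta^{(c-q)}(J),q) = \sg(J,c)$ (stated in \ref{V:ut1}) together with its compatibility with restriction to a hypersurface. First I would unwind the definitions. By Definition \ref{D:sing}, $\sg(\cC(\cI)|Z)$ is the intersection, over all pairs appearing in $\cC(\cI)|Z$, of the singular sets of those pairs on $Z$; that is,
$$\sg(\cC(\cI)|Z) = \bigcap_{i=1}^{n}\ \bigcap_{q=1}^{b_i} \sg\bigl(\Delta^{(b_i-q)}(I_i)|Z,\ q\bigr),$$
where the inner $q=b_i$ term is just $\sg(I_i|Z,b_i)$ since $\Delta^{(0)}(I_i)=I_i$. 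On the other side, $\sg(\cI)\cap Z = \bigl(\bigcap_{i=1}^n \sg(I_i,b_i)\bigr)\cap Z = \bigcap_{i=1}^n \sg(I_i|Z,b_i)$, because for a point $x\in Z$ the order of $(I_i)_x$ as an ideal of $\cO_{W,x}$ and the order of $(I_i|Z)_x$ as an ideal of $\cO_{Z,x}$ both compare with the condition $\nu_x(I_i)\ge b_i$ — here I would invoke that $Z$ is a regular hypersurface, so that restricting to $Z$ can only raise the order, giving $\sg(I_i|Z,b_i)\supseteq \sg(I_i,b_i)\cap Z$, and this suffices for one inclusion directly.

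The heart of the argument is then a local, pointwise comparison at each $x\in Z$. I would fix $x$ and show that $x\in\sg(\cC(\cI)|Z)$ if and only if $x\in\sg(I_i,b_i)$ for every $i$. For the nontrivial direction, suppose $x$ lies in every $\sg(\Delta^{(b_i-q)}(I_i)|Z,q)$; I want to conclude $\nu_x(I_i)\ge b_i$ in $\cO_{W,x}$ for each $i$. The key point is that the full collection $\{\Delta^{(j)}(I_i) : 0\le j\le b_i-1\}$ on $W$ already detects $\sg(I_i,b_i)$ via $\sg(I_i,b_i)=V(\Delta^{(b_i-1)}(I_i))$ (stated in \ref{V:del}), and more refinedly $\Delta^{(j)}(I_i)_x$ being generated by all derivatives up to order $j$ means that the order of $\Delta^{(j)}(I_i)_x$ drops by exactly one unit per derivative when $x$ is in the singular locus. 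Restricting to the regular hypersurface $Z$, by the description of $\Delta$ in terms of a regular system of parameters adapted to $Z$ (choose $x_1,\dots,x_{d-1}$ to be a r.s.p.\ of $\cO_{Z,x}$ and $x_d$ an equation of $Z$), one checks $\Delta^{(j)}(I_i)|Z = \Delta^{(j)}(I_i|Z) + (\text{lower-}\Delta\text{ terms from the }x_d\text{-derivatives})$, so that the singular-set condition on $Z$ for the whole ladder $q=1,\dots,b_i$ forces $\nu_x(I_i|Z)\ge b_i$ in $\cO_{Z,x}$, hence $\nu_x(I_i)\ge b_i$ in $\cO_{W,x}$. Running this for all $i$ gives $x\in\sg(\cI)\cap Z$. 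The reverse inclusion $\sg(\cI)\cap Z\subseteq \sg(\cC(\cI)|Z)$ is easier: if $\nu_x(I_i)\ge b_i$, then differentiating at most $b_i-q$ times lowers the order to at least $q$, and this persists after restricting to the regular hypersurface $Z$ (restriction does not decrease order), so $x$ lies in each $\sg(\Delta^{(b_i-q)}(I_i)|Z,q)$.

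The main obstacle I anticipate is the bookkeeping in the hypersurface-restriction step: one must be careful that $\Delta^{(j)}(I_i)|Z$ and $\Delta^{(j)}(I_i|Z)$ are \emph{not} literally equal (the restriction includes derivatives in the transverse direction that are then evaluated on $Z$), and that it is precisely the presence of the \emph{entire} ladder $(\Delta^{(b_i-1)}(I_i)|Z,1),\dots,(I_i|Z,b_i)$ — rather than the single bottom pair — that recovers the correct order condition on $Z$. This is exactly the reason the coefficient multi-ideal bundles all the $\Delta^{(j)}$'s together, and it is why condition ($\iota$) (which guarantees the inductive multi-ideal is nonzero and $Z$ transversal to $E$) is the natural hypothesis. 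Once the adapted-coordinates computation is set up, the equality $\sg(\cI)=\sg(\cC(\cI))$ from \ref{V:ut1} applied on $Z$ essentially finishes the argument; alternatively, and more cleanly, one can first prove $\cC(\cI)|Z \sim \cC(\cI|Z)$ as multi-ideals on $Z$ in an evident sense and then quote $\sg(\cC(\cJ))=\sg(\cJ)$ for $\cJ=\cI|Z$, but the direct pointwise verification above is the most self-contained route.
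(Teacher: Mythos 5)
Your easy inclusion $\sg(\cI)\cap Z\subseteq\sg(\cC(\cI)|Z)$ is essentially the paper's part (b) and is fine: an element of $\Delta^{(b_i-q)}(I_i)$ is obtained from an element of $I_i$, of order $\ge b_i$ at the point, by at most $b_i-q$ differentiations, so it has order $\ge q$, and restriction to the regular hypersurface $Z$ does not decrease order.

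The hard inclusion, however, contains a genuine gap. You assert that the ladder conditions on $Z$ ``force $\nu_x(I_i|Z)\ge b_i$ in $\cO_{Z,x}$, hence $\nu_x(I_i)\ge b_i$ in $\cO_{W,x}$.'' The first clause is just the bottom rung of the ladder and adds nothing; the second clause --- the ``hence'' --- is a false implication: a high order of the restriction $I_i|Z$ on $Z$ never by itself implies a high order of $I_i$ on $W$ (take $I=(x_d)$ and $Z=V(x_d)$: the restriction is zero, of infinite order, while $\nu_x(I)=1$; of course here the higher rungs fail, which is exactly the point). The role of the full ladder $(\Delta^{(j)}(I_i)|Z,\,b_i-j)$ is not to recover $\nu_x(I_i|Z)$ but to control \emph{all} Taylor coefficients of elements of $I_i$ in the direction transverse to $Z$. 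Concretely (this is the paper's part (a)): at a closed point $y\in Z$ choose a regular system of parameters $x_1,\dots,x_{d-1},z$ with $Z=V(z)$, pass to the completion $k'[[x_1,\dots,x_{d-1},z]]$, and expand $g\in I_i$ as $g=\sum_j a_jz^j$ with $a_j\in k'[[x_1,\dots,x_{d-1}]]$. Then $a_j=(1/j!)\,(\partial^jg/\partial z^j)|_{z=0}$ lies in $(\Delta^{(j)}(I_i)|Z)$ (characteristic zero enters here), so the $j$-th rung gives $\nu_y(a_j)\ge b_i-j$ for $j<b_i$, whence every term $a_jz^j$ has order $\ge b_i$ and $\nu_y(g)\ge b_i$. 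This transverse Taylor-coefficient argument is the step your proposal only gestures at (``lower-$\Delta$ terms from the $x_d$-derivatives'') and never carries out; without it the inclusion $\sg(\cC(\cI)|Z)\subseteq\sg(\cI)\cap Z$ is not established. Relatedly, your claimed identity $\sg(\cI)\cap Z=\bigcap_i\sg(I_i|Z,b_i)$ holds only as the inclusion $\subseteq$ in general, for the same reason.
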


\begin{proof} 
 {\it (a) The inclusion $\sg(\cC(\cI) |Z) \subseteq \sg(\cI) \cap Z$}. 
  Let $y \in \sg(\cC(\cI) |Z)$. Then certainly $y \in Z$. Let us show that 
$y \in  \sg(\cI) $. We may assume
 that $y$ is a closed point of $Z$. We have to prove that $\nu _y (I_j) \ge b_j,~ j=1, \ldots, n$. To this end, let $x_1, \ldots, x_d=z$ be a regular system of parameters of ${\cO}_{W,y}$, where $Z$ is defined by $z$ at $y$. We may work on the completion $R$ of ${\cO}_{W,y}$, which is a power series ring 
 $k'[[x_1,\ldots, x_{d-1},z]]$, where  $k'$ is a suitable finite extension of the base field $k$.  
Then the completion $R'$ of ${\cO}_{Z,y}$ is isomorphic to $k'[[x_1, \ldots, x_{n-1}]]$.
 It suffices to show that if 
 $g \in I_j R$, then $\nu _y (g) \ge b_j, ~ j=1, \ldots, n$. Write $g \in I_jR$ as a power series 
 $g=a_0 +a_1z + \cdots + a_{b_j -1}z^{b_j -1}+ \cdots$. Making $z=0$, we obtain $a_0$, which is an element of $(I_j|Z)R'$. So, by assumption,  
 $\nu _y(a_0 ) \ge b_j$. Similarly, taking derivative with respect to $z$ of order $i < b_j$, making $z=0$, and since we work in characteristic  zero, we see that  $a_i \in  
 (  {\Delta ^{b_j -i} I} _j |Z)R'$. Hence if  $i < b_j$, then $\nu_y(a_i) \ge b_j -i$ and  thus $\nu _y (g) \ge b_j$, as needed.

 {\it (b) The inclusion
 $ \sg(\cI) \cap Z     \subseteq     \sg(\cC(\cI) |Z) $}. 
  Let $y \in \sg(\cI) \cap Z$.  We may assume $y$ is a closed point. Using the notation of part $(a)$, since $y \in Z$, we may assume that we have a regular system of parameters $x_1, \ldots, x_d=z$, where $z$ defines $Z$ at $y$. It suffices to show that  working in the completions $R$ of ${\cO}_{W,y}$ and $R'$ of ${\cO}_{Z,y}$, if ${g'} \in ({\Delta}^{i}(I_j)|Z)R'$, then 
 $\nu _y ( g') \ge  b_j - i    $. Moreover, we may assume that $g'=D(h)_{|z=0}$, where 
 $h \in I_j R$ and 
$D= {{\partial}^s}/{\partial {x_{j_1}}\ldots  \partial {x_{j_s}}}$, $s \le i$. 
 Expressing $h$ as a power series $h=a_0 +a_1z + \cdots + a_{b_j -1}z^{b_j -1}+ \ldots$ and   taking the indicated partial  derivatives, it is easy to show, as in part (a),  that   $D(h)= g' + g'_1z + \cdots$, where the order of  $g' _j$ in $R'$ is $\ge b_j -i$, as claimed.
\end{proof}

\begin{cor}
\label{C:equaz}
Let $ \cI =( M,W,(I_1,b_1), \ldots, (I_n,b_n), E )$ be a 
 multi-ideal and   $Z$ a regular hypersurface  of $W$ satisfying Condition ($\iota$) and containing $\sg(\cI)$. Then 
$\sg(\cC(\cI) |Z) = \sg(\cI) $
\end{cor}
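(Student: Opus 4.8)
The plan is to obtain this corollary as an immediate consequence of Proposition \ref{P:sin} together with the hypothesis $\sg(\cI) \subseteq Z$. First I would invoke Proposition \ref{P:sin}: since $Z$ is a regular hypersurface of $W$ satisfying Condition ($\iota$), we have the equality $\sg(\cC(\cI)|Z) = \sg(\cI) \cap Z$. Then the assumption $\sg(\cI) \subseteq Z$ gives $\sg(\cI) \cap Z = \sg(\cI)$, and combining the two equalities yields $\sg(\cC(\cI)|Z) = \sg(\cI)$, which is the claim.

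The only point that deserves a word of care is the implicit assertion that $\cC(\cI)|Z$ is a legitimate multi-ideal, i.e., that restricting to $Z$ does not produce a zero object and that $Z$ is transversal to $E$; but both of these are exactly packaged into Condition ($\iota$) as stated in \ref{V:ut1}, so there is nothing further to check. (Indeed, the passage from $\cC(\cI)|Z$ to the ``inductive multi-ideal'' $\cI_Z$ requires nonvanishing, which is the content of clause ($\iota$); here we only need the singular-set computation, which Proposition \ref{P:sin} supplies regardless.) So no genuine obstacle arises — the corollary is a formal specialization of the proposition, and the proof is one line.

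Concretely, the write-up would read: \emph{By Proposition \ref{P:sin}, $\sg(\cC(\cI)|Z) = \sg(\cI) \cap Z$. Since by hypothesis $\sg(\cI) \subseteq Z$, we have $\sg(\cI) \cap Z = \sg(\cI)$, whence $\sg(\cC(\cI)|Z) = \sg(\cI)$.} If one wished to be slightly more expansive, one could remark why the hypothesis $\sg(\cI)\subseteq Z$ is natural in applications: in the inductive step of the resolution algorithm one chooses $Z$ to be a hypersurface of maximal contact, which by construction contains the singular locus, so this corollary says precisely that passing to the coefficient multi-ideal on such a $Z$ loses no information about where the center must be chosen.
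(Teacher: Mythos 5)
Your proposal is correct and matches the paper's own argument, which simply states that the corollary follows from Proposition \ref{P:sin}: one applies the proposition to get $\sg(\cC(\cI)|Z)=\sg(\cI)\cap Z$ and then uses the hypothesis $\sg(\cI)\subseteq Z$ to replace the intersection by $\sg(\cI)$. No further comment is needed.
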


This corollary follows from \ref{P:sin}. The equality of \ref{C:equaz} remains valid after taking successively permissible transformations. This generalization is Proposition \ref{P:alfin}, but to state it precisely and prove it we need  some preliminary material to be discussed next. We begin by recalling an important result about the operators $\Delta ^{(i)}$. 

\begin{pro}
\label{P:gira}
Let $\uP=(J,c)$ be   a $W$-pair ($W$ a regular variety) and   $C$ a $\uP$-permissible center.  Consider the transform 
${\uP}_1=(J[1],c)$ of $\uP$ with center $C$ and let 
$\cE \subset {\cO}_{W_1}$ denote the $W_1$-ideal defining the exceptional divisor. Then  
$${\cE}^{-i} {\Delta}^{(c-i)}(J){{\cO}_{W_1}} \subseteq {\cE}^{(i)} {\mathrm {and}} 
 {\cE}^{-i} \Delta^{(c-i)}(J){\cO}_{W_1} \subseteq   \Delta ^{(c-i)}(J[1]) \, .$$
\end{pro}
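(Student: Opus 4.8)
The plan is to reduce both inclusions to an explicit local computation on completed local rings of $W_1$, carried out in a chart of the blowing-up.

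\emph{Localization and choice of coordinates.} The operators $\Delta^{(j)}$ (see \ref{V:del}), the formation of the controlled transform, and the blowing-up all commute with completion and with flat local base change, so it suffices to verify both inclusions on $\hat{\cO}_{W_1,w_1}$ for every closed point $w_1$ lying on the exceptional divisor $\cE$; off $\cE$ the morphism $\pi\colon W_1\to W$ is an isomorphism and the inclusions are trivial. Fix such a $w_1$, put $w=\pi(w_1)$, and choose a regular system of parameters $x_1,\dots,x_d$ of $\hat{\cO}_{W,w}$ with $C$ locally equal to $V(x_1,\dots,x_r)$. Passing to the affine chart of the blowing-up where $x_1\neq 0$ (the other charts being symmetric in $x_1,\dots,x_r$) one gets a regular system of parameters $u_1,\dots,u_d$ of $\hat{\cO}_{W_1,w_1}$ with $\cE=(u_1)$, and $\pi$ is given by $x_1=u_1$, $x_j=u_1(u_j+\lambda_j)$ for $2\le j\le r$, and $x_j=u_j$ for $j>r$, with $\lambda_j$ in the residue field of $w_1$.

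\emph{First inclusion.} Since $C\subseteq\sg(J,c)$, the order of $J$ at the generic point $y$ of each component of $C$ is $\ge c$; the elementary estimate $\nu_y(\Delta^{(c-i)}(J))\ge \nu_y(J)-(c-i)$ then gives $\nu_y(\Delta^{(c-i)}(J))\ge i$, and the description of the proper transform in \ref{V:par}(iii) shows $\Delta^{(c-i)}(J)\cO_{W_1}\subseteq\cE^{\,a}$ with $a=\nu_y(\Delta^{(c-i)}(J))\ge i$. Hence $\cE^{-i}\Delta^{(c-i)}(J)\cO_{W_1}$ is a genuine $W_1$-ideal, which is the content of the first displayed inclusion.

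\emph{Reduction of the second inclusion to one derivative.} Set $m=c-i$ and argue by induction on $m$, using $\Delta^{(m)}=\Delta^{(1)}\circ\Delta^{(m-1)}$ and the monotonicity of $\Delta^{(1)}$; the case $m=0$ is the identity $\cE^{-c}J\cO_{W_1}=J[1]$. For $m\ge 1$, apply the inductive hypothesis to $(J,c)$ with $m-1$ derivatives, obtaining $\cE^{-(i+1)}\Delta^{(m-1)}(J)\cO_{W_1}\subseteq\Delta^{(m-1)}(J[1])$; apply $\Delta^{(1)}$ to this inclusion; and apply the one-derivative case to the pair $(\Delta^{(m-1)}(J),\,i+1)$, which is again permissible for $C$ because $\sg(\Delta^{(m-1)}(J),i+1)=V(\Delta^{(i)}\Delta^{(m-1)}(J))=V(\Delta^{(c-1)}(J))=\sg(J,c)$. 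Composing these three steps yields the claim for $m$. It therefore remains to prove $\cE^{-(c-1)}\Delta^{(1)}(J)\cO_{W_1}\subseteq\Delta^{(1)}(J[1])$.

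\emph{The one-derivative case, and the main obstacle.} In the chart of Step 1, $J[1]=\cE^{-c}J\cO_{W_1}$ is generated by the classes $\tilde f=f^{\pi}/u_1^{c}$ for $f\in J$, and near $w_1$ the ideal $\Delta^{(1)}(J[1])$ is generated by the $\tilde f$ together with the $\partial\tilde f/\partial u_l$. Differentiating the relations of Step 1 gives $\partial/\partial x_k=u_1^{-1}\partial/\partial u_k$ for $2\le k\le r$, $\partial/\partial x_k=\partial/\partial u_k$ for $k>r$, and $\partial/\partial x_1=\partial/\partial u_1-u_1^{-1}\sum_{2\le j\le r}(u_j+\lambda_j)\,\partial/\partial u_j$. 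Substituting $f^{\pi}=u_1^{c}\tilde f$, a short computation shows that $(\partial f/\partial x_k)^{\pi}/u_1^{c-1}$ is in each of the three cases an $\hat{\cO}_{W_1,w_1}$-linear combination of $\tilde f$ and the $\partial\tilde f/\partial u_l$ (for $k=1$ the term $c\,\tilde f$ appears, which is where characteristic zero enters), hence lies in $\Delta^{(1)}(J[1])$; likewise $f^{\pi}/u_1^{c-1}=u_1\tilde f\in J[1]$. Since these elements generate $\cE^{-(c-1)}\Delta^{(1)}(J)\cO_{W_1}$, the inclusion follows. The chain-rule computation itself is mechanical; the delicate point is the bookkeeping in Step 1 — ensuring that $\Delta^{(j)}$, defined via a regular system of parameters or intrinsically via Fitting ideals, genuinely commutes with passage to $\hat{\cO}_{W_1,w_1}$ and with the finite residue extension over $w_1$, so that the chart computation legitimately computes the sheaves $\Delta^{(\cdot)}(J)\cO_{W_1}$ and $\Delta^{(\cdot)}(J[1])$, and correctly reading off the monomial form of $\pi$ at an arbitrary closed point of the chart rather than merely at the origin.
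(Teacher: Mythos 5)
Your proof is correct, and it is essentially the standard argument: the paper does not prove Proposition \ref{P:gira} itself but refers to \cite[6.6]{EV}, and the proof there is exactly this reduction to the one-derivative case followed by the chain-rule computation in a chart of the blowing-up (your verification that $C$ remains permissible for $(\Delta^{(m-1)}(J),i+1)$ via $\sg(\Delta^{(m-1)}(J),i+1)=\sg(J,c)$ is the right way to close the induction). The only cosmetic remark is that characteristic zero is not actually needed for the term $c\,\tilde f$, which lies in $J[1]\subseteq\Delta^{(1)}(J[1])$ even if $c$ were not a unit.
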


The above proposition (see a proof in \cite[6.6]{EV})  compares the pairs 
$(\Delta ^{(c-i)}(J),i)$ and 
$(\Delta ^{(c-i)}(J[1]),i)$. Namely,  it says that the ideal of the transform of the first pair (with a permissible center $C$) is contained in the ideal of the second pair.

\smallskip

We also need a basic result on restrictions of pairs whose simple proof we omit.

\begin{pro}
\label{P:restr} 
Let $W$ be a regular variety (in $\cV$),  $(J,c)$ be a $W$-pair, $(J[1],c)$ its transform with a permissible center $C$ (which is a $W_1$-pair, where  $W_1$ is the blowing-up of $W$ with center $C$). Let 
 $Z$ be a regular hypersurface in $W$ containing $\sg (J,c)$ and  $Z_1$ the strict transform of $Z$ to $W_1$ (which can be identified to the blowing-up of $Z$ with center $C$). Let the $Z_1$-pair 
$((J|Z)[1],c)$ be the transform of the $Z$-pair 
 $(J|Z,c)$ with center $C$  
 (with $J|Z$ the restriction of $J$ to $Z$). Then, 
$(J|Z)[1]  =     (J[1]|Z_1)$
\end{pro}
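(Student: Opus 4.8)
The plan is to verify the asserted equality of $Z_1$-ideals locally in suitable affine charts of the blowing-up $W_1\to W$. Since the statement is local on $W_1$ and both sides agree trivially over the open set where $W_1\to W$ is an isomorphism (there $J[1]=J$, $(J|Z)[1]=J|Z$ and $Z_1\cong Z$), it suffices to work near a point $x'$ of the exceptional divisor lying over a point $x\in C$. First I would record the relevant preliminary facts. As $\sg(J,c)\subseteq Z$ forces $C\subseteq Z$ with $C$ and $Z$ both regular, the strict transform $Z_1$ is the blowing-up of $Z$ with center $C$ and is a regular hypersurface of $W_1$ transversal to the exceptional divisor; hence the exceptional ideal $\cE\subset\cO_{W_1}$ restricts on $Z_1$ to the exceptional ideal of $Z_1\to Z$, which is locally principal and generated by a nonzerodivisor on the regular variety $Z_1$. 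I would also note that $C$ is permissible for $(J|Z,c)$, since restriction to a regular hypersurface does not lower the order ($\nu_x(J|Z)\ge\nu_x(J)\ge c$ for $x\in C$), so that $(J|Z)[1]$ is indeed defined.

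Next I would pass to explicit coordinates. Choose a regular system of parameters $z,y_1,\ldots,y_{r-1},w_1,\ldots,w_{d-r}$ of $\cO_{W,x}$ with $Z=V(z)$ and $I_W(C)=(z,y_1,\ldots,y_{r-1})$ (possible because $C\subseteq Z$ is regular). In the affine chart of the blowing-up obtained by dividing by $z$ the strict transform of $Z=V(z)$ is empty, so $x'$ lies in one of the charts where one divides by some $y_i$, say $y_1$. In that chart $\cO_{W_1}$ is generated over $\cO_W$ by $\bar y_i:=y_i/y_1$ $(2\le i\le r-1)$ and $\bar z:=z/y_1$, the exceptional ideal is $\cE=(y_1)$, and $Z_1=V(\bar z)$, with exceptional ideal $(y_1)$ as well.

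Finally I would compute both ideals directly. For $f$ in a set of local generators of $J$ at $x$, its total transform in this chart is $f^{\mathrm{tot}}$, obtained by the substitution $y_i\mapsto y_1\bar y_i$, $z\mapsto y_1\bar z$; since $C\subseteq\sg(J,c)$ one has $f^{\mathrm{tot}}\in(y_1^c)$ and $f[1]=y_1^{-c}f^{\mathrm{tot}}$, and restricting $f[1]$ to $Z_1$ means putting $\bar z=0$, i.e.\ $z=0$. On the other hand $(J|Z)[1]$ is generated on $Z_1$ by the elements $y_1^{-c}(f|_Z)^{\mathrm{tot}}$, where $f|_Z$ is $f$ with $z=0$ and the total transform now uses only $y_i\mapsto y_1\bar y_i$. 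Because the substitution in the variables $y_i$ commutes with setting $z=0$, and because dividing by $y_1^c$ survives restriction to $Z_1$ (as $y_1$ is a nonzerodivisor on $Z_1$), one gets $f[1]|_{Z_1}=(f|_Z)[1]$ for each such $f$; these elements generate $J[1]|_{Z_1}$ and $(J|Z)[1]$ respectively, so the two $Z_1$-ideals coincide. I expect the only genuine obstacle to be the bookkeeping in the middle step: one must know that $Z_1$ avoids the ``$z$-chart'' and meets the exceptional divisor transversally, so that the restriction of a controlled transform is again a controlled transform — and this is exactly what the identification $Z_1=\mathrm{Bl}_C(Z)$ quoted in the statement provides.
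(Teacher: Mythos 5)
Your proof is correct. The paper in fact omits the proof of this proposition entirely (``a basic result on restrictions of pairs whose simple proof we omit''), so there is nothing to compare against; your local computation in the blowing-up charts is the standard argument one would supply, and you handle the two points that actually need checking --- that $Z_1$ misses the $z$-chart and that $\cE|_{Z_1}$ is the exceptional ideal of $Z_1 \to Z$ with $y_1$ a nonzerodivisor on $Z_1$, so that division by $y_1^{c}$ commutes with setting $\bar z=0$.
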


 \begin{voi}
\label{V:asu} Here we describe a situation that will appear later on in the paper. 

Let $\cI=(M,W,(I_1,b_1), \ldots, (I_n,b_n),E) $ be a multi-ideal and consider 
 a permissible sequence of multi-ideals
$$(1) \qquad  {\cI} = {\cI}_0   \leftarrow \cdots \leftarrow \cI _{s} ~,  s>1 \, .    $$ 
We write 
 $ \cI _j = (M_j, W_j, (I_1[j],b_1),\ldots,(I_n[j],b_n), E[j])$, $0 \le j \le s$ and we let $C_j$ denote the $j$-th permissible center used in (1).
  We assume that we have  hypersurfaces 
$Z_j$ of $W_j$ such that      $\sg(\cI _j) \subseteq Z_j $,    $j=0, \ldots, s$ and that, for all $j$,   $Z_j$  is the strict transform of $Z_{j-1}$ (via the morphism $W_{j-1} \leftarrow W_j$ determined by (1)). Moreover we suppose that  $Z:=Z_0$ satisfies  condition ($\iota$) of \ref{V:ut1} and   hence, the inductive multi-ideal 
$\cI _Z$ is defined. We also assume that the sequence (1) induces a permissible sequence 
$$(2) \qquad   {\cI}_Z :=     [{\cI}_Z]_0 \leftarrow \cdots  [{\cI}_Z]_s ~,   $$ 
by using the same centers $C_j$ that we used in (1), in such a way that $\sg(\cI _j)=\sg([{\cI}_Z]_j)$, 
$j=0\ldots, s-1$.

Let us better explain how (1) induces (2).  We  suppose that  
$ \sg(\cI _0)=\sg([{\cI}_Z]_0)  $, so that $C_0$  is a permissible center for both $\cI_0$ and ${[\cI _Z]}_0$; if $\cI _1 = \cT(\cI,C_0)$ and 
 ${[\cI _Z]}_1 = \cT ({[\cI _Z]}_0,C_0)$, then we have $\sg(  {\cI}_1 ) = \sg  ( {[\cI _Z]}_1  )  $, and so on.

 Note that the assumptions above imply that $Z_j$ satisfies condition ($\iota$), $j=0, \ldots, s$.
\end{voi}

\begin{voi}
\label{V:ut2} 
Let us study the pairs of the multi-ideal $[{\cI}_Z]_j$ appearing in the sequence \ref{V:asu}(2).

First, using the notation of \ref{V:asu}, consider the multi-ideal $\cC(\cI)={\cC(\cI)}_0$. Since $\cI$ and $\cC(\cI)$ are equivalent (see, e.g., \cite[3.11]{BMF}), the sequence \ref{V:asu} (1) induces, using the same centers for the transformations, a sequence 
 $ \cC(\cI)={\cC(\cI)}_0 \leftarrow \cdots \leftarrow             {\cC(\cI)}_s$.

Let us denote the pairs of ${\cC(\cI)}_j $ by 
$({[\Delta ^{(b_i-q)}(I_i)]}_j , q)$, $0 \le q < b_i, ~ 1 \le i \le n$. The ideals are inductively constructed as follows:  
$$  {[\Delta ^{(b_i-q)}(I_i)]}_0 ={\Delta ^{(b_i-q)}(I_i)}  ~   {\mathrm   {and} } ~
{[\Delta ^{(b_i-q)}(I_i)]}_{j+1} := {{\cE}_{(j+1)}}^{-q}{[\Delta ^{(b_i-q)}(I_i)]}_j \, ,$$ 
where $\cE _{j+1}$ denotes the $W_{j+1}$-ideal defining the exceptional divisor of the morphism $W_j \leftarrow W_{j+1}$ determined by the sequence \ref{V:asu} (1). Then, by repeatedly 
applying  Proposition \ref{P:restr}, we see that the pairs of $[{\cI}_Z]_j$ are the $Z_j$-pairs 
$$  ({[\Delta ^{(b_i-q)}(I_i)]}_j |Z_j , q)    \,  ,~ 0 \le q < b_i, ~ 1 \le i \le n ~.  $$ 
To simplify, we shall write 
 ${[\Delta ^{(b_i-q)}(I_i)]}_{Z,j} := {[\Delta ^{(b_i-q)}(I_i)]}_j |Z_j $.
 Notice that, by \ref{P:restr}, the pair 
 $({[{\Delta}^{(b_i-q)}(I_i)]}_{Z,j+1},q)$ appearing in ${[\cI]}_{j+1}$, is the controlled transform (\ref{V:par}) of the pair $({[{\Delta}^{(b_i-q)}(I_i)]}_{Z,j},q)$.

\end{voi}

\begin{pro}
\label{P:aninclu}
With the notation of \ref{V:asu} and \ref{V:ut2}, we have:

(a)  an inclusion of $W_j$-ideals 
$${ [\Delta ^{b_i-q}(I_i)]}_j     \subseteq     \Delta ^{b_i-q}(I_i[j]) , ~ 0 \le q < b_i, ~ 1 \le i \le n, ~j=0, \ldots, s\, ,$$

(b) an inclusion of $Z_j$-ideals 
$${ [\Delta ^{b_i-q}(I_i)]}_{Z,j }    \subseteq     \Delta ^{b_i-q}(I_i[j])|Z_j , ~ 0 \le q < b_i, ~ 1 \le i \le n, ~j=0, \ldots, s.$$
\end{pro}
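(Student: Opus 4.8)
The plan is to prove (a) by induction on $j$, and to obtain (b) at once by restriction. Indeed, by the identification recorded in \ref{V:ut2} (itself a consequence of Proposition \ref{P:restr}), the $Z_j$-ideal $[\Delta^{b_i-q}(I_i)]_{Z,j}$ is by definition the restriction $[\Delta^{b_i-q}(I_i)]_j|Z_j$ of the $W_j$-ideal occurring in (a), while $\Delta^{b_i-q}(I_i[j])|Z_j$ is the restriction of $\Delta^{b_i-q}(I_i[j])$; since restriction of ideals along the closed immersion $Z_j\hookrightarrow W_j$ is inclusion-preserving, (b) follows immediately from (a). So the whole content lies in (a).

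For (a), the case $j=0$ is the equality $[\Delta^{b_i-q}(I_i)]_0=\Delta^{b_i-q}(I_i)=\Delta^{b_i-q}(I_i[0])$, which is just the definitions of \ref{V:ut2} together with $I_i[0]=I_i$. For the inductive step, assume $[\Delta^{b_i-q}(I_i)]_j\subseteq\Delta^{b_i-q}(I_i[j])$ as $W_j$-ideals. First I would pass to $W_{j+1}$ and apply the controlled-transform operation $\cE_{(j+1)}^{-q}(-)$. As $C_j$ is $\cI_j$-permissible it is in particular permissible for the coefficient pair $([\Delta^{b_i-q}(I_i)]_j,q)$ --- recall from \ref{V:ut2}, \ref{V:m3} that $\cI$ and $\cC(\cI)$ are equivalent, so the same centers $C_j$ remain permissible for the coefficient pairs --- hence $[\Delta^{b_i-q}(I_i)]_j\cO_{W_{j+1}}$ is divisible by $\cE_{(j+1)}^{q}$ and $\cE_{(j+1)}^{-q}(-)$ is well defined and inclusion-preserving on such ideals. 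This gives
$$[\Delta^{b_i-q}(I_i)]_{j+1}=\cE_{(j+1)}^{-q}\,[\Delta^{b_i-q}(I_i)]_j\,\cO_{W_{j+1}}\ \subseteq\ \cE_{(j+1)}^{-q}\,\Delta^{b_i-q}(I_i[j])\,\cO_{W_{j+1}}\,.$$
Then I would invoke Proposition \ref{P:gira} for the $W_j$-pair $(J,c)=(I_i[j],b_i)$, with the permissible center $C_j$ and with the integer called ``$i$'' there taken equal to $q$; its second inclusion reads
$$\cE_{(j+1)}^{-q}\,\Delta^{(b_i-q)}(I_i[j])\,\cO_{W_{j+1}}\ \subseteq\ \Delta^{(b_i-q)}\big((I_i[j])[1]\big)\,,$$
and $(I_i[j])[1]=I_i[j+1]$ because the controlled transforms (all taken with exponent $b_i$) compose along the permissible sequence \ref{V:asu}(1). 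Concatenating the two displays yields $[\Delta^{b_i-q}(I_i)]_{j+1}\subseteq\Delta^{(b_i-q)}(I_i[j+1])$, which closes the induction.

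I do not expect a real obstacle: the proof is bookkeeping built on Proposition \ref{P:gira}. The two places demanding care are the well-definedness and monotonicity of the controlled-transform operation $\cE_{(j+1)}^{-q}(-)$ --- exactly where one uses that the centers $C_j$ are permissible for the coefficient pairs --- and the correct matching of indices: the integer fed to Proposition \ref{P:gira} is $q$, the weight of the coefficient pair, not $b_i-q$, and one must keep the controlled-transform conventions consistent along the sequence so that $(I_i[j])[1]=I_i[j+1]$.
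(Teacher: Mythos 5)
Your proof is correct and is essentially the paper's own argument: induction on $j$, with the base case being definitional, the inductive step obtained by applying $\cE^{-q}(-)$ to the inductive inclusion and then invoking Proposition \ref{P:gira} for the pair $(I_i[j],b_i)$ with the integer there set to $q$, and part (b) deduced from (a) by restriction to $Z_j$. The extra care you take about permissibility of the centers for the coefficient pairs and about index matching is sound bookkeeping that the paper leaves implicit.
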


\begin{proof}
 Part (a) is seen by induction on $j$. The inclusion is clearly true if $j=0$. Assuming that (a) is 
valid for the index $j$, let $\cE $ denote the ${\cO}_{W_{j+1}}$-ideal defining  the exceptional divisor of the  
 blowing-up morphism $W_j \leftarrow W_{j+1}$, induced by the sequence 2.7 (1). Then we have 
 $${[{\Delta}^{b_i-q}(I_i)]}_{j+1} = {\cE}^{-q}   { [\Delta ^{b_i-q}(I_i)]}_j \subseteq     {\cE}^{-q}   ({ \Delta ^{b_i-q}(I_i [j])}) \subseteq  
           { \Delta ^{b_i-q}(I_i [j+1])} ,         $$
where the first inclusion is valid by the inductive hypothesis and the second one by \ref{P:gira}. So, (a) is proved for the index $j+1$.

Part (b) follows from (a) by restricting to $Z_j$.
\end{proof}

\begin{pro}
\label{P:despe}
With the assumptions and notation of \ref{V:asu} and \ref{V:ut2}, let $y$ be a closed point of $\sg(\cI _s)$. We write  $R=\widehat{{\cO}_{{W_s},y}}$ and 
$ {\bar R}=\widehat{{\cO}_{{Z_s},y}}$. Then, there is a regular system of parameters $x_1, \ldots, x_{d-1}, x_d$ of ${\cO}_{W_s,y}$ with the following properties: 

$(\alpha)$  Near $y$, $Z_s$ is defined by $z:=x_d=0$ (i.e.,  $I_{W_s}(Z_s)_y = (z){\cO}_{W_s,y}$) 

$(\beta)$ For $i=1, \ldots, n$, there are generators  $f^{(i)}_1,  \ldots, f^{(i)}_{m_i}$ of the ideal $(I_i[s])_y \subset {\cO}_{W_s,y}$ such that, as a power series in the completion $R=k[[x_1,\ldots,x_{d-1},z]] of {\cO}_{W_s,y}$,  we have $f^{(i)}_q = a_0 + a_1 z + \cdots + a_{{b_i}-1} z^{{b_i}-1} + \cdots $, $q=1, \ldots, m$, with each coefficient $a_j \in {\bar R}=k[[x_1, \ldots, x_{d-1}]]$ and, moreover, $a_j \in 
{(\Delta ^{(j)}(I_i[s])|Z_s)}{\bar R}$, 
 $0 \le j < b_i $.  (Each coefficient $a_j$ depends on $q$,  $k$  is a suitable field).
\end{pro}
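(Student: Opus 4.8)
The idea is that this is a "completed local" refinement of what has already been established: Corollary \ref{C:equaz} tells us that $\sg(\cC(\cI)|Z) = \sg(\cI)$ in the original situation, and Proposition \ref{P:sin} shows how the coefficient ideals on $Z$ control, coefficient by coefficient, the orders of power-series expansions of elements of the $I_i$ along $Z$. What Proposition \ref{P:despe} asks for is to carry this out uniformly for a chosen generating set of $(I_i[s])_y$ after a permissible sequence, and the essential input that makes this possible after transformations is Proposition \ref{P:aninclu}(b), together with the fact (recorded in \ref{V:ut2}) that the pairs of $[\cI_Z]_j$ are precisely the controlled transforms of the coefficient pairs, so that $[\Delta^{(b_i-q)}(I_i)]_{Z,j} \subseteq \Delta^{(b_i-q)}(I_i[s])|Z_s$ on $Z_s$.

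First I would set up coordinates. Since $y \in \sg(\cI_s) \subseteq Z_s$ and $Z_s$ is a regular hypersurface of the regular variety $W_s$, there is a regular system of parameters $x_1,\dots,x_{d-1},x_d$ of $\cO_{W_s,y}$ with $I_{W_s}(Z_s)_y = (x_d)$; set $z := x_d$. Passing to the completion, $R = k'[[x_1,\dots,x_{d-1},z]]$ for a suitable field $k'$ (a finite extension of $k$, as in the proof of \ref{P:sin}), and $\bar R = \widehat{\cO_{Z_s,y}} = k'[[x_1,\dots,x_{d-1}]]$, obtained by setting $z = 0$. This gives $(\alpha)$. Next pick any finite generating set $f_1^{(i)},\dots,f_{m_i}^{(i)}$ of $(I_i[s])_y$ and expand each as a power series $f_q^{(i)} = \sum_{j \ge 0} a_j z^j$ with $a_j \in \bar R$ (each $a_j$ depending on $i$ and $q$).

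The heart of the matter is the claim in $(\beta)$ that $a_j \in (\Delta^{(j)}(I_i[s])|Z_s)\bar R$ for $0 \le j < b_i$. This is the same differentiation-and-restriction argument as in part (a) of the proof of Proposition \ref{P:sin}: since we are in characteristic zero, $a_j = \tfrac{1}{j!}\,\big(\partial^j f_q^{(i)}/\partial z^j\big)\big|_{z=0}$, and $\partial^j/\partial z^j$ applied to an element of $I_i[s]$ lands in $\Delta^{(j)}(I_i[s])$ (by the defining property of the $\Delta$ operators recalled in \ref{V:del}), so restricting to $z = 0$ puts $a_j$ into $\Delta^{(j)}(I_i[s])|Z_s$, extended to $\bar R$. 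Strictly this uses that $\Delta^{(j)}(I_i[s])$, computed with respect to the chosen regular system of parameters, contains all $z$-derivatives up to order $j$ of every element of $I_i[s]$ — which is exactly the stalk description in \ref{V:del}. No appeal to Proposition \ref{P:aninclu} is actually needed for this local statement; that proposition is the globally-consistent version used elsewhere, whereas here we work point by point in $R$.

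The only point that requires a word of care — and the step I would flag as the main obstacle — is that the statement implicitly wants the $a_j$ to be controlled by the coefficient ideals of $\cI_Z$ at $y$, i.e. that the "$\Delta^{(j)}(I_i[s])|Z_s$" appearing here really is the $j$-th coefficient ideal that sits in the pair structure of $[\cI_Z]_s$. This is where \ref{V:ut2} and Proposition \ref{P:aninclu}(b) enter: one must check that taking the transform commutes appropriately with forming $\Delta$ and restricting to $Z$, so that the ideal named $\Delta^{(j)}(I_i[s])|Z_s$ is well-defined independently of the bookkeeping and agrees (up to the controlled-transform correction recorded in \ref{V:ut2}) with $[\Delta^{(b_i-j)}(I_i)]_{Z,s}$. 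Once that identification is in place the proof is just the coordinate choice plus the characteristic-zero Taylor-coefficient computation above. I would therefore organize the write-up as: (1) choose the coordinates, giving $(\alpha)$; (2) expand generators and compute $a_j$ via $z$-derivatives, using \ref{V:del} to land in $\Delta^{(j)}(I_i[s])$; (3) restrict to $z=0$ to conclude $a_j \in (\Delta^{(j)}(I_i[s])|Z_s)\bar R$, invoking \ref{V:ut2}/\ref{P:aninclu} to match this with the coefficient-ideal structure of $[\cI_Z]_s$.
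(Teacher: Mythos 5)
Your differentiation argument does establish the containment exactly as printed: for \emph{any} $f\in (I_i[s])_y$ with expansion $f=\sum a_jz^j$, the Taylor formula $a_j=\tfrac{1}{j!}\,\partial_z^jf|_{z=0}$ together with the stalk description of $\Delta^{(j)}$ in \ref{V:del} puts $a_j$ into $(\Delta^{(j)}(I_i[s])|Z_s)\bar R$. But this is not the paper's proof, and the discrepancy matters. The paper proves \ref{P:despe} by induction on $s$ (following Cutkosky, Thm.~6.24, Part 2), and the reason it must is visible in how the proposition is used in \ref{P:alfin}: there one starts from $y\in\sg([\cI_Z]_s)$, which only bounds the orders of the ideals $[\Delta^{(j)}(I_i)]_{Z,s}$ --- the $s$-fold \emph{controlled transforms} of the coefficient ideals, i.e.\ the ideals actually appearing as pairs of $[\cI_Z]_s$ per \ref{V:ut2} --- and concludes $\nu_y(a_j)\ge b_i-j$. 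For that step one needs $a_j\in[\Delta^{(j)}(I_i)]_{Z,s}\bar R$, which is a strictly stronger assertion than the one you prove: by \ref{P:aninclu}(b) the inclusion is $[\Delta^{(j)}(I_i)]_{Z,s}\subseteq\Delta^{(j)}(I_i[s])|Z_s$, so membership in the larger ideal $\Delta^{(j)}(I_i[s])|Z_s$ gives no lower bound on $\nu_y(a_j)$. Your closing paragraph senses this obstacle but proposes to resolve it by ``invoking \ref{V:ut2}/\ref{P:aninclu} to match'' the two ideals --- that invocation goes in the wrong direction and cannot upgrade your conclusion.

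The stronger statement genuinely requires induction on $s$ and cannot be seen by differentiating at time $s$ alone, because $\Delta^{(j)}(I_i[s])$ forgets the history of the transformation while $[\Delta^{(j)}(I_i)]_{Z,s}$ remembers it. The inductive step is: if $f=\sum a_jz^j$ is one of the chosen generators at stage $s$ with $a_j$ in the $j$-th transformed coefficient ideal, take as the corresponding generator at stage $s+1$ the element $g=u^{-b_i}(f\circ\pi)$, where $u$ defines the exceptional divisor and $z'=u^{-1}z$ defines the strict transform of $Z_s$; then $g=\sum a_j'(z')^j$ with $a_j'=u^{-(b_i-j)}(a_j\circ\pi)$, which is precisely the controlled transform of $a_j$ for the pair $(\cdot,b_i-j)$, landing it in $[\Delta^{(j)}(I_i)]_{Z,s+1}$. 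This is also why the proposition asserts the \emph{existence} of suitable generators rather than a property of arbitrary ones. So: your argument is correct for the literal text but proves only the weak form of the proposition, which does not support its application in \ref{P:alfin}; the paper's inductive route is needed for the intended content.
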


The proof, by induction on $s$, is a simple variation of the demonstration of Theorem 6.24, Part 2, of \cite{Cu}.

\smallskip

Next,  using the notation of \ref{V:asu}, we  present the mentioned generalization of Corollary \ref{C:equaz}. 

\smallskip

\begin{pro}
\label{P:alfin}
Assume we are in the situation of \ref{V:asu}. Then,
$$(1) \qquad  \sg(\cI _s)= \sg ({[{\cI}_Z]}_s) \, .  $$
\end{pro}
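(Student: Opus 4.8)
The plan is to prove the two inclusions of~(1) separately, reducing both to the already-established Proposition~\ref{P:sin} (and its corollary) by working at the level of the transformed multi-ideals $\cI_s$ and $[\cI_Z]_s$, and exploiting the inclusions of Proposition~\ref{P:aninclu} together with the fine description of generators in Proposition~\ref{P:despe}.

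\textbf{Reduction and the first inclusion.} First I would observe that, by the construction of~\ref{V:asu}(2), the multi-ideal $[\cI_Z]_s$ is built from the pairs $({[\Delta^{(b_i-q)}(I_i)]}_{Z,s},q)$, which by~\ref{V:ut2} and Proposition~\ref{P:restr} are exactly the successive controlled transforms of the pairs of $\cC(\cI)|Z$. For the inclusion $\sg([\cI_Z]_s)\subseteq\sg(\cI_s)$: take $y\in\sg([\cI_Z]_s)$; then $y\in Z_s$ and, for each $i$ and each $q<b_i$, the stalk of ${[\Delta^{(b_i-q)}(I_i)]}_{Z,s}$ at $y$ lies in $M^q$ (here $M$ is the maximal ideal of ${\cO}_{Z_s,y}$). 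Passing to the completions $R$, $\bar R$ and using a regular system of parameters as in~\ref{P:despe}$(\alpha)$, I would run the Taylor-expansion argument of Proposition~\ref{P:sin}(a) verbatim: writing a generator $f$ of $(I_i[s])_y$ as $\sum a_\ell z^\ell$, the $\ell$-th coefficient $a_\ell$ is (up to a unit from characteristic zero) a derivative of $f$ restricted to $z=0$, hence lies in $(\Delta^{(b_i-\ell)}(I_i[s])|Z_s)\bar R$ for $\ell<b_i$; but that ideal \emph{contains} ${[\Delta^{(b_i-\ell)}(I_i)]}_{Z,s}$ only in the wrong direction, so instead I use that $a_\ell$ is a derivative of $f$ and therefore lies in $\Delta^{(b_i-\ell)}(I_i[s])|Z_s$ directly, and conclude $\nu_y(a_\ell)\ge b_i-\ell$ from $y\in\sg(\cC(\cI_s)|Z_s)$ --- which is implied by $y\in\sg([\cI_Z]_s)$ via Proposition~\ref{P:aninclu}(b). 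Hence $\nu_y(f)\ge b_i$ for all generators and all $i$, i.e.\ $y\in\sg(\cI_s)$.

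\textbf{The second inclusion.} For $\sg(\cI_s)\subseteq\sg([\cI_Z]_s)$, take $y\in\sg(\cI_s)$, a closed point. This is exactly the hypothesis of Proposition~\ref{P:despe}, which hands me a regular system of parameters $x_1,\dots,x_{d-1},z$ and generators $f^{(i)}_q$ of $(I_i[s])_y$ whose coefficients $a_j$ (as power series in $z$ over $\bar R$) satisfy $a_j\in(\Delta^{(j)}(I_i[s])|Z_s)\bar R$ for $0\le j<b_i$. I then need to show $y\in\sg({[\Delta^{(b_i-q)}(I_i)]}_{Z,s},q)$ for every $i$ and $q$, i.e.\ that $\nu_y$ of ${[\Delta^{(b_i-q)}(I_i)]}_{Z,s}$ is $\ge q$. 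Here is the subtle point: Proposition~\ref{P:aninclu}(b) gives inclusions the \emph{wrong way} for this direction, so I cannot simply invoke it. Instead I would argue that $a_0=f^{(i)}_q|_{z=0}$ generates (as $q$ ranges) the ideal $(I_i[s]|Z_s)_y={[\Delta^{(b_i)}(I_i)]}_{Z,s}$-part... more carefully: the controlled-transform formula of~\ref{V:ut2} shows ${[\Delta^{(b_i-q)}(I_i)]}_{Z,s}=\cE_{(s)}^{-q}\cdots\cE_{(1)}^{-q}\,(\Delta^{(b_i-q)}(I_i)|Z)\,{\cO}_{Z_s}$, and by Corollary~\ref{C:equaz} applied to the \emph{original} object and then transported along the permissible sequence~(2) --- which is legitimate precisely because~\ref{V:asu} assumes $\sg(\cI_j)=\sg([\cI_Z]_j)$ for $j<s$, so the centers $C_j$ are permissible for $\cC(\cI)_j|Z_j$ as well --- the vanishing condition $y\in\sg(\cI_s)$ forces $\nu_y$ of $a_j$ to be $\ge b_i-j$; unwinding, $\nu_y({[\Delta^{(b_i-q)}(I_i)]}_{Z,s})\ge q$. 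Thus $y\in\sg([\cI_Z]_s)$.

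\textbf{Main obstacle.} The delicate step is the second inclusion: Proposition~\ref{P:aninclu} only provides containments in the direction useful for the first inclusion, so for the reverse I must genuinely use the \emph{explicit} generators of Proposition~\ref{P:despe} (which is why that proposition was proved first) and track how the controlled-transform exceptional-divisor factors $\cE_{(j)}$ interact with the coefficients $a_j$ of the power-series expansion. Concretely, one has to verify that the order of $a_j$ at $y$ picks up exactly the expected drop under each blow-up --- this is where the hypothesis in~\ref{V:asu} that each $Z_j$ is the strict transform of $Z_{j-1}$ and that $\sg(\cI_j)\subseteq Z_j$ at every stage is used, guaranteeing that no generic point of a center escapes $Z_j$ and that the restriction-commutes-with-transform identity of Proposition~\ref{P:restr} applies at every step. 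Once the power-series bookkeeping is in place, both inclusions close, giving~(1).
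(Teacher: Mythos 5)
You have assembled the right ingredients (Corollary \ref{C:equaz}, Proposition \ref{P:aninclu}, Proposition \ref{P:despe}) but attached them to the wrong inclusions, and one direction of your argument is circular. Recall that an inclusion of ideals $I\subseteq J$ gives $\nu_x(I)\ge\nu_x(J)$, hence $\sg(J,q)\subseteq\sg(I,q)$: the \emph{smaller} ideal has the \emph{larger} singular set. Proposition \ref{P:aninclu}(b) says the ideals of $[\cI_Z]_s$ are contained in those of $\cC(\cI_s)|Z_s$, so it yields $\sg(\cC(\cI_s)|Z_s)\subseteq\sg([\cI_Z]_s)$; combined with $\sg(\cC(\cI_s)|Z_s)=\sg(\cI_s)$ from Corollary \ref{C:equaz} (applicable because $\sg(\cI_s)\subseteq Z_s$ and $Z_s$ satisfies $(\iota)$), this \emph{immediately} gives $\sg(\cI_s)\subseteq\sg([\cI_Z]_s)$ --- precisely the inclusion for which you declare \ref{P:aninclu} useless and then try to reprove by a vague ``transport'' of \ref{C:equaz} along the sequence. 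Conversely, in your argument for $\sg([\cI_Z]_s)\subseteq\sg(\cI_s)$ you assert that $y\in\sg([\cI_Z]_s)$ implies $y\in\sg(\cC(\cI_s)|Z_s)$ ``via \ref{P:aninclu}(b)''; that is the reverse of what \ref{P:aninclu}(b) gives, and (modulo \ref{C:equaz}) it is equivalent to the very inclusion you are proving, so the step is circular.

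The inclusion $\sg([\cI_Z]_s)\subseteq\sg(\cI_s)$ is the one that genuinely needs Proposition \ref{P:despe}: for a closed point $y\in\sg([\cI_Z]_s)$ one expands the generators $f^{(i)}_q=\sum_t a_tz^t$ of $(I_i[s])_y$, whose coefficients $a_t$ for $t<b_i$ lie in the \emph{transformed} ideals ${[\Delta^{(t)}(I_i)]}_{Z,s}$ (this is how \ref{P:despe} is exploited in the paper's proof). Then $y\in\sg([\cI_Z]_s)$ bounds $\nu_y(a_t)\ge b_i-t$ \emph{directly}, with no detour through $\cC(\cI_s)|Z_s$, and every term $a_tz^t$ has order $\ge b_i$, whence $\nu_y(I_i[s])\ge b_i$. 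Your second-inclusion argument also fails on its own terms: bounding the orders of the particular coefficients $a_j$ of generators of $I_i[s]$ does not bound $\nu_y$ of the full ideal ${[\Delta^{(b_i-q)}(I_i)]}_{Z,s}$ from below, so ``unwinding'' does not close that direction either. The repair is simply to swap the tools: use \ref{P:aninclu}(b) together with \ref{C:equaz} for $\sg(\cI_s)\subseteq\sg([\cI_Z]_s)$, and reserve \ref{P:despe} for the reverse inclusion.
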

\begin{proof}
The inclusion ${[{\Delta}^{b_i-q}(I_i)]}_{Z,s} \subseteq {{\Delta}^{b_i-q}(I_i[s])}|Z$ (\ref{P:aninclu}) implies an inclusion of singular loci of pairs:
$$ \sg({{\Delta}^{b_i-q}(I_i[s])}|Z,q)\subseteq \sg (   {[{\Delta}^{b_i-q}(I_i)]}_{Z,s} ,q),$$
which, by \ref{V:ut2}, implies:
$$  \sg(\cC (\cI _s) |Z  ) \subseteq \sg ({[\cI _Z]}_s)    \, .  $$
By \ref{C:equaz} and \ref{V:ut1}, $\sg ({\cC(\cI)}|Z  )   = \sg( \cC (\cI _s )) = \sg ({\cI} _s    )$. Hence, we obtain an inclusion 
$$    \sg({\cI}_s) \subseteq \sg(  {[\cI _Z]}_s        ).      $$
To prove the opposite inclusion, it suffices to work with closed points. So, let $y \in \sg(  {[\cI _Z]}_s )$ be a close point. We want to see that 
$\nu _y ({ I_i}[s]   ) \ge b_i $, $i=1, \ldots, n$. It suffices to show that $\nu _y ( f^{(i)}_j   ) \ge b_i$  for each of the generators of \ref{P:despe}. Now, working in the completion $R=k[[x_1, \ldots, x_d]]$ of $\cO_{W_s,y}$ (see \ref{P:despe}) we have:  $      f^{(i)}_j  =\Sigma \, a_t z^t$ (with $z=x_d$),   where 
 $a_t \in {\Delta ^{(t)}(I_i)} k[[x_1,\ldots,x_{d-1}]]$,
 $t=0, \ldots, b_i - 1$. Since $y \in \sg({[\cI]}_s)$,  $\nu _y (a_t) \ge b_i -t$. Thus each term of the series has order $\ge b_i$, and so $\nu _y (f^{(i)}_j) \ge b_i$, as needed.
\end{proof}

Next we shall study conditions under which the situation of \ref{V:asu} is reached. 

\begin{voi}
\label{V:ut3}
A basic object $B=(M,W,(I,b),E)$ is said to be {\it nice} if it is nonzero and there is a regular hypersurface
$Z$ of $W$ such that:

(a) ${I_W}(Z) \subseteq {\Delta ^{(b-1)}}(I)$

(b) $Z$ is transversal to $E$ (see \ref{V:m2} (a)).

Such a hypersurface   $Z$ is called, following \cite{T1},  an {\it adapted hypersurface} for $B$ or an $B$-{\it adapted hypersurface}. 

If $Z$ is $B$-adapted,  then at each point $y \in Z$ the stalk ${I(Z)}_y$ is generated by an order one element of ${{\Delta ^{b-1}}(I)}_y$.   Moreover, 
${\nu _y}(I)= b$ for all $y$ in $\sg(I,b)$. An $W$-pair with this property is called {\it of maximal order} or {\it good} or {\it  simple} (see \ref{V:par}). 

A  nonzero basic objectal $B$ is {\it locally nice} if for all $x \in W$ there is an open neighborhood
$U$ of $x$ (in $M$) such that the restriction ${B}|U$ is nice.

A nonzero multi-ideal 
$\cI =( M,W,(I_1,b_1), \ldots, (I_n,b_n), E )$ is said to be  $i$-{\it nice} (resp. $i$-{\it locally nice}),  
$i \in \{ 1, \ldots,n\}$, 
 if the associated marked ideal 
$( M,W,(I_i,b_i), E )$ is nice (resp. locally nice). A hypersurface $Z$ is {\it $i$-adapted} for $\cI$ if it is adapted for $( M,W,(I_i,b_i), E )$, and $Z$  is {\it adapted} if is $i$-adapted for some index $i$. The multi-ideal $\cI$ is nice if there is an index $i$ such that $\cI$ is $i$-nice; it is locally nice if each point of $M$ has a neighborhood $U$ such that the restriction of $\cI$ to $U$ is nice.

If $\cI$ is a  nice multi-ideal of dimension $d$ and the dimension of $\sg(\cI)$ is $< d-1$, then any adapted hypersurface necessarily satisfies condition $(\iota)$ of \ref{V:ut1}. Hence the inductive multi-ideal ${\cI}_Z$ is defined. 
\end{voi}

\begin{rem}
\label{R:getasu}
 We assume that  $\cI$ (as in \ref{V:ut3}) is a $v$-nice  multi-ideal, of dimension $d$, where 
 $Z$ is a $v$-adapted hypersurface, for some index $v \in \{  1, \ldots, n \}$. We also assume that  
$$(1) \qquad \dim \sg (\cI) < d-1 ~.$$
Then any permissible sequence  \ref{V:asu} (1)  leads to a situation where  the conditions described in  \ref{V:asu} are satisfied. This is obtained by taking, in \ref{V:asu}, $Z_0=Z$ and  $Z_j$ the strict transform of $Z_{j-1}$,  for all $j$. 

In other words, take a permissible transformation
$$\cI \leftarrow {\cI}_1 =    ( M_1,W_1,(I_1[1],b_1), \ldots, (I_n[1],b_n), E ) \, ,        $$ 
with center $C=C_0 \subseteq \sg(\cI)$  and  consider    the strict transform  $Z_1$  of $Z$ to $W_1$.   By our assumption (1), the inductive multi-ideal $\cI _Z$ is defined and, by Corollary \ref{C:equaz}, $C$ is an ${\cI}_Z$-center. Let ${\cI}_Z := {[\cI]_Z}_0 \leftarrow  {[\cI _Z]}_1$ be the corresponding permissible transformation.    Notice that  $Z_1$ is again $v$-adapted for ${\cI}_1$. Indeed, $C$ is permissible  for the 
$W$-pair $(I_v,b_v)$, because $\sg(\cI) \subseteq \sg(I_j,b_j)$ for all $j$. Since $Z$ is $v$-adapted and
$ {I_W}(Z) \subseteq {\Delta}^{b_v-1}(I_v)$, we have  
$${I_{W_1}}(Z_1)= {\cE}^{-1} {I_W(Z)}{\cO}_{W_1}\subseteq {\cE}^{-1}{\Delta}^{b_v-1}(I_v) \subseteq {\Delta}^{b_v-1}(I_v[1]),$$
where $\cE$ is the ideal defining the exceptional divisor and the last inclusion is obtained by using \ref{P:gira}. Thus (a) of the definition of $v$-adapted hypersurface is fulfilled. Part (b) of that definition follows from the fact that $C$ has normal crossings with  $E$. Now, by using Corollary \ref{C:equaz} and Proposition \ref{P:alfin} (for $s=1$), we see that 
$\sg ({\cI}_1) = \sg ({[\cI _Z]}_1)$.

Similarly, repeated applications of \ref{C:equaz}, \ref{P:gira} and \ref{P:alfin} show that  the requirements of \ref{V:asu} are satisfied 
if we take the hypersurfaces  $Z_j$ as indicated above. 
\end{rem}
 
\begin{pro}
 \label{P:heq} Consider a permissible sequence of multi-ideals 
$\cI _0 \leftarrow \cdots \leftarrow \cI _s$, where $\cI _s$ is nice, with adapted hypersurface $Z$. Assume  $\sg(\cI)$ has codimension $>1$ and let 
${(\cI _s)}_Z$ be the corresponding inductive (nonzero) multi-ideal. Then $\cI _s$ and  ${(\cI _s)}_Z$  are equivalent.
\end{pro}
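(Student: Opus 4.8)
The plan is to prove the two defining conditions of equivalence (see \ref{V:eqiv}) for the pair $\cI_s$ and $(\cI_s)_Z$, reducing everything to the key stability result \ref{P:alfin} together with the machinery of \ref{V:ut1}--\ref{V:ut3}. First I would observe that, since $\cI_s$ is nice with adapted hypersurface $Z$ and $\sg(\cI_s)$ has codimension $>1$ (so in particular $\dim\sg(\cI_s)<d-1$), the remark following \ref{V:ut3} and Corollary \ref{C:equaz} already give $\sg((\cI_s)_Z)=\sg(\cC(\cI_s)|Z)=\sg(\cI_s)$; this disposes of condition (a). For condition (b) I would take a local sequence starting from $\cI_s$ (a mixture of permissible transformations, open restrictions and extensions) and the parallel local sequence starting from $(\cI_s)_Z$ built with the same data, and show the singular sets agree at every stage.

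The core case is a permissible sequence of transformations. Here the idea is exactly the content of Remark \ref{R:getasu}: choosing $Z_0=Z$ and $Z_j$ the strict transform of $Z_{j-1}$, the hypotheses of \ref{V:asu} are met, because (i) each $Z_j$ remains $v$-adapted for $\cI_j$ by the computation ${I_{W_j}}(Z_j)={\cE}^{-1}{I_{W_{j-1}}}(Z_{j-1}){\cO}_{W_j}\subseteq{\cE}^{-1}{\Delta}^{(b_v-1)}(I_v[j-1])\subseteq{\Delta}^{(b_v-1)}(I_v[j])$ using \ref{P:gira}, (ii) $Z_j$ keeps normal crossings with $E[j]$ since the centers do, and (iii) the inductive multi-ideal stays defined because the codimension hypothesis is preserved under blowing up permissible centers contained in $\sg$. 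Then \ref{P:alfin} gives $\sg(\cI_s')=\sg(((\cI_s)_Z)')$ at the end of the sequence, and applying it at each intermediate stage (the sequence $\cI_s\leftarrow\cdots$ truncated) gives the equality at every step, which is exactly condition (b) for the transformation part.

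It remains to handle the two other local operations. For an open restriction to $U\subseteq M_s$, one notes that ${(\cI_s)_Z}|U$ is the inductive multi-ideal of $\cI_s|U$ on $Z_s\cap U$, since the operators $\Delta^{(j)}$, the formation of coefficient multi-ideals, and restriction to a hypersurface all commute with restriction to an open set; hence by Corollary \ref{C:equaz} (applied to $\cI_s|U$, whose singular set is still contained in $Z\cap U$) the singular sets match. For an extension $M_s\times_k\A_k^1$, the same commutation holds: $\Delta^{(j)}$ of an extended ideal is the extension of $\Delta^{(j)}$ of the ideal (the extra variable contributes nothing new in the differential operators used), so $(\cI_s(e))_{Z(e)}$ is the extension of $(\cI_s)_Z$, and again \ref{C:equaz} applies. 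Combining a permissible transformation, open restriction, or extension in any mixed local sequence, the equality of singular sets propagates step by step.

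\textbf{Main obstacle.} The delicate point is not condition (a) but verifying that the hypotheses of \ref{V:asu} genuinely persist along an arbitrary local sequence — in particular that the inductive multi-ideal remains defined (condition $(\iota)$) and that $Z_j$ stays adapted after each operation, not just after blow-ups. For transformations this is Remark \ref{R:getasu}; the mild extra work is checking it is unaffected by interleaving restrictions and extensions, which is where one must carefully invoke that $\Delta^{(j)}$, $\cC(-)$ and $-|Z$ all commute with pull-back along smooth morphisms. Once that bookkeeping is in place, \ref{P:alfin} does all the real work and the proof closes.
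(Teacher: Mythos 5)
Your proposal is correct and follows essentially the same route as the paper, whose proof is just the citation ``consequence of the discussion in \ref{P:alfin}--\ref{R:getasu}'' (with details deferred to \cite[3.6]{T2}): condition (a) via Corollary \ref{C:equaz}, condition (b) for permissible transformations via Remark \ref{R:getasu} establishing the setting of \ref{V:asu} and then Proposition \ref{P:alfin} at each stage. Your additional bookkeeping for open restrictions and extensions (commutation of $\Delta^{(j)}$, $\cC(-)$ and $-|Z$ with smooth pull-backs) is exactly the content the paper outsources to the reference, so nothing is missing.
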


\begin{proof}
This is  a consequence of the discussion in \ref{P:alfin} - \ref{R:getasu}. See  \cite[3.6]{T2} for details. 
\end{proof}

\smallskip

We have made several times the  assumption  that     
$\dim \sg (\cI) < \dim (\cI) - 1 $. But this hypothesis is not really restrictive  for resolution purposes, as we will see in the proposition below.

\begin{pro}
\label{P:cuno}
Let $\cI$ be a locally nice multi-ideal of dimension $d$ and assume that $C \not= \emptyset$ is the union of the irreducible components of $\sg (\cI)$ of dimension $d-1$. Then, $C$ is a permissible $\cI$-center. Moreover, if $\cI _1=\cT(\cI,C)$, then either $\sg (\cI _1)=\emptyset$ or $\sg(\cI _1)$ has no irreducible components of dimension $d-1$.
\end{pro}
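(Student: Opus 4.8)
\emph{Plan of proof.} The strategy is to use the local niceness hypothesis to trap $\sg(\cI)$ inside a regular adapted hypersurface. This makes $C$ an effective Cartier divisor, so that blowing it up is an isomorphism, and the second assertion then reduces to a short order computation along codimension-one divisors, comparing $(I_i[1],b_i)$ for the locally chosen ``nice'' index $i$ with all the other pairs.

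\emph{Step 1: $C$ is $\cI$-permissible.} I would argue locally. Fix $x\in W$ and a neighbourhood $U$ on which $\cI|U$ is nice, say $i$-nice with $i$-adapted hypersurface $Z$. By \ref{V:ut3} the $W$-pair $(I_i,b_i)$ is good on $U$ and $I_W(Z)\subseteq\Delta^{(b_i-1)}(I_i)$, so by \ref{V:del}
\[
\sg(\cI)\cap U\ \subseteq\ \sg(I_i,b_i)\cap U\ =\ V\bigl(\Delta^{(b_i-1)}(I_i)\bigr)\cap U\ \subseteq\ Z .
\]
Since $Z$ is regular of pure dimension $d-1$, every $(d-1)$-dimensional irreducible component of $\sg(\cI)$ meeting $U$ is an irreducible component of $Z$, hence a connected component of $Z$; thus such components are pairwise disjoint, each is regular, and each is transversal to $E$ (because $Z$ is, by \ref{V:ut3}(b)). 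Gluing over a cover of $W$, $C$ is a regular closed subscheme of $W$, transversal to $E$ (a fortiori with normal crossings with $E$), and $C\subseteq\sg(\cI)$; by \ref{V:m3}(a) this means $C$ is $\cI$-permissible.

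\emph{Step 2: orders after the blow-up.} Being a regular closed subscheme of pure codimension one in the regular variety $W$, $C$ is an effective Cartier divisor, so $W_1\to W$ is an isomorphism; I identify $W_1$ with $W$, the exceptional divisor with $C$, and $\cE\subset\cO_{W_1}$ with $I_W(C)$, so that $I_j[1]=\cE^{-b_j}I_j\cO_{W_1}$ for all $j$. Suppose, for contradiction, that $\sg(\cI_1)$ has a $(d-1)$-dimensional (equivalently, codimension-one) irreducible component $D$. Choose $U$ as in Step 1 meeting $D$ and let $\eta$ be the generic point of $D\cap U$, so that $\cO_{W,\eta}$ is a discrete valuation ring with valuation $\nu_\eta$. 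If $\eta\in C$, then (as in Step 1) $D\cap U$ is one of the connected components $Z^{(\ell)}$ of $Z$, and $Z^{(\ell)}\subseteq C\subseteq\sg(\cI)\subseteq\sg(I_i,b_i)$, so goodness of $(I_i,b_i)$ gives $\nu_\eta(I_i)=b_i$; since $\cE$ is a local equation of $Z^{(\ell)}$ at $\eta$ one has $\nu_\eta(\cE)=1$, whence $\nu_\eta(I_i[1])=\nu_\eta(I_i)-b_i\,\nu_\eta(\cE)=0<b_i$, contradicting $\eta\in\sg(\cI_1)\subseteq\sg(I_i[1],b_i)$. If instead $\eta\notin C$, then $\cE_\eta=\cO_{W_1,\eta}$, so $(I_j[1])_\eta=(I_j)_\eta$ and hence $\nu_\eta(I_j)=\nu_\eta(I_j[1])\ge b_j$ for \emph{every} $j$; thus $\eta\in\bigcap_j\sg(I_j,b_j)=\sg(\cI)$, so $D$ is a $(d-1)$-dimensional component of $\sg(\cI)$ (it cannot lie inside a larger component, since nonzeroness of $\cI$ makes $\sg(\cI)$ omit each irreducible component of $W$, by \ref{D:sing}), i.e. $D\subseteq C$ and $\eta\in C$, a contradiction. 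Therefore $\sg(\cI_1)$ has no $(d-1)$-dimensional irreducible component, which is the second assertion.

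\emph{Main obstacle.} The delicate step is the case $\eta\notin C$. The adapted hypersurface $Z$ of the chosen nice index $i$ may have components that lie in $\sg(I_i,b_i)$ but not in $\sg(\cI)$, and such a component could a priori survive as a codimension-one part of $\sg(I_i[1],b_i)$; what forces it out of $\sg(\cI_1)$ is that it cannot simultaneously be singular for all the pairs $(I_j[1],b_j)$, so the argument genuinely uses that $\sg(\cI_1)$ is the full intersection $\bigcap_j\sg(I_j[1],b_j)$ and not just the $i$-th term. A minor bookkeeping point is that the nice index may change from chart to chart, but since the conclusion is local on $W$ this causes no difficulty.
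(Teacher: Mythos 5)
Your proof is correct and follows essentially the same route as the paper's, which simply defers to \cite[3.3]{T2}: trap the codimension-one components of $\sg(\cI)$ inside local adapted hypersurfaces to get regularity, disjointness and transversality to $E$, note that blowing up the resulting Cartier divisor is an isomorphism of $W$, and rule out a codimension-one component of $\sg(\cI_1)$ by computing orders at its generic point (goodness of the locally chosen pair kills the case $\eta\in C$, and the case $\eta\notin C$ contradicts the definition of $C$).
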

The proof is almost identical to that of  \cite[3.3]{T2},
 where the case of marked ideals is considered.

\begin{voi}
\label{V:ut4}
  Assume $\cI$ is a nice multi-ideal of dimension $d$, such that $\dim  \sg (\cI) \le d-2$, with an adapted hypersurface $Z$. We shall see that, under the hypothesis that we have an algorithmic resolution process valid for multi-ideals of dimension $< d$, our previous results imply that this process extends to $\cI$. 

First note that,  as mentioned in \ref{V:ut3}, 
 $\cI$ automatically satisfies condition ($\iota$) of \ref{V:ut1} and thus the inductive multi-ideal ${\cI}_Z$ is defined. 
Since  multi-ideals of dimension is $< d$ are supposed to be algorithmically solvable,  $\cI _Z$ has an algorithmic resolution 
$$   (1) \quad  \cI _Z = {[{\cI}_Z]}_0 \leftarrow \cdots \leftarrow {[{\cI}_Z]}_r \, ,$$
obtained using algorithmic resolution functions ${\bar h}_0, \ldots, {\bar h}_{r-1}$ (with values in an ordered set $\Lambda^{(d-1)}$) and centers 
$C_j=\ma ({\bar h}_j)$, $j=0, \ldots, r-1$. 
Then, by Proposition \ref{P:alfin}, the sequence (1) induces a permissible sequence 
$$(2) \quad \cI = {\cI}_0 \leftarrow \cdots \leftarrow {\cI}_r \, ,$$
using the same centers, so that  
$\sg({{[\cI _Z]}_j})= \sg ({\cI}_j)$ for all $j$. Thus 
$\sg({{[\cI _Z]}_r})=\emptyset$ implies $\sg(\cI _r)=\emptyset$,  and so (2) is a resolution of $\cI$.  Setting $h_j={\bar h}_j$, 
$j=1, \ldots, r-1$, we get functions from 
 $\sg({{[\cI _Z]}_j})= \sg ({\cI}_j)$ to  $\Lambda^{(d-1)}$,  which are  algorithmic resolution functions for $\cI$, giving  the resolution (2).
\end{voi}

\begin{voi}
\label{V:ut5}
Now we consider a locally nice multi-ideal $\cI$ such that $\dim (\sg(\cI)) \le d-2$. We still  assume that an algorithm of resolution satisfying conditions (a) and (b) of \ref{V:alre} is available when the dimension of the marked ideal is $<d$.
 We shall prove that  $\cI$ can  be inductively resolved. 

There is an open cover $U_v$ of $M=as(\cI)$ (\ref{D:multi-ideal}) such that ${\cI}|U_v$ is nice, with an adapted hypersurface $Z_v \subset W \cap {U_v}$ 
for all  $v$. By induction on the dimension, 
 we have resolution functions 
$h^{(v)}_j$ for ${\cI}|U_v$, for all $v$, as in \ref{V:ut4}.
 We claim that the different functions $h^{(v)}_j$ agree on intersections, determining globally defined resolution functions (with values in $\Lambda ^{(d)}$) for the multi-ideal $\cI$. 

Indeed, consider an intersection 
$U=U_v \cap U_{w}$, let $Z'_v = Z_v \cap U_v$ and $Z'_w= Z \cap U_w$. We have: 
${\cI}_{Z'_v} \sim {\cI}|U \sim {\cI}_{Z'_w}$ (see Proposition \ref{P:heq}). So 
${\cI}_{Z'_v} \sim {\cI}_{Z'_w}$ and hence, by Condition (b) of our algorithm,  their algorithmic resolution functions agree. Since by Condition (a) the resolution functions for 
${\cI}_{Z'_v}$  and   ${\cI}_{Z'_w}$            are respectively the restrictions of those for ${\cI}_{Z_v}$     and ${\cI}_{Z_w}$, we conclude that $h^{(v)}(x)=h^{(w)}(x)$ for all $x \in U_v \cap U_w$.

Furthermore, we claim that the algorithmic resolution for locally nice multi-ideals just defined  
 also satisfies conditions (a) and (b) of \ref{V:alre}. The verification of (a) is straightforward. To verify (b), given  two equivalent  locally nice multi-ideals
$$\cI = (M,W,(I_1,b_1), \ldots, (I_n,b_n),E),~  
 \cJ={(M,V,(J_1,c_1), \ldots, (J_q,c_q),E)}\, , $$
both of the same dimension,  
   by (a), it suffices to show the following statement.  If $U \subseteq M$ is an open set such that 
$\cI |U$  and $\cJ |U$ are nice,  with adapted hypersurfaces $Z \subset W \cap U$ and 
$Z' \subset V \cap U$ respectively, then the resolution functions of 
$\cI |U$ and $\cJ |U$ agree. 

But this assertion is true because, by \ref{P:abi},   
$\cI \sim \cJ$ implies ${\cI}|U \sim {\cJ}|U$ and, by \ref{P:heq} and transitivity, 
${(\cI |U)}_Z  \sim   {(\cJ |U)}_{Z'}$. Since the dimension dropped by one, by induction, the resolution functions of 
${(\cI |U)}_Z$ and   ${(\cI |U)}_{Z'}$ agree. Hence,  by their inductive definition, those of 
$\cI |U$ and $\cJ |U$ also are the same.
\end{voi}

\section{Auxiliary objects}
\label{S:A}
In this section we study some objects useful in the construction of our algorithm. First, we discuss {\it monomial multi-ideals}, a class for which resolution is easily achieved. Later, we explain some auxiliary constructions that will allow us to reduce the problem of resolving general ideals to the monomial situation. 

\begin{voi}
 \label{V:am1}
{\it Monomial multi-ideals}  (\cite[5]{EV} {\it and}  \cite[6.16]{Cu}).  Consider a multi-ideal
$$\cI =(M,W,(I_1,b_1), \ldots, (I_n,b_n),E=(H_1, \ldots, H_m))\, .$$
 We say that $\cI$ is monomial if, for each $x \in \sg(\cI)$, there is an index $i \in \{1,\ldots,n \}$ depending on $x$,  such that  
$$(\star)\qquad {(I_i)}_x=      {{I_W(H_1 \cap W)}_x}^{\alpha_{i1}(x)}  \cdots  {{I_W(H_m \cap W)}_x}^{\alpha_{im}(x)}$$
for suitable exponents. We shall see that if $\cI$ is monomial,  then it can be  resolved rather easily, essentially in a combinatorial way.
\end{voi}

\begin{voi}
 \label{V:am2}
{\it The function $\Gamma $}. 
Let $\cI $  be  a monomial multi-ideal as in \ref{V:am1}. We define   
 the  function ${\Gamma} ={ \Gamma} _{\cI} $ from $S:=\sg (\cI)$ to ${\mathbb Z} \times {\mathbb Q} \times {\mathbb Z}^{\mathbb N}$ by setting, for 
  $z \in S$,  $  { \Gamma} (z) = (- {\Gamma }^{(1)} (z), {\Gamma }^{(2)} (z), {\Gamma }^{(3)} (z))$,  
where the values ${\Gamma }^{(1)}(z)$, ${\Gamma }^{(2)}(z)$ and ${\Gamma }^{(3)}(z)$ are obtained as follows.
 
\begin{itemize}
 \item[${\Gamma }^{(1)}$:] We set ${\Gamma }^{(1)} (z)=p$, where $p$ is the smallest integer $p$ such that there is an index   $q \in \{1,\ldots,n \}$ for which an expression like $(\star)$ is valid for $(I_q)_z$, and for indices $i_1, \ldots, i_p$ we have 
  $$ (1) \qquad \alpha_{qi_1} (z) + \cdots + {\alpha_{qi_p}(z)} \geq b_q~.$$ 
\item[${\Gamma }^{(2)}$:] Letting $p= {\Gamma }^{(1)} (z)$, 
 ${\Gamma }^{(2)} (z)$ is the maximum of the rational numbers 
$$({\alpha _{qi_1} (z)} + \cdots + {\alpha _{qi_p}(z)})/b \, ,$$
for  indices $i_1, \ldots, i_p,q$   for which an equality of type (1) holds.
\item[${\Gamma }^{(3)}$:] Consider the set of all sequences $(i_1, \ldots, i_p,0,0, \ldots)$ such that, for some index $q$,  
$({\alpha _{q i_1} (z)} + \cdots + {\alpha _{q i_p}(z}))/b_q = {\Gamma }^{(2)} (z)$, 
lexicographically ordered. Then,   
${\Gamma }^{(3)}(z) \in {\mathbb Z}^{\mathbb N}$ is the maximum in this ordered set.
\end{itemize}

\smallskip

The function $\Gamma $ has two important properties whose proofs,  very similar to those for basic objects found in  
 \cite[5]{EV}, \cite[6.4]{Cu}, or \cite[5]{BMF},  we omit.

 (a) When the target is lexicographically ordered, $\Gamma$ is an upper semi-continuous function.

(b) If $C=\ma {(\Gamma}_{\cI})=\{x \in \sg (\cI): {\Gamma}_{\cI}(x) = \max ({\Gamma}_{\cI})   \}$, 
 then $C$ is a permissible center for the multi-ideal $\cI$, called the {\it canonical monomial center}. The transform ${\cI}_1$ of $\cI$ is again  monomial, satisfying ${\rm max}~ (\Gamma _{{\cI}_1}) < {\rm max} ~(\Gamma_{\cI}) $ (see \cite[6.17]{Cu}). 

From this fact, it easily follows that if we iterate
 the process of transforming a monomial multi-ideal, using each time the canonical monomial center, after a finite number of steps, we reach a situation where the singular locus is empty. 

It is clear that $\Gamma _{\cI}$ is compatible with open restrictions in the sense that, if $U$ is open in $M$, then 
$({{\Gamma} _{\cI}})_{|{U \cap S}} = {\Gamma }_{\cI |U}$ (with $S=\sg \, (\cI)$).
\end{voi}

\begin{voi}
 \label{V:prop}
{\it Proper transforms}. Let $\cI=(M,W,(I_1,b_1), \ldots,(I_n,b_n),E=(H_1, \ldots, H_m))$ be a multi-ideal, $C$ a permisible center, $\cI \leftarrow \cI_1=
(M_1,W_1,(I_1[1],b_1), \ldots,(I_n[1],b_n),E_1)     $ the transformation with center $C$, $H$  the exceptional divisor, and $\cE=I_{W_1}(H \cap W_1)$. 

In \ref{V:par} we have defined the {\it proper transform} of $I_i$ 
for a fixed $i \in \{  1, \ldots, n \}$). This is the $W_1$-ideal 
${\bar I_i[1]}:={\cE}^{-c_{i1}}I_i{\cO}_{W_{1}}$,
  where the exponent $c_{i1}$, which depends on $z \in W_1$,  is defined as follows.  
  If $z \notin H$, then $c_{i1}=0$; if $z$ is in $H$ and its image $x$ in $W_s $ belongs to the irreducible component $C'$ of $C$,  then $c_{i1}=\nu_g(I_i)$, where 
 $g$ is the generic point of $C'$. Then we have:
$$ (1) \quad I_i[1]= \cE ^{-b_i}I_i{\cO}_{W_{1}}=
  \cE ^{-c_{i1}}(I_i{\cO}_{W_{1}})\cE ^{c_{i1}-b_i} = {\bar I_1}[1]\cE^{a_{i1}} \, , $$ 
with $a_{i1}=c_{i1}-b_i$.
\smallskip

More generally, given a permissible sequence of multi-ideals 
$$(2) \qquad \cI = {\cI}_0 \leftarrow \cdots \leftarrow {\cI}_s ~,$$
where ${\cI}_j=(M_j,W_j,(I_1[j],b_1), \ldots, (I_n[j],b_n),E_j)$,  
we may define the $W_j$-ideals ${\bar I_i}[j]$  inductively: ${\bar I_i}[0]:=I_i$  and, for $j>0$, 
${\bar I_i}[j]$ is the proper transform of ${\bar I_i}[j-1]$, $j=1, \ldots, s$. We call the $W_j$-ideal ${\bar I_i}[j]$ the 
{\it proper transform} of $I_i$ to $W_j$.

Then, for a fixed index $i=1, \ldots,n$, if $0 \le j \le s$ and $1 \le q \le j$,  
writing  
$E_j=(H_1, \ldots, H_m,H_{m+1},  \ldots, H_{m+j})$, and 
$\cE _q = {I_{W_j}}(H_{m+q }\cap W_j)$, 
we have an equality of $W_s$-ideals 
$$ (3) \qquad {I}_i [j]={\bar {I_i}} [j] \, \cE _1^{a_{i1}} \ldots   \cE _j^{a_{ij}} , $$ 
which is called the {\it proper factorization} of the ideal $I_i[j]$. 
 
Likewise, we may consider the proper factorization of the ideal ${I_i}[j+1]$ 
$$(4) \quad  {I}_i [j+1]={\bar {I_i}} [j+1] \, {{\cE '}_1}^{{\alpha}_{i1}} \ldots   {{\cE'} _j}^{\alpha_{ij}} 
{{\cE'} _{j+1}}^{\alpha_{i,j+1}} {\cO}_{j+1}  ,$$
 where $\cE'_q = I_{W_{j+1}}(H_{m+q}  \cap W_{j+1})$, with  
$H_q \subseteq M_{j+1} $ still denoting the strict transform of 
$H_q \subseteq M_{j} $ to $M_{j+1}$, $q=m+1, \ldots , m+j$, and $H_{m+j+1}:=H$  being the last exceptional divisor. 
 
The exponents $a_j$ and $\alpha _j$ that appear respectively in (3) and (4) are related. Indeed,    a calculation similar to that of (1) shows that
$a_{iq}={\alpha}_{iq}$, $q=1, \ldots, j$,  and 
$${\alpha}_{i,j+1}=c_{i,j+1}-b_i+c_{i1} + \cdots + c_{ij} \, ,$$
where the numbers $c_{i1}, \ldots,c_{i,j+1}$, which depend on $z \in \sg(W_{j+1})$,  are as follows. If $z \notin H$ (the exceptional divisor), all of them are zero. Actually, this case is trivial since if  $z \notin H$, then 
${(\cE ' _{j+1})}_z = {\cO _{W_{j+1},z}}$.  If $z \in H$, let  its  image of $x  \in W_{j}$ belong to   the irreducible component $C'$ of the center $C_{j}$ (used in the transformation $\cI_{j} \leftarrow \cI_{j+1}$), and let $g$ be the generic point of $C'$.  Then 
$c_{iq}(z) = \nu_g(\cE _q)$, $q=1, \ldots, j$, and 
$c_{i,{j+1}}(z) = \nu_g({\bar I_i}[j])$.  
\end{voi}

\begin{voi}
\label{V:am3} {\it The functions  ${\o _i}[j]$ and ${ \o}[j]$}.  
Consider a permissible sequence as in \ref{V:prop} (2). 
  For a fixed index $i \in \{ 1, \ldots, n\}$ and $x \in \sg (\cI _j)$, define 
$${\o _i}[j](x):= \nu _x ({\bar{I_i}}[j])\,/\,b_j \, ,$$
where   ${\bar{I_i}}[j])$ denotes the proper transform of $I_i$ to $W_j$. 

 Now we introduce a function ${\o }[j]   :\sg(\cI _j) \to \mathbb Q$ by the formula: for 
 $x \in \sg(\cI _j)$,   
 $${ \o}[j](x)= \min \{ \o _1 [j](x), \ldots,  \o _n [j](x) \}  \,.$$

The functions  $\o _i[j]$ and  $\o[j]$ are upper-semi-continuous,  from $\sg (\cI _j)$ to ${\mathbb Q}$. 

 We say that the sequence \ref{V:prop} (2) is a ${ \o}$-{\it sequence} if the involved permissible centers  $C_1, \ldots, C_{s-1}$  satisfy 
  $C_j \subseteq \ma({  \o} [j])$ (the set of points where ${ \o}[j]$ reaches its maximum value).
\end{voi}

\begin{voi}
\label{V:ensi}
{\it A comment of the function $\o [s]$}.  In the notation of \ref{V:am3}, let $x \in \sg(\cI _s)$ and ${ \o}[s](x) = \beta$. Since  ${ \o}[s](x)= \min \{ \o _1 [s](x), \ldots,  \o _n [s](x) \}$, for some index $v$ we have 
$     \beta = {\o}[s](x)=  \o _v [s](x)  $. However, it does not follow that for a suitable open set $U \subset M_s$ we  have  
$    { \o}[s](z)=  \o _v [s](z)      $ for all $z \in \sg(\cI) \cap U$. Here is an example.

\smallskip

{\it Example}. Let $\cI = (M,W,(I_1,b_1),(I_2,b_2), \emptyset)$, where $M=W={\mathbb A}^2=\Spec (  R   )$, $ R= {\mathbb C}[x,y])$, $I_1=(x^2 y^4)R$, $b_1=2$, $I_2=( x^4 y  )R$, $b_2=3$.  Then, 
$S:=\sg(\cI)=V(x)$ (the $x$-axis). 

Write ${ \o}= { \o}[0]$, ${ \o}_1={ \o}_1[0]$, ${\o}_2= {\o}_2[0]$, and   $a_t=(0,t) \in S$, $t \in \mathbb C$. 
 Then 
$\o _1(a_0)=6/2=3$,  $\o _2(a_0)=5/3$,  hence ${ \o}(a_0)=\o _2(a_0)=5/3$. If $t \not= 0$, 
   $\o _1(a_t)=2/2=1$,  $\o _2(a_t)=4/3$,  hence ${ \o}(a_t)=\o _1(a_t)=1$. Thus, although ${ \o}(a_0)=\o _2(a_0)$, no neighborhood $U$ of $a_0$  satisfies ${ \o}(z)=\o _2(z)$ for all $z \in S \cap U$, because such a set $U$  contains points $a_t$, $t \not= 0$. In this example, $\max({ \o})=5/3$ and $\ma ({ \o})= \{  a_0 \}$.
\end{voi}

\begin{pro}
\label{P:nosube}
If  sequence (2) of \ref{V:prop} is a  ${ \o}$-sequence,  then  
$ \max ({    \o }[j+1]) \le  \max ({  \o }[j]) $.
\end{pro}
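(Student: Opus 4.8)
The plan is to show that the maximum of $\o[j+1]$ does not exceed the maximum of $\o[j]$ by a pointwise comparison carried out at a closed point $x \in \sg(\cI_{j+1})$ where $\o[j+1]$ attains its maximum, tracing the order of the proper transforms backwards through the single blowing-up $W_j \leftarrow W_{j+1}$ with center $C_j \subseteq \ma(\o[j])$. First I would fix such an $x$, let $x'$ be its image in $W_j$, and note that since we are dealing with a $\o$-sequence the center $C_j$ used for this blow-up is contained in $\ma(\o[j])$, so $\o[j](x')$ is constrained: either $x' \notin C_j$, in which case the blow-up is an isomorphism near $x'$ and there is nothing to prove, or $x' \in C_j$, in which case $\o_i[j]$ is at least $\max(\o[j])$ for the relevant indices along the generic point $g$ of the component $C'$ of $C_j$ through $x'$.

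The key computation is the standard one comparing $\nu_x(\bar I_i[j+1])$ with $\nu_g(\bar I_i[j])$: for a blow-up with permissible (regular) center, the order of the proper transform at a point lying over the center cannot exceed the order at the generic point of the center, i.e. $\nu_x(\bar I_i[j+1]) \le \nu_g(\bar I_i[j])$. Dividing by the appropriate $b_i$ (which does not change under transformation), this gives $\o_i[j+1](x) \le \o_i[j](g) \le \o_i[j](x')$ using upper semicontinuity of $\o_i[j]$ (\ref{V:am3}), where the middle inequality holds because $g$ is a generization of $x'$. Taking the minimum over $i$ then yields $\o[j+1](x) \le \o[j](x')$. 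Since $x' \in C_j \subseteq \ma(\o[j])$, we have $\o[j](x') = \max(\o[j])$, and so $\o[j+1](x) \le \max(\o[j])$. As $x$ was a point realizing $\max(\o[j+1])$, this gives $\max(\o[j+1]) \le \max(\o[j])$, as desired.

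The main obstacle is making the inequality $\nu_x(\bar I_i[j+1]) \le \nu_g(\bar I_i[j])$ precise and correctly handling the exceptional exponents. One has to be careful that $\bar I_i[j+1]$ is indeed the proper transform of $\bar I_i[j]$ (this is how the proper transform to $W_{j+1}$ is defined in \ref{V:prop}), not of $I_i[j]$; the factorization \ref{V:prop}(3)--(4) and the relation $\alpha_{i,j+1} = c_{i,j+1} - b_i + c_{i1} + \cdots + c_{ij}$ with $c_{i,j+1}(z) = \nu_g(\bar I_i[j])$ is exactly the bookkeeping needed. The order bound itself is the classical fact that blowing up a regular center cannot raise the order of the proper (weak) transform — this is where one uses that $C_j$ is a regular subscheme transversal to the exceptional structure, and it can be checked in the complete local ring by choosing coordinates adapted to $C'$ as in the proofs invoked for \ref{P:gira} and \ref{P:despe}. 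Once that lemma is in hand, the rest is the semicontinuity argument and the $\o$-sequence hypothesis, both routine.
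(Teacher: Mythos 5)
Your reduction to the minimum is fine (if $\o_i[j+1](x)\le \o_i[j](x')$ held for every $i$, the conclusion would follow), but the key inequality you invoke, $\nu_x({\bar I_i}[j+1]) \le \nu_g({\bar I_i}[j])$, is not a consequence of permissibility of the center alone: the classical ``order of the weak transform does not increase'' lemma requires the ideal to be \emph{equimultiple} along the center, i.e.\ $\nu_{x'}({\bar I_i}[j])=\nu_g({\bar I_i}[j])$ for all $x'$ in the relevant component. For a basic object ($n=1$) this is automatic, because $C_j\subseteq \ma(\o[j])$ forces $\nu({\bar I}[j])$ to be constant equal to $b\max(\o[j])$ along $C_j$. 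For a genuine multi-ideal only the \emph{minimum} $\min_i \o_i[j]$ is constant on $C_j$; the individual $\o_i[j]$ may jump at special points of the center (exactly the phenomenon of \ref{V:ensi}), and then your inequality fails. Concretely: take $M=W={\mathbb A}^3$, $E=\emptyset$, $I_1=(z^{10}x^2+y^3)$, $b_1=2$, $I_2=(x)$, $b_2=1$. Then $\sg(\cI)=C=V(x,y)$, $\o_1\equiv 1$ on $C\setminus\{0\}$ but $\o_1(0)=3/2$, $\o_2\equiv 1$, so $\o=\min(\o_1,\o_2)\equiv 1$ on $C$ and $C=\ma(\o)$ is an $\o$-permissible center. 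Blowing up $C$, in the chart $y=x\tilde y$ one gets ${\bar I_1}[1]=(z^{10}+x\tilde y^3)$, whose order at the origin of the chart is $4$, so $\o_1[1]=2>1=\o_1[0](g)\,(=\o_1[0](g)\le\o_1[0](0)=3/2)$: both links of your chain for $i=1$ are false. (The proposition survives here only because $\o_2[1]=0$.)

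The repair must use the equimultiplicity of the \emph{combined} data rather than of each ${\bar I_i}[j]$ separately. For instance, with $N=\mathrm{lcm}(b_i)$ and $q_i=N/b_i$, set $J=\sum_i {\bar I_i}[j]^{q_i}$; then $\nu_p(J)=N\,\o[j](p)$ is constant equal to $N\max(\o[j])$ along $C_j$, the classical equimultiple lemma applies to $J$, and expanding ${\cE}^{-N\max(\o[j])}J{\cO}_{W_{j+1}}=\sum_i {\cE}^{N(\o_i[j](g)-\max(\o[j]))}({\bar I_i}[j+1])^{q_i}$ with all exponents $\ge 0$ yields $\min_i\o_i[j+1](x)\le \min_i\bigl(\o_i[j](g)-\max(\o[j])+\o_i[j+1](x)\bigr)\le \max(\o[j])$. (Equivalently, one reduces to the associated basic object of \ref{R:boj}; note that the cited source \cite[6.4]{Cu} treats only the case $n=1$, so some such reduction is genuinely needed to cover multi-ideals.) Your remaining steps --- semicontinuity, the case $x'\notin C_j$, and passing from the pointwise bound to the maxima --- are correct as written.
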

\begin{proof}
See \cite[6.4]{Cu}.
\end{proof}

\begin{voi}
\label{V:am4} {\it The functions  ${ t }[s]$}. Assume the sequence \ref{V:prop} (2) is a ${ \o}$-sequence. For $0 \le i \le n$, let $q$ be the smallest index such that 
$\max ({ { \o}}[s-1])  >  \max ({{ \o} }[s])$
 (recall that 
$\max ({{ \o}}[q]) \ge \max ({{ \o }}[q+1]) \ge \cdots $). 
Let  $E_s ^{-}$ denote the set of hypersurfaces in $E_s$, which are strict transforms of hypersurfaces in $E_{q}$.    
 For $x \in \sg (\cI _s)$, 
 write 
$n[s](x)$ for the number of hypersurfaces in ${E_s}^{-}$ that contain the point $x$. Now, for such $x$, define  
$$ { t}[s](x):=({ \o }[s](x),n_s(x) )\,.$$
These are upper-semi-continuous functions from $\sg(\cI_s)$ to ${\mathbb Q} \times {\mathbb N}$. We denote the set of points where 
$ {t} [s]$ reaches its maximum value $\max( { t} [s])$ by   $\ma( { t} [s])$. 

The sequence \ref{V:am3} (1) is called a $ {t}$-sequence if each center $C_j$ involved satisfies 
$C_j \subseteq \ma({ t}[j])$, $i=0, \ldots, s-1$. It can be proved, essentially as in \cite{EV}, that if  
\ref{V:prop} (2) is a $ {t}$-sequence,  then 
$ {\max( { t} [j+1]) \le  \max({t} [j]),} ~ j =0, \ldots, s$, for all $i$.
\end{voi}

\begin{voi}
 \label{V:am5}
  We shall see that, 
assuming resolution for locally nice multi-ideals of dimension $d-1$ and 
starting from a marked ideal $\cI$ of dimension $d$, there is  a ${t} $-sequence which leads to a marked ideal $\cI_s$ where $\max ({ t}[s])$ has dropped.  
To produce such a sequence we shall use  certain auxiliary multi-ideals, which are locally nice and whose singular set locally coincides with $\ma ( { t}[j])$. Since these  auxiliary multi-ideals are locally nice the inductive results of \ref{V:ut5} apply to them.     The construction of these multi-ideals, based on similar work in the context of basic objects presented in \cite{EV},  will be discussed in the remainder of this section.
\end{voi}

\begin{voi}
 \label{V:I'}
{\it The multi-ideal ${\cI}' _{s}$}.  
Consider a $\o$-sequence of basic objects 
$$(1) \qquad {\cI}_0 \leftarrow \cdots \leftarrow {\cI}_s \,.$$
We write ${\cI}_j=(M_j,W_j, (I[j],b),E_j)$ and 
 express the maximum value of its function $ ({ \o}[s])$ as a fraction $b[s] / b$. We 
    associate 
the multi-ideal 
$$  {\cI}_s ' =(M_s,W_s,    (I[s],b), ({\bar I} [s], b[s]), E_s)$$
to the basic object ${\cI}_s$.
\end{voi}
 We describe some properties of the multi-ideal $\cI'_s$ in the next two propositions, where we keep the assumptions and notation of \ref{V:I'}.
\begin{pro}
 \label{P:sini}  
$  \ma ({ \o}[s]) = \sg({\cI}'_{s})     $
\end{pro}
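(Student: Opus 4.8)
The plan is to unwind the definitions of both sides and reduce the equality to a pointwise statement about orders of ideals at closed points. Fix a closed point $x \in \sg(\cI_s)$. By \ref{V:I'}, membership $x \in \sg(\cI'_s)$ means $\nu_x(I[s]) \ge b$ (which holds since $x \in \sg(\cI_s)$) together with $\nu_x(\bar I[s]) \ge b[s]$. Since $\max({\o}[s]) = b[s]/b$ by definition, and $\o[s](x) = \nu_x(\bar I[s])/b$ in the basic-object case (here $n=1$, so $\o[s] = \o_1[s]$; see \ref{V:am3}), the condition $\nu_x(\bar I[s]) \ge b[s]$ is exactly $\o[s](x) \ge \max({\o}[s])$, i.e. $x \in \ma({\o}[s])$. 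Thus both containments $\ma({\o}[s]) \subseteq \sg(\cI'_s)$ and $\sg(\cI'_s) \subseteq \ma({\o}[s])$ follow once one checks that $\sg(\cI_s) \subseteq \sg(I[s],b)$ guarantees the first coordinate condition is automatic on $\ma({\o}[s])$, and that on $\sg(\cI'_s)$ we a priori have $x \in \sg(I[s],b) \subseteq W_s$ so the proper transform $\bar I[s]$ is defined there.

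First I would record the two elementary facts: (i) $\sg(\cI'_s) = \sg(I[s],b) \cap \sg(\bar I[s], b[s])$ by the definition of the singular set of a multi-ideal (\ref{D:sing}); (ii) $\ma({\o}[s]) = \{x \in \sg(\cI_s) : \o[s](x) = \max({\o}[s])\}$ with $\sg(\cI_s) = \sg(I[s],b)$ since $\cI_s$ is a basic object. Then the core computation is simply
$$
x \in \ma({\o}[s]) \iff x \in \sg(I[s],b) \text{ and } \frac{\nu_x(\bar I[s])}{b} \ge \frac{b[s]}{b} \iff x \in \sg(I[s],b) \text{ and } \nu_x(\bar I[s]) \ge b[s],
$$
the right-hand side being exactly the condition $x \in \sg(\cI'_s)$. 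One subtlety to address: the inequality $\o[s](x) \le \max({\o}[s])$ always holds by definition of the max, so ``$\o[s](x) = \max$'' and ``$\o[s](x) \ge \max$'' are interchangeable; this is what lets the statement reduce to a single $\ge$ condition matching the order condition in $\sg(\bar I[s], b[s])$.

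I would then remark that since both sets are closed subsets of $W_s$ and the above equivalence holds at every closed point, and closed points are dense, the two closed sets coincide. Alternatively one can run the argument at arbitrary points $y$ using that $\nu_y$ and $\o[s]$ are well-defined there too, avoiding any density argument; I would phrase it this way to keep it clean. The main (and really only) obstacle is bookkeeping: making sure the normalization $b[s]/b$ of $\max({\o}[s])$ is used consistently, that $\bar I[s]$ is the proper transform and $\o_i[s] = \nu_x(\bar I_i[s])/b$ as in \ref{V:am3} rather than some controlled-transform variant, and that the "first coordinate" condition $\nu_x(I[s]) \ge b$ is genuinely free on both sides (on the left because $\ma({\o}[s]) \subseteq \sg(\cI_s) = \sg(I[s],b)$, on the right because it is literally one of the two defining conditions of $\sg(\cI'_s)$). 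No hard geometry is involved; this is a definitional unwinding.
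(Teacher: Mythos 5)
Your proof is correct and is essentially the paper's own argument: both sides reduce to the conditions $x\in\sg(I[s],b)$ and $\nu_x(\bar I[s])\ge b[s]$, with the only point of substance being that $\o[s](x)\le\max(\o[s])=b[s]/b$ on the whole domain, so ``$\ge$'' and ``$=$'' are interchangeable. The density/closed-point discussion is unnecessary (as you note), since $\nu_x$ is defined at every point; otherwise this matches the paper's definitional unwinding.
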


\begin{proof} 
 If $x \in W_s$ is in $ \sg({\cI}'_{s})$, then $x \in \sg (I[s],b) = \sg ({\cI}_s)$, i.e., the domain of $\o [s]$. Moreover, $x \in 
\sg ({\bar I}[s],b[s])$, i.e., $\nu_x({\bar I}[s]) \ge b[s]$. By maximality, $\o [s](x) = \nu_x({\bar I}[s]) = b[s]$, i.e., 
$x \in \ma (\o[s])$. For the  inclusion $\subseteq$, if $x \in \ma ({ \o}[s])$, then $x \in \sg({\cI}_s) = \sg (I[s],b)$ and 
$\o [s](x) = \nu_x({\bar I}[s]) = b[s]$. Hence $x \in \sg ({\bar I}[s],b[s])$; ie., $x \in \sg({\cI}'_{s})$
\end{proof}
Notice that $ b[s]$ may be $  \ge  b$, or not. If $b[s]  \ge  b$  then, for $x \in W_s$,  $\nu_x({\bar I}[s]) \ge b[s]$ implies 
 $\nu_x({I}[s]) \ge b$. In this case, the singular set of the basic object 
$(M_s,W_s,    ({\bar I} [s], b[s]), E_s)$  agrees with $\ma (\o [s])$. 
\begin{pro}
 \label{P:pro'}  
$(i)$ A closed subscheme $C$ of $W_s$ is a permissible center for $\cI' _s$ 
 if and only if $C \subseteq F= \ma ({\o}[s]) $.     

$(ii)$  Let $C$    be a ${\cI} _s$-permissible center contained in $\ma (\o[s])$             (which, by ($i$) $C$, is also an ${\cI}' _{s}$-center),  and consider 
 the transformations   ${{\cI}_s \leftarrow {\cI}_{s+1}}$ and 
$\cI ' _s \leftarrow {[\cI' _{s}]}_1$, with center $C$. 
  Then we have:
 
(a) If  $\max({\o} [s+1])  > \max({ \o} [s])$, then, $\sg ({[\cI ' _{s}]}_1)=\emptyset$. 

(b) If  $\max({ \o} [s])=({ \o} [s+1])$,
 then $\cI' _{s+1} = {[\cI ' _{s}]}_1$ 
\end{pro}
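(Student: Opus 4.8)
\textbf{Proof plan for Proposition \ref{P:pro'}.}

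The plan is to work directly with the proper factorizations from \ref{V:prop} and the definitions of $\o$ and $\o'$-style functions, reducing everything to a bookkeeping computation of the exponent of the exceptional divisor. For part $(i)$, the forward implication is immediate: any $\cI'_s$-permissible center $C$ satisfies $C\subseteq\sg(\cI'_s)$, which equals $F=\ma(\o[s])$ by Proposition \ref{P:sini}. For the converse, suppose $C\subseteq F$; I must check $C$ has normal crossings with $E_s$ and $C\subseteq\sg(\cI'_s)$. The normal crossings condition is exactly the condition for $C$ to be $\cI_s$-permissible (it only involves $E_s$ and $W_s$, not the ideals), and since $F\subseteq\sg(\cI_s)$ any such $C$ is already $\cI_s$-permissible; then $C\subseteq F=\sg(\cI'_s)$ again by \ref{P:sini} gives the remaining inclusion. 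So $(i)$ is essentially a restatement of \ref{P:sini} together with the observation that permissibility of a center is checked against $\sg(\cdot)$ plus a normal-crossings condition that is the same for $\cI_s$ and $\cI'_s$.

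For part $(ii)$, fix an irreducible component $C'$ of $C$ with generic point $g$, and let $H$ be the new exceptional divisor of $W_s\leftarrow W_{s+1}$, $\cE=I_{W_{s+1}}(H\cap W_{s+1})$. Since $C\subseteq F=\ma(\o[s])$, the generic value $\o[s](g)=b[s]/b$ is the maximum, so $\nu_g(\bar I[s])=b[s]$ exactly (at a generic point one gets equality, not merely $\geq$). The key computation is to track the proper transform $\bar I[s+1]$ and the controlled transform of the auxiliary pair $(\bar I[s],b[s])$. By definition of proper transform, $\bar I[s+1]=\cE^{-\nu_g(\bar I[s])}\bar I[s]\cO_{W_{s+1}}=\cE^{-b[s]}\bar I[s]\cO_{W_{s+1}}$ along the component of $H$ over $C'$. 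Meanwhile the transform of the pair $(\bar I[s],b[s])$ with center $C$ has controlled ideal $\cE^{-b[s]}\bar I[s]\cO_{W_{s+1}}$, which is therefore precisely $\bar I[s+1]$. Hence the pair $(\bar I[s],b[s])$ transforms, as a pair in the multi-ideal, to $(\bar I[s+1],b[s])$.

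Now the two cases split according to whether $\max(\o[s+1])$ exceeds $\max(\o[s])$ or equals it. For $(a)$: if $\max(\o[s+1])>\max(\o[s])=b[s]/b$, then at every point $y\in\sg(\cI_{s+1})$ we would have $\o[s+1](y)>b[s]/b$, hence $\nu_y(\bar I_i[s+1])>b[s]\cdot b_i/b$ for the index $i$ achieving the minimum — but I need to see that this forces $\sg((\bar I[s+1],b[s]))$, intersected with $\sg(\cI_{s+1})$, to be empty; the point is that $\sg([\cI'_s]_1)\subseteq\sg(\cI_{s+1})\cap\sg(\bar I[s+1],b[s])=\ma(\o[s+1])$ by the analog of \ref{P:sini} applied one step later, and this set is empty precisely because the maximum strictly increased (there is no point where $\o[s+1]$ attains the old value $b[s]/b$, and $\sg(\bar I[s+1],b[s])$ consists of points with $\o[s+1]\geq b[s]/b$ lying in $\sg(\cI_{s+1})$, forcing $\o[s+1]=b[s]/b$ there by \ref{P:nosube}). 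For $(b)$: if $\max(\o[s+1])=\max(\o[s])=b[s]/b$, then $b[s+1]=b[s]$, and since the underlying basic object $\cI_s$ transforms to $\cI_{s+1}$ and the auxiliary pair transforms to $(\bar I[s+1],b[s])=(\bar I[s+1],b[s+1])$ as computed above, the multi-ideal $[\cI'_s]_1=(M_{s+1},W_{s+1},(I[s+1],b),(\bar I[s+1],b[s+1]),E_{s+1})$ is exactly $\cI'_{s+1}$ by the definition in \ref{V:I'}.

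\textbf{Main obstacle.} The delicate point is the exceptional-divisor bookkeeping in part $(ii)$: one must be careful that the proper transform $\bar I[s+1]$ (defined with the locally-constant exponent $\nu_g(\bar I[s])$ componentwise over $C$) really coincides with the controlled transform of the pair $(\bar I[s],b[s])$, which requires knowing $\nu_g(\bar I[s])=b[s]$ at every generic point of $C$ — this is exactly where $C\subseteq\ma(\o[s])$ is used, via the fact that the maximum value $b[s]/b$ is attained with equality $\nu_g(\bar I[s])=b[s]$ at generic points (upper-semicontinuity forces the generic value to be the maximum on an irreducible center, and $\o[s](g)>b[s]/b$ is impossible). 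Once this identification is in hand, both $(a)$ and $(b)$ follow from comparing $\max(\o[s+1])$ with $b[s]/b$, together with Proposition \ref{P:nosube} to know the maximum never rises; the analog of \ref{P:sini} for the sequence continued one more step gives $\sg([\cI'_s]_1)=\ma(\o[s+1])$, from which $(a)$ is the statement that this set is empty when the max strictly increases. I expect the routine verification of the coincidence of transforms, paralleling the argument in \cite[6.4]{Cu} and \cite{EV}, to be straightforward but notationally heavy.
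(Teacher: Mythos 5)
Your proposal is correct and follows essentially the same route as the paper: part (i) reduces to Proposition \ref{P:sini} plus the observation that $\cI_s$ and $\cI'_s$ carry the same divisor data $E_s$, and part (ii) hinges on identifying the controlled transform of $(\bar I[s],b[s])$ with the proper transform $\bar I[s+1]$ (the paper phrases this as the pair being good, you phrase it via $\nu_g(\bar I[s])=b[s]$ at generic points of $C$ --- the same fact), after which (a) and (b) follow from the definition of $b[s+1]$ together with \ref{P:nosube}. Note only that the inequality in (a) is a typo in the statement (it should read $\max(\o[s])>\max(\o[s+1])$, the only possible strict case by \ref{P:nosube}); your argument in fact treats the correct case.
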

\begin{proof}
 $(i)$ is a consequence of \ref{P:sini} and of the fact that for both $\cI ' _s$ and $\cI _s$, the sequence of hypersurfaces is the same, namely $E_s$ (\ref{V:I'}).

\smallskip

$(ii)$ Since  $b _i[s] \ge b_i[s+1]$), only  (a) or (b) above may occur.   

Assume we have (a). One of the pairs in ${[\cI]}_1$ is the transform of $\uP=({\bar I}[s],b[s])$, namely 
$\uP _{1} = (\cE ^{-b[s]}{\bar I}_s {\cO}_{W_{s+1}}, b[s])$, with $\cE$ defining the exceptional divisor. So, the ideal 
$  \cE ^{-b[s]}{\bar I}_s {\cO}_{W_{s+1}}  $ is the controlled transform of ${\bar I}[s]$. 
Since $\uP$ is good, this controlled transform agrees with the proper one, that is $\uP _1=(  {\bar I}[s+1], b[s] ) $. By definition of $b[s+1]$,  
$\nu _x ({\bar I}_{s+1})=b[s+1] < b[s]$, for all $x \in \sg(I[s+1]$. So, the inequality $\nu _x ({\bar I}[s+1]) \ge b[s+1]$ cannot hold for any $x \in W_{s+1}$, which means: $\sg({\cI}_{s+1})=\emptyset$.

Now assume (b), i.e., 
$\max ({ \o}[s]) =  \max ({\o}[s+1])=\gamma$. This means  $b _i[s] =  b_i[s+1]:=b'$.
 To show that $\ma ({ \o}[s+1]) =  \sg( {[\cI' _{s}]}_1  )$, it suffices to verify that the pair 
$({\bar I}_i[s+1],b') $ is the controlled transform of the pair $({\bar I}_i[s],b')$. By definition, 
$({\bar I}_i[s+1],b')$ is the proper transform of $({\bar I}_i[s],b')$. But by the definition of $b'=b[s]$, the pair$({\bar I}_i[s],b')$ is good, thus its proper and controlled transforms agree (\ref{V:par}).   
\end{proof}
Next we define the auxiliary multi-ideal announced in \ref{V:am5}.
\begin{voi} 
\label{V:dia} {\it The multi-ideal ${\cI}^{\diamond}_{s}$}.  We  assume that  \ref{V:I'} (1) is a $t$-sequence of basic objects, and we write     $E_j=(H_1,  \ldots, H_{m+j})$, $j=0, \ldots, s$.

Note that \ref{V:I'} (1) is also a $ \o$-sequence, and consequently, 
 $ \max({ \o }[j]  ) \ge  {\max ( { \o}[j+1] )} $ for all $j$.
 Let $q$ be the smallest index such that 
$\max ( {\o }[j]  )=\max ( { \o}[s]  )$ (so either $q=0$ or $\max ( { \o}[q-1]>\max ( { \o}[q]  )  )$. Let   
${ E}_s ^{-}:=(H_1,\ldots, H_q) \subset E_s $ (i.e., $E_s$ consists of the strict transforms of hypersurfaces in $E_q$) and 
 ${ E}_s ^{+}={ E}_s  \setminus E_s ^{-}$.

Write 
  $\max ({ t}[s])=(\gamma [s], \tilde n[s]) \in \mathbb Q \times \mathbb Z$, where 
$\gamma [s]=\max ({ \o} [s])=b[s]/b$. Let $T[s]$ denote the set of subsequences $(H_{j_1}, \ldots, H_{j_{\bar n}})$ of $E^{-}_{i,s}$, $j_1 < \cdots  <  j_{\bar n}$ (see \ref{V:am4}), and let
$L[s] = \prod (I_{W_s}( H_{j_1} \cap W_s) + \cdots +  I_{W_s}( H_{j_{\bar n}} \cap W_s )) \, ,$
where the product is taken over all sequences $(H_{j_1}, \ldots, H_{j_{\bar n}}) \in T[s]$.

 We  associate to $\cI_s$ the multi-ideal:  
$${\cI}^{\diamond}_{s} = (M_s, W_s,          (I[s],b), ({\bar I} [s], b[s]) ,  (L[s],1) , { E}^{+}_{s}|U) \, .$$ 
\end{voi}

\begin{pro}
 \label{P:idiam} 
Using    the notation of \ref{V:dia},
             the multi-ideal  $\cI^{\diamond}_{s}$  has the following properties: 
\begin{itemize}
\item[(i)] It is locally $2$-nice (i.e., $l$-nice, for  $l=2$). 
\item[(ii)] A subscheme $C$ of $W_s$ is a $t$-permissible center for $\cI_s$ if and only if $C$ is a permissible $\cI^{\diamond}_{s}$-center.
\item[(iii)] Let $C$ be a $\cI^{\diamond}_{s}$-permissible center.  
 If ${\cI \leftarrow {\cI}_{s+1}}$ and 
${\cI^{\diamond}_{s} \leftarrow {[\cI^{\diamond}_{s}]}_1}$ are the transformations  with center $C$,  then, $\max({ t}  [s])  \ge  \max({ t} [s+1])$. Moreover, 
 \begin{itemize}
\item[($\alpha$)]  if 
 $\max({ t}  [s]) > \max({ t} [s+1])$, then $\sg({[\cI ^{\diamond}_{s}]}_1)=\emptyset$;  
\item[($\beta$)] if 
 $\max({ t}  [s])=\max({ t} [s+1])$, then
$\ma({\bar t}[s+1]) = \sg({[\cI ^{\diamond}_{s}]}_1) \, .$ 
\end{itemize}
\end{itemize}
\end{pro}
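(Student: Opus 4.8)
The plan is to verify the three assertions in turn, reducing each to facts already established about the simpler multi-ideal $\cI'_s$ (Propositions \ref{P:sini} and \ref{P:pro'}) and about monomial-type centers. For (i), I would argue locally on $M_s$. Since the sequence \ref{V:I'}(1) is a $t$-sequence, it is in particular a $\o$-sequence, so $\uP = ({\bar I}[s], b[s])$ is a pair of maximal order (``good''): indeed $b[s]/b = \max(\o[s])$ means $\nu_x({\bar I}[s]) = b[s]$ for every $x \in \ma(\o[s])$, and by upper-semicontinuity $\nu_x({\bar I}[s]) \ge b[s]$ precisely on $\sg({\bar I}[s],b[s])$. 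Then $\Delta^{(b[s]-1)}({\bar I}[s])$ contains an order-one element near any point of its singular locus; choosing such an element to define a regular hypersurface $Z$ transversal to $E^+_s$ shows $\cI^\diamond_s$ is $2$-nice on a neighborhood, hence locally $2$-nice. This is the routine part.

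For (ii), the key observation is that $\sg(\cI^\diamond_s) = \sg(\cI'_s) \cap V(L[s])$, and by Proposition \ref{P:pro'}(i) a subscheme $C$ is $\cI'_s$-permissible iff $C \subseteq \ma(\o[s])$, i.e.\ iff $C$ is a $\o$-permissible center for $\cI_s$. So I must show that a $\o$-permissible $C \subseteq \ma(\o[s])$ lies in $V(L[s])$ and has normal crossings with $E^+_s$ exactly when the extra numerical condition defining a $t$-permissible center holds — namely that $C$ be contained in $\ma(t[s]) = \ma(\o[s]) \cap \{n[s] = \tilde n[s]\}$. This is a direct combinatorial unwinding of the definition of $L[s]$ in \ref{V:dia}: a point $x$ lies in $V(L[s])$ iff $x$ lies on at least $\tilde n[s]$ of the hypersurfaces of $E^-_s$ (every $\tilde n[s]$-fold product of their ideals vanishes at $x$), which together with $x \in \ma(\o[s])$ forces $n[s](x) = \tilde n[s]$; and the $E^+_s$-transversality requirement for an $\cI^\diamond_s$-center is precisely the normal-crossings condition with $E_s$ that is part of $t$-permissibility. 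The same argument applied pointwise along $C$ gives the equivalence; this mirrors the corresponding lemma for basic objects in \cite{EV}.

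For (iii), I would combine (ii) with Proposition \ref{P:pro'}(ii) and the behavior of the monomial factor $L[s]$ under blowing up. Writing $\cI^\diamond_s \leftarrow [\cI^\diamond_s]_1$, its pairs are the transforms of $(I[s],b)$, of $({\bar I}[s],b[s])$, and of $(L[s],1)$; the first two together give $[\cI'_s]_1$. If $\max(\o[s+1]) > \max(\o[s])$ then Proposition \ref{P:pro'}(ii)(a) already gives $\sg([\cI'_s]_1) = \emptyset$, hence a fortiori $\sg([\cI^\diamond_s]_1) = \emptyset$, which covers part of case ($\alpha$). The remaining possibility is $\max(\o[s+1]) = \max(\o[s])$ but $\tilde n[s+1] < \tilde n[s]$, i.e.\ the drop in $t[s]$ comes from the exceptional-hypersurface count; here one checks, by the standard computation of how the strict transforms of $E^-_s$ and the new exceptional divisor contribute to $L[s+1]$ (the new exceptional divisor is not counted in $E^-_{s+1}$ when $\o$ stays maximal, so the number of relevant hypersurfaces through any point of $[\cI^\diamond_s]_1$ is $< \tilde n[s]$), that the controlled transform of $(L[s],1)$ has empty singular locus, giving ($\alpha$). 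When $\max(t[s]) = \max(t[s+1])$, then by \ref{P:pro'}(ii)(b) the transform of the $\cI'_s$-part is $\cI'_{s+1}$, and a parallel check shows the transform of $(L[s],1)$ is $(L[s+1],1)$ up to an invertible factor on the region where $t$ stays maximal, so $\sg([\cI^\diamond_s]_1) = \sg(\cI'_{s+1}) \cap V(L[s+1]) = \ma(t[s+1])$, which is ($\beta$); that $\max(t[s]) \ge \max(t[s+1])$ is the inequality recorded in \ref{V:am4}.

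The main obstacle is the bookkeeping in (iii)($\alpha$)–($\beta$): one has to track precisely how the monomial ideal $L[s]$ and the splitting $E_s = E^-_s \sqcup E^+_s$ transform under the blow-up, in particular confirming that when $\o$ remains at its maximum the newly created exceptional divisor joins $E^+$ rather than $E^-$, so that the count $n[s]$ genuinely strictly drops on the transform whenever $\tilde n$ drops — this is exactly the place where the analogous argument for basic objects in \cite{EV} must be reproduced faithfully for multi-ideals.
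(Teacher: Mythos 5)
Your proposal follows essentially the same route as the paper's proof: for (i) you extract an order-one element of $\Delta^{(b[s]-1)}({\bar I}[s])$ at points of $\sg(\cI^{\diamond}_{s})$ to get a local adapted hypersurface transversal to $E^{+}_{s}$; for (ii) you use the decomposition $\sg(\cI^{\diamond}_{s})=\sg(\cI'_{s})\cap\sg(L[s],1)$ together with Proposition \ref{P:pro'}(i) and the same normal-crossings equivalence between $E_s$ and $E^{+}_{s}$ (the paper's claim $(\star)$); and for (iii) you split according to whether the drop in $t$ comes from $\o$ (handled by \ref{P:pro'}(ii)) or from the hypersurface count (handled by the transform of $(L[s],1)$), exactly as in the paper. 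The argument is correct and no genuinely different idea is introduced.
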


\begin{proof} 
(i)  The pair $({\bar I}[s],b[s])$ appears in $\cI^{\diamond}_{i,s}$, namely in the second position. 
 Hence we have $\sg(\cI^{\diamond}_{s}) \subseteq \sg({\bar I}[s],b[s])$. 
 For any point $x \in \sg(\cI^{\diamond}_{s})$, 
since $b[s] = \nu _x ( {\bar I}[s])$,  there is an element $g \in  {{\bar I}[s]}_x$ such that 
$\nu _x (g) = b [s]$. 
 Hence taking suitable derivatives, the stalk 
${\Delta ^{b[s]}}{({\bar I}[s])}_x$ contains an element of order one, which defines a regular hypersurface $Z$ containing $x$
 on a appropriate neighborhood $U$ of $x$. From the definition of $E^+_{s}$,  it follows that $Z$ is transversal to 
$E^+_{s}|U$ and, therefore, $\cI^{\diamond}_{s}$ is locally $2$-nice.

(ii)  
We have $  \sg(\cI^{\diamond}_{s})=\sg({\cI}_s') \cap \sg (L[s],1)   $. 
 We claim that
\begin{itemize}
 \item [($\star$)]
 a subscheme $C \subseteq \ma(t[s])=\sg(\cI^{\diamond}_{s})$ has normal crossings with $E_s$ (the set of hypersurfaces of $\cI _s) $ if and only if $C$ has normal crossings with $E_s^{+}$(the set of hypersurfaces for $\cI^{\diamond}_{s}$). 
\end{itemize}
First we prove ($\star$) in case   
${ \o}[s-1] > { \o}[s]$. To do so,  let 
   $\max (t_i [s-1]) = (\beta, \tilde n)$. Assume that $x \in \sg ({\cI ^{\diamond}}_s)$ is a closed point,  and that 	
  $H_{i_1}, \ldots, H_{i_{\tilde n}}$ are the hypersurfaces of ${ E} _s={ { E}_s}^{-}$  containing $x$ (in this case $  {{ E}_s}^{+} = \emptyset$). 
 From the maximality of $\tilde n
 $,  if we restrict to a suitable neighborhood of $x$, $C \subseteq  H_{i_1} \cap \ldots  \cap H_{i_{\tilde n}}$. 
 This inclusion easily implies that 
 $C$ must have normal crossings with ${\tilde E}_s$. 
  The general case follows from the situation just discussed and the fact that the hypersurfaces in ${E}_s ^{-}$ are the exceptional divisors that appear when we go from $\cI_q$ to $\cI_s$
  (in the notation of \ref{V:am4}).

Now we prove that if $C$ is a $t$-center for  $\cI _s$, then it is a $\cI ^{\diamond} _s$-center. Since $C$ is also a $\o$-center for $\cI _s$,  
 by \ref{P:pro'}, we have $C \subseteq \sg({\cI '}_s)$. 
 But by the assumed $t$-permissibility, we also have $\nu _x (L[s]) \ge 1$, i.e.,  $C \subseteq \sg ( L[s],1  )$ for all $i$. Thus 
$C \subseteq \sg(\cI^{\diamond}_{s})$ and, by ($\star$),  $C$  is a 
$\cI^{\diamond}_{s}$-center.

For the converse, $C$ being a $\cI^{\diamond}_{s}$-center implies $C \subseteq \sg(L[s],1)$ and, by \ref{P:pro'}, also  
$C \subseteq \sg( {\cI} ^{\diamond}_s)$. Thus, by ($\star$), we have the desired implication.

\smallskip

(iii) In \ref{V:am4} we have seen that  
$ \max(t[s])   \ge \max(t[s+1])$. Note that  we have the equality 
$$(1) \qquad         \max(t[s])   =  \max(t[s+1])     $$
if and only if  
$\max(\o[s])   =  \max(\o[s+1])$ and 
$   \max(n[s])   =  \max(n[s+1])      $. 

 {\it Case ($\alpha$}). Assume that   $\max(t[s])   > \max(t[s+1])$. If this happens because 
$\max (\o[s])   >  \max(  \o[s+1])$, then \ref{P:pro'} (a) shows that 
$\sg ( {[\cI ^{\diamond}]}_1   )=\emptyset$. If the reason is that 
$   \max(n_i[s])   >  \max(n_i[s+1])  $ then, letting  
 ${\uL}'$ denote the   controlled transform of $\uL:=(L_i[s],1)$, we have  
$ \sg ( {\uL}' )=\emptyset  $. So, in either situation 
$\sg({[\cI ^{\diamond}_{s}]}_1)=\emptyset$, and thus 
case ($\alpha$) is checked.

\smallskip

{\it Case ($\beta$)}. The statement follows from (ii) (b) of    \ref{P:pro'} and the fact that 
the equality  $ \max(n_i[s])   =  \max(n_i[s+1])   $  insures that $(L[s+1],1)$ is the controlled transform of $((L[s],1)$.
\end{proof}

\begin{voi}
\label{V:apli}
We explain the use of the multi-ideal       $\cI^{\diamond}_{s}$ thus expanding \ref{V:am5}. 

Assume we have a ${ t}$-permissible sequence of basic objects 
$$(1) \qquad    
\cI _0 \leftarrow \cI _{1} \leftarrow  \cdots \leftarrow  \cI _{s} \, ,$$ 
where 
${ \o}[s-1] > { \o}[s]$. Consider the multi-ideal $\cI^{\diamond}_{s}$ of \ref{V:dia}, to which we may apply  the results of 
\ref{V:ut4} and  \ref{V:ut5},   since it is locally nice. 

Suppose the $\dim (\cI)=d$ and  $\dim (\ma (t[s])) \le d-2$.. 
Then, by \ref{P:idiam}, we  get a collection of open sets ${\mathcal U}$ covering $\ma (t[s])$, so that for each set $U \in {\mathcal U}$ we have an adapted hypersurface $Z_U$ and an inductive $(d-1)$-dimensional multi-ideal $ {(\cI^{\diamond}_{s}|U)}_{Z_U}  $. By the results in \ref{V:ut4} and  \ref{V:ut5} and by 
  induction on the dimension, we obtain a resolution 
$$\cI^{\diamond}_{s}  \leftarrow    {[\cI^{\diamond}_{s}]}_1   \leftarrow     \cdots    \leftarrow    {[\cI^{\diamond}_{s}]}_{r_1} .   $$ 
By \ref{P:idiam}, this resolution induces a $t$-permissible sequence   
$\cI _s \leftarrow \cI _{s+1} \leftarrow \cI _{s+r_1}$, extending  
 (1), so that  
 $\max ({ t}[s]) = \ldots = \max ({ t}[s+r_1-1])$. Concerning $\cI _{s+r_1}$, either $\sg(\cI _{s+r_1})=\emptyset$ or 
  ${ t}[s+r_1-1] > {t}[s+r_1]   $. If the second possibility occurs,     we take the associated multi-ideal $ {\cI}^{\diamond}_{s+r_1} $ and repeat the process just described.      Iterating further, if necessary, we see that  there is an index  
 $r' \ge r_1$ such that either 
 $ \sg (   {\cI}_{s+r'})=\emptyset $ or, for some index $i$,  $\max ( \o  _i [s+r']  )= 0$. Thus, either we have a resolution of $\cI _0$ or we reached a   situation where  the basic object 
  $\cI _{s+r'}$ is monomial. Then this can be resolved as indicated in \ref{V:am2}. 

The above explanation on the use of $\cI^{\diamond}_{s} $   will be formalized  in the next section, where we describe our resolution algorithm.
\end{voi}

\begin{voi}
\label{V:cone}
The assumption on the dimension of  $  \ma (t[s])  $ made in \ref{V:apli} is not really restrictive, because 
  the situation where   the codimension of $  \ma (t[s])  $ in $W_s$ is equal to one is easily handled. Indeed, if $C$ is the union of the one-codimensional components of $\ma (t[s])$ then, as  a   consequence of Propositions \ref{P:heq}  and 
\ref{P:idiam}, the variety 
$C$ is a permissible center for $\cI _s$,  and if $\cI _{s+1}$ is the transform of $\cI _s$ with center $C$, $\ma (t[s+1])$ has no irreducible components of codimension one. See    \cite[3.9]{T2} for more details.
\end{voi}

\begin{rem}
\label{R:tga}
 {\it The invariance of the functions $t$ and $\Gamma$}. In \cite[2.8]{T2}, it is proved that when working with basic objects, the $t$ and $\Gamma$ functions of \ref{V:am2}
 and \ref{V:am4} respectively, are invariant under equivalence. 

Let us explain more precisely the meaning of this statement for  $t$. Assume  $\cI=(M,W,(I_1,b), E)$ and 
 $\cJ=(M,V,(J,c),E)$  are equivalent basic objects of the same dimension,  and that  
 $$(1) \quad    \cI=\cI _0 \leftarrow \cI _{1} \leftarrow  \cdots \leftarrow  \cI _{s} ~ {\mathrm {and}}~
(2) \quad    \cJ=\cJ _0 \leftarrow \cJ_{1} \leftarrow  \cdots \leftarrow  \cJ _{s}$$
\noindent are $t$-sequences (\ref{V:am4}), 
where the center used for corresponding transformation is the same in both cases. Then,   indicating by  $t[j]$ and   $t'[j]$) the $j$-th $t$-function   of (1) and (2) respectively, we have 
 $t [j]=t' [j]$  for all $j$. 

The proof is easily obtained using the following Hironaka's theorem  (\cite{Hiro},  \cite[6.1]{BMF}):  
 if  $\cI=(M,W,(I,b), E)$ and 
 $\cJ=(M,W,(J,c),E)$  are equivalent basic objects with $\dim (W)=\dim (V)$, then $\nu_x(I)/b = \nu_x(J)/c$, for all $x \in \sg(\cI)=\sg(\cJ)$. 

Now we explain the statement about $\Gamma$. Assume that in the sequences (1) and (2) we have that $\max (\o[s])=0$, and because  $\cI \sim \cJ$, also 
$\max (\o'[s])=0$, where $\o'[s]$ is the $s$-th $\o$-function of $\cJ$. Then both $\cI_s$ and $\cJ _s$ are monomial and  
$\Gamma _{\cI_s}= \Gamma _{\cJ _s}$. Thus $\cI_s$ and $\cJ _s$ have 
 a common  canonical monomial center $C$. We assert that  if $\cI_{s+1}$ and $\cJ _{s+1}$ are the transforms of $\cI_s$ and $\cJ _s$ respectively with center $C$, then again 
$\Gamma _{\cI_{s+1}}= \Gamma _{\cJ _{s+1}}$, and so on.
\end{rem}
These results may be extended to the case where $\cI$ and $\cJ$ are multi-ideals. A simple way to verify this fact is to prove that if $\cI$ is multi-ideal, its $t$ and $\Gamma$ functions coincide with those of an associated basic object    $B_{\cI}$     (\ref{R:getasu}) and then use the result just stated. Since we will not use this fact, we omit  the details.

\begin{pro}
\label{P:ev}
Suppose $\cI=(M,W,(I,b),E)$ and $\cJ=(M,V,(J,c),E)$ are equivalent basic objects of the same dimension,  and assume we have $t$-sequences as (1) and (2) of \ref{R:tga}. 
  Then, the auxiliary multi-ideals 
${{\cI}_s^{\diamond}}$ and  ${{\cI}_s^{\diamond}}$ 
( \ref{V:dia}) are equivalent.
\end{pro}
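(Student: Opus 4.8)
The plan is to verify the two defining conditions of \ref{V:eqiv} for the pair $\cI_s^\diamond$, $\cJ_s^\diamond$. The starting point is the invariance of the $t$-function under equivalence (Remark \ref{R:tga}, in its multi-ideal form): since $\cI \sim \cJ$ and both $t$-sequences use the same center at every step, one has $t[j] = t'[j]$ for all $j$, and in particular $\max(t[s]) = \max(t'[s])$; thus the numerical data entering \ref{V:dia} agree, i.e. $b[s]/b = c[s]/c$, the two sequences drop the $\o$-maximum at the same index $q$, and the numbers of exceptional hypersurfaces to be incorporated coincide. Since the same centers are used throughout, the ambient schemes and the full hypersurface tuples $E_j$ literally coincide (cf. \ref{R:abig}), so $E_s^-$ and $E_s^+$ are common to both sequences and the ideals $L[s]$, $L'[s]$ are given by the same recipe (on $W_s$, resp. $V_s$). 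With this, condition (a) follows: by Proposition \ref{P:idiam}(ii) --- or directly from \ref{P:sini} together with the description of $\sg(L[s],1)$ --- one has $\sg(\cI_s^\diamond) = \ma(t[s])$ and $\sg(\cJ_s^\diamond) = \ma(t'[s])$, and since $t[s] = t'[s]$ on the common domain $\sg(\cI_s) = \sg(\cJ_s)$, these two sets coincide.

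For condition (b) I would proceed by induction on the length of the local sequences, proving the slightly stronger statement that for every $s$ and every pair of equivalent same-dimensional basic objects carrying parallel $t$-sequences, the associated $\diamond$-multi-ideals have equal singular loci after any pair of parallel local sequences. Given such sequences, inspect the first arrow. If it is a permissible transformation with center $C$, then $C$ is $\cI_s^\diamond$-permissible, hence by \ref{P:idiam}(ii) a $t$-permissible center for $\cI_s$; since $\sg(\cI_s) = \sg(\cJ_s)$, the tuple $E_s$ is common, and $t[s] = t'[s]$, the center $C$ is equally $t$-permissible for $\cJ_s$, hence $\cJ_s^\diamond$-permissible, so both transformations make sense. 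Extending the $t$-sequences by $C$ and applying \ref{P:idiam}(iii): if $\max(t[s]) > \max(t[s+1])$ then, because $t'[j] = t[j]$, also $\max(t'[s]) > \max(t'[s+1])$, and case ($\alpha$) yields $\sg = \emptyset$ on both sides, a property preserved by all three local operations; if $\max(t[s]) = \max(t[s+1])$ then case ($\beta$) together with \ref{P:pro'}(ii)(b) identifies the transforms with $\cI_{s+1}^\diamond$ and $\cJ_{s+1}^\diamond$, which again satisfy the hypotheses, so the inductive hypothesis (with $s+1$ and a shorter sequence) applies. If the first arrow is an extension, I would use that the $\diamond$-construction commutes with extension and that extension sends the equivalent pair $\cI \sim \cJ$, with its parallel $t$-sequences, to an analogous pair of dimension $d+1$. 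If it is a restriction to a common open $U \subset M_s$, I would use that the $\diamond$-construction is local, so the restricted multi-ideals are again of the form covered by the induction, and invoke \ref{R:abig} to carry the equivalence through the earlier transformations. In each case the inductive hypothesis finishes the step.

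The substantive point is not the permissible-transformation case, which is essentially bookkeeping on top of Propositions \ref{P:idiam} and \ref{P:pro'}, but arranging the induction so that the class of objects under consideration is genuinely closed under all three local operations: this needs a careful check that the $\diamond$-construction is local and commutes with extension, and that the equivalence of the underlying basic objects together with the coincidence of their $t$-functions survives these operations, so that the inductive hypothesis is indeed available after an arrow of any of the three types.
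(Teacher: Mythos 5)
Your proposal is correct and follows essentially the same route as the paper's (sketched) proof: both rest on the invariance of the $t$-functions under equivalence from Remark \ref{R:tga} and on the correspondence, via Proposition \ref{P:idiam}, between permissible sequences of the $\diamond$-objects and $t$-sequences extending the given ones, with $\sg({[{\cI}^{\diamond}_s]}_j)=\ma(t[s+j])$. Your treatment merely spells out as an explicit induction on the length of local sequences what the paper defers to Proposition 3.10 of \cite{T2}.
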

The proof is practically the same as that of Proposition 3.10 of \cite{T2}. It is based on the observation that permissible sequences 
 ${{\cI}_s^{\diamond}} \leftarrow {[ {{\cI}_s^{\diamond}}]}_1 \leftarrow \cdots $
correspond to $t$-sequences extending \ref{R:tga} (1), with equalities $\sg({[{{\cI}^{\diamond}}_s]_j})=\ma (\cI _{s+j})$ (and similarly for ${\cJ}^{\diamond}_s$), as well as on the fact that the $t$-functions of equivalent objects coincide (\ref{R:tga}).

\section{An algorithm}
\label{S:C}

In this section, using the concepts developed is sections \ref{S:tools} and \ref{S:A},  we prove the following result:

\begin{thm}
\label{T:funcan}
To each nonzero unresolved  multi-ideal $    \cI = (M,W,(I_1,b_1), \ldots, (I_n,b_n),E)    $  
we may attach  algorithmic resolution  functions $h_i$, $i=0, \ldots, r-1$, where $r \ge 1$ depends on $\cI$ (\ref{V:alre}). This process satisfies conditions (a) and (b) of  
\ref{V:alre} and also the following  condition: 
\begin{itemize}
 \item [(c)] If $\cI = \cI _0 \leftarrow \cdots \leftarrow \cI _r$ is the resolution 
determined by the functions $h_i$ (i.e., the $j$-th center is $C_j:=\ma (h_j)$), then there is an index $s$, $0 < s \le r$, such that 
 the induced sequence 
$\cI = \cI _0 \leftarrow \cdots \leftarrow \cI _s$
 is a $t$-sequence (\ref{V:am4}); while 
  for all $j > s$ the multi-ideal $\cI _j$  is monomial and $C_j$ is the monomial canonical center 
 (\ref{V:am2}).       
\end{itemize}
\end{thm}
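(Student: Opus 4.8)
The argument proceeds by induction on the dimension $d=\dim(\cI)$, the base case $d=0$ being trivial (then $\sg(\cI)$ is either empty or a finite union of points, a permissible center whose blow-up resolves $\cI$). For the inductive step, assume an algorithm with properties (a), (b), (c) is available for all multi-ideals of dimension $<d$. The first move is to reduce to a multi-ideal whose singular locus has codimension $\ge 2$: by Proposition \ref{P:cuno}, if $C$ is the union of the codimension-one components of $\sg(\cI)$, then $C$ is permissible, and after the transformation $\cI\leftarrow\cI_1=\cT(\cI,C)$ the singular locus of $\cI_1$ has no codimension-one component. (If $\cI$ is not already locally nice one must first pass to the coefficient multi-ideal $\cC(\cI)$, but the construction of the algorithm proper begins once the locally-nice, codimension $\ge 2$ situation is in force; in fact the algorithm is built for basic objects and extended to multi-ideals via the associated basic object $B_\cI$ of Remark \ref{R:boj}, whose $t$- and $\Gamma$-functions coincide with those of $\cI$.)

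Next, working with a basic object $\cI$ of dimension $d$ with $\dim\sg(\cI)\le d-2$, I would run the $t$-function machinery of \ref{S:A}. Define $h_0=t[0]$ (the first $t$-function), so $C_0=\ma(t[0])$. By \ref{V:cone} we may assume $\dim\ma(t[0])\le d-2$, so Proposition \ref{P:idiam} produces the locally $2$-nice auxiliary multi-ideal $\cI_0^\diamond$ with $\sg(\cI_0^\diamond)=\ma(t[0])$ and with the same permissible centers as the $t$-permissible centers of $\cI_0$. Since $\cI_0^\diamond$ is locally nice of dimension $d$ with $\dim\sg(\cI_0^\diamond)\le d-2$, the inductive results \ref{V:ut4}--\ref{V:ut5} apply: $\cI_0^\diamond$ has an algorithmic resolution (built from the dimension-$(d-1)$ algorithm applied to its inductive multi-ideals on adapted hypersurfaces, glued by compatibility with equivalence, Proposition \ref{P:heq}), and this resolution satisfies (a) and (b). By \ref{P:idiam}(iii), translating the centers back gives a $t$-sequence $\cI_0\leftarrow\cdots\leftarrow\cI_{r_1}$ along which $\max(t[\,\cdot\,])$ is constant, and at $\cI_{r_1}$ either $\sg(\cI_{r_1})=\emptyset$ or $\max(t[r_1-1])>\max(t[r_1])$. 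Iterate: each drop of $\max(t)$ can happen only finitely often (the $\o$-part lies in a discrete set of rationals bounded by \ref{P:nosube}, and the $n$-part in $\mathbb N$), so after finitely many iterations we reach an index $s$ with either $\sg(\cI_s)=\emptyset$ (resolution attained, $r=s$) or $\max(\o_i[s])=0$ for some $i$, i.e.\ $\cI_s$ is monomial. In the latter case, for $j\ge s$ set $h_j=\Gamma_{\cI_j}$ and $C_j=\ma(\Gamma_{\cI_j})$ the canonical monomial center of \ref{V:am2}; by \ref{V:am2}(b), $\max(\Gamma)$ strictly drops at each step, so after finitely many further steps $\sg(\cI_r)=\emptyset$. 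Concatenating, we obtain the resolution $\cI=\cI_0\leftarrow\cdots\leftarrow\cI_r$ and the functions $h_0,\dots,h_{r-1}$, with (c) built in by construction.

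It remains to verify conditions (a) and (b) for the algorithm on $d$-dimensional multi-ideals. Compatibility with open immersions (a) is routine: every ingredient — the functions $t[j]$, $\Gamma_{\cI_j}$, the auxiliary multi-ideals $\cI_j^\diamond$, the coefficient multi-ideal, and (inductively) the dimension-$(d-1)$ resolution functions — is defined by local formulas and commutes with restriction to open sets (this was noted for $\Gamma$ at the end of \ref{V:am2}, and the inductive statement is part of \ref{V:ut5}). Compatibility with equivalence (b) is the substantive point. Given equivalent basic objects $\cI,\cJ$ of dimension $d$, Remark \ref{R:tga} gives $t[j]=t'[j]$ along parallel $t$-sequences with common centers, so the $t$-phases of the two algorithms agree step for step; Proposition \ref{P:ev} gives $\cI_s^\diamond\sim\cJ_s^\diamond$, whence by the inductive hypothesis (condition (b) in dimension $d-1$, invoked through \ref{V:ut5}) the resolutions of these auxiliary multi-ideals — hence the reconstructed centers for $\cI_s$ and $\cJ_s$ — coincide; and once the monomial stage is reached, $\max(\o[s])=0$ forces $\max(\o'[s])=0$ as well (equivalence), and Remark \ref{R:tga} gives $\Gamma_{\cI_s}=\Gamma_{\cJ_s}$ and the same for all subsequent monomial transforms. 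Thus $h_j=h'_j$ throughout, proving (b). For multi-ideals in general, pass to $B_\cI$ and $B_\cJ$: equivalence of $\cI,\cJ$ gives equivalence of $B_\cI,B_\cJ$ (transitivity through Remark \ref{R:boj}), and the $t$- and $\Gamma$-functions match, so the multi-ideal algorithm inherits (a) and (b). \emph{The main obstacle} is the verification of (b) through the inductive passage to $\cI_s^\diamond$: one must be careful that the adapted hypersurfaces chosen locally for $\cI_s^\diamond$ and $\cJ_s^\diamond$ need not agree, so the coincidence of resolution functions cannot be read off directly but only after invoking $\cI_s^\diamond\sim\cJ_s^\diamond$ together with Proposition \ref{P:heq} to pass to a common inductive object up to equivalence — exactly the glueing-by-functoriality mechanism described in the introduction and carried out in \ref{V:ut5}.
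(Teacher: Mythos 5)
Your proposal is correct and follows essentially the same route as the paper: induction on dimension, reduction to basic objects via $B_{\cI}$, the $t$-function and $\cI^{\diamond}$ machinery with centers obtained by inductively resolving the adapted-hypersurface restrictions, the monomial/$\Gamma$ endgame, and compatibility (a), (b) via Remark \ref{R:tga} and Proposition \ref{P:ev}. The only imprecision is the opening claim that $h_0=t[0]$ and $C_0=\ma(t[0])$ — in the paper $h_0(x)$ is the pair $(t[0](x),\widetilde{h_{Z,0}}(x))$, so the center is the (generally smaller) first center of the resolution of $(\cI_0^{\diamond})_Z$ — but your subsequent description of translating centers back from the resolution of $\cI_0^{\diamond}$ supplies the correct mechanism.
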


\begin{voi}
\label{V:anu}
The proof of the above Theorem is very similar to that of Theorem 4.1 in \cite{T2}. For that reason, in the remainder of this section, we  describe the algorithm but  we omit several verifications, referring instead to \cite{T2}. 

For each positive integer $d$,  we   introduce a totally ordered set
$\Lambda^{(d)}$  and, for each $d$-dimensional multi-ideal $\cI$, we introduce functions
$h_i$ (with domains as in Definition  \ref{V:alre} and  with values in $\Lambda^{(d)}$)  
that are the  resolution functions of our algorithm.

We discuss separately the cases $d=1$ and $d$ arbitrary, where  $d= \dim (W)$. In the sequel, $\cS _1:= {\mathbb Q} \times  {\mathbb N }$ and  
$\cS _2:= {\mathbb Q}  \times {\mathbb Z} \times {\mathbb  Z} ^ {\mathbb N}$. 
\end{voi}

\begin{voi}
\label{V:dem1}
 {\it  The functions $h_i$ when $d=1$}. If $\dim \,(\cI_0) = 1$, we set  $\Lambda ^{(1)}= \cS _1 \cup \cS _2 \cup  \{\infty _1\}$, where if $a \in \cS _2 $ and $b \in \cS _1 $, then $a >b$  and $\infty _1$ is the largest element of the set. First, we define  for 
$x \in \sg (\cI_0)$, $h_0(x)= t(x)$.  Next, if $h_i$ is defined for $i < s$, determining a $t$-permissible sequence 
$\cI_0\leftarrow {\cI}_1 \leftarrow \cdots \leftarrow \cI_s$, then we set, for $x \in \sg(\cI _s)$, $h_s(x)=t[s](x)$, if ${\o}[s](x) > 0$, while we set $h_s (x)=\Gamma _{{\cI}_s} (x)$ in case ${\o}[s](x) = 0$. Since in this one-dimensional situation   $C_j=\ma (h_j)$ is always a finite collection of closed points (for all $j$), it follows that these are permissible centers.

The proof that the above are resolution functions and conditions (a), (b) and (c) in  \ref{T:funcan} are valid can be found in \cite[4.2]{T2}.
\end{voi}  
\begin{voi}
\label{V:dem2}
{\it The functions $h_i$ in general}. 
 Now assuming that algorithmic resolution functions satisfying (a), (b) and (c) are available when $\dim (W) < d $,  we define resolution functions $h_j$ for multi-ideals of dimension $d$. If $d >1$,   
the totally ordered set of values will be 
 $\Lambda ^{(d)}={(\cS _1 \times {\Lambda}^{(d-1)})}\cup \cS _2 \cup \{ \infty  _d\}$,
  where 
 $\cS _1 \times {\Lambda}^{(d-1)}$ is lexicographically ordered, any element of $\cS _2$ is larger than any element of $\cS _1 \times {\Lambda}^{(d-1)}$, and 
 $\infty _d$ is the largest element of $\Lambda ^{(d)}$.

 We shall deal first with the case where $\cI$ is a $d$-dimensional marked ideal or basic object, i.e., in the notation of Definition \ref{D:multi-ideal}, $n=1$. But notice that our inductive hypothesis is that  algorithmic resolution  functions satisfying (a), (b) and (c) are defined not just for basic objects but for {\it multi-ideals} of dimension $<d$. 
 
Given a nonzero marked ideal, or basic object,  $\cI_0=(M_0,W_0,(I_0,b),E_0)$ of dimension $d$,   we shall describe first the corresponding resolution function $h_0$.  So, if $x \in \sg({\cI}_0)$, we must give the value $h_0(x)$. We necessarily  have $\o [0] (x) > 0$. Let $N_1$ be the union of the 1-codimensional components of $\ma \, (t[0])$ with  two cases: ($i$)  $N_1 \not= \emptyset $ ($ii$) $N_1 = \emptyset $.   

In case ($i$), set $h_{0}(x)= {\infty}_d$ if $x \in N_1$, and set $h_0(x)=(t[0])(x),\infty _{d-1})$ otherwise.  So, the 0-th center $C_0$ is $N_1$. By \ref{V:cone}, $C_0=N_1$ is a permissible center.  

In case ($ii$), pick up an open neighborhood $U$ of $x$ such that the restriction ${{\cI}^{\diamond}_{0}}|U$ is nice (see \ref{V:dia}), and let $Z$ be an
adapted hypersurface.
 Consider the inductive multi-ideal ${\cI ^*}_{Z}:={(    {{\cI}^{\diamond}_{ 0}}|U     )}_{Z}$,  which is  nonzero multi-ideal by our assumption on $N_1$. By induction on the dimension,  resolution functions $\widetilde{h_{Z,j}}$ are defined for the multi-ideal $\cI^*_{Z}$. 

Then set 
$h_0(x):=(t[0](x),\widetilde{h_{Z,0}}(x))$.
   We claim that if  a different  open set and adapted hypersurface were chosen,   the result would be the same. First of all, since $t[0]$ and,  by induction,  the resolution functions 
  ${h_{Z,0}}$
  are compatible with restrictions to open sets, we may assume that the open set $U$ is the same in both cases. Let $Z'$ be the new adapted hypersurface. Now, by \ref{P:ev}, ${\cI^*}_{Z}  {\sim} {\cI^*}_{Z'}$. Since 
$\dim (Z)=\dim(Z')<d$, by induction, (b) is satisfied and
 $\widetilde{h_{Z,0}}(x)=\widetilde{h_{Z',0}}(x)$. So, 
 the value $h_0(x)$ is independent of the choices, and  the function  $h_0$ is well defined. 

\end{voi}

\begin{voi}
\label{V:dem2.1}
Suppose now that the  resolutions functions $h_i$, $i=0, \ldots, j-1$, satisfying (a), (b)  and (c)  of \ref{T:funcan}   have been defined, determining  centers $C_i = \ma \,({h_i}), i=0, \ldots, j-1$, and leading to a permissible sequence of basic objects:
 $$(1) \quad {\cI}_0 \leftarrow \cdots \leftarrow {\cI}_j, ~ {\cI}_i=(M_i,W_i,I_i,b,E_i), ~i=0, \ldots, j,~ j \ge 0 ~.$$
 We assume that if ${\cI}_{j-1}$ is not a monomial object, then (1) is a $t$-sequence. 
 Two cases are possible: ($\alpha$) $\max ({\o }[j]) =0$, and ($\beta$) $\max ({\o }[j]) > 0$. 
 
In case ($\alpha$), ${\cI}[j]$ is monomial. For $x \in \sg({\cI}_j)$,  let $\Gamma _{\cI _j}$ be its $v$-th $\Gamma$-function and set $h_j(x):=
\Gamma _{\cI _j } (x)$. 

In case ($\beta$), letting $N_1(j)$ denote the union of the one-codimensional components of $\ma (t[j])$, there are two sub-cases: ($\beta _1$)    
$N_1(j) \not= \emptyset$ and (${\beta}_2$) $N_1(j) = \emptyset$. 

In case (${\beta}_1$), set $g_j(x)=\infty _d$ if    $x \in N_1(j)$,  and set $g_j(x) = (t_j(x), \infty _{d})$ if $x \in \sg({\cI}_j)$ but  $x \notin  N_1(j)$.
  In case   (${\beta}_2$), which might be called {\it  the inductive situation}, if 
$x \in \sg({\cI}_j) \setminus  \ma (t[j])$, set $h_s(x)=(t[j](x), {\infty}_{d})$. If 
$x \in \ma (t[j])$, let $s$ be the smallest index such that $\max(t[s])=\max(t[j])$, and $N_1(s)$, the union of one-codimensional components of 
$\ma (t[s])$, be   empty. Let $x_s$ be the image of $x$ in $W_s$. Proceeding as in case $j=0$ (\ref{V:dem2}), we       pick up an  open neighborhood $U$ of $x_s$
  such that 
  $    {\cI _s ^*}_{Z_s}:=        {{\cI _s}^{\diamond}}|U$ is nice, 
 with adapted hypersurface $Z_s \subset U$.  We consider the inductive, necessarily nonzero, multi-ideal 
 ${\cI _s}^*_{Z_s}$ of dimension $d-1$. By induction on the dimension,  ${\cI _s}^*_{Z_s}$  has resolution functions $\widetilde{h_{Z_s,q}}$, $q=s, \ldots, s'$, where necessarily  $s' \ge j$. Set $h_j(x):=(t[j](x),\widetilde{h_{Z_s,j}}(x))$.

The final result is not affected by a different choice of the  open set  $U$ or the adapted hypersurface.   The proof is similar to that of case ($ii$) in 
  \ref{V:dem2}. Details may be found in 
\cite[4.4]{T2}.
\end{voi}
\begin{voi}
\label{V:dem4}
The process to successively produce the functions 
 described above terminates, and $h_0,h_1, \ldots$ are algorithmic resolution functions for the basic object $\cI$. Indeed,  in the notation of \ref{T:funcan} (c), for a suitable index $r$, we have $\sg (\cI_r)=\emptyset$. This statement is true because, by using the   results of 
\ref{V:am2}, \ref{V:apli} and \ref{V:cone} and  applying  the inductive hypothesis,  one sees that the functions $h_0, h_1, \ldots $  really take values in a well-ordered subset of the mentioned set $\Lambda ^{(d)}$ and they are stricly decreasing. 
 Details can be seen in \cite[4.5]{T2}.
\end{voi}
\begin{voi}
\label{V:dem5}
So far we have assumed that $\cI$ is a $d$-dimensional basic object. Suppose now 
$\cI=(M,W,(I_1,b_1), \ldots , (I_n,b_n),E)$
 is $d$-dimensional multi-ideal with, possibly, $n>1$. To finish the inductive construction, we must attach  suitable resolution functions  to $\cI$.

According to Remark \ref{R:boj}, there is a basic object $B=B_{\cI}=(M,W,(J,c),E)$ that  is equivalent to $\cI$.  If 
$h_{B,0}, \ldots, h_{B,r}$ are the algorithmic resolution functions for $B$ constructed in the previous paragraphs then, by the equivalence
$\cI \sim B$, they are also resolution functions for $\cI$. 

Had we chosen a second basic object $B'=(M,W,(J'c'),E)$, $B' \sim \cI$ then,  by transitivity of equivalence, $B \sim B'$. Since the algorithmic resolution functions $h_{B,j}$ and $h_{B',j}$ for $B$ and $B'$ respectively satisfy condition (b) of \ref{T:funcan},  it follows that 
$h_{B,j} = h_{B',j}$ for all values of $j$. 

Thus if we set $h_j:=h_{B,j}$, $j=1, \ldots, r$, where $B$ is any basic object equivalent to $\cI$, we have well-defined resolution functions attached to $\cI$. The functions $h_j$ 
satisfy conditions (a), (b), (c) of \ref{T:funcan} because 
the functions  $h_{B,j}$  satisfy them.   

So, we have  defined algorithmic resolution functions  as desired for any multi-ideal of dimension $d$, completing the inductive step and thus proving Theorem \ref{T:funcan}. 
\end{voi}

\section{Multi-ideals over artinian rings}
\label{S:Ap}

Here we explain how, using techniques similar to those of \cite{T1},  the algorithm of Section \ref{S:C} can be partially extended to multi-ideals over suitable artinian rings.
 In Theorem \ref{T:aut} we will see that, in the inductive situation,  the present algorithm behaves better than the one we used in \cite{T1}.

\begin{voi}
\label{V:ap2}
We keep the notation and terminology introduced in \ref{V:m1}. In addition,   
 we use the symbol ${\mathcal A}$ to denote the collection of artinian local rings $(A,M)$ whose residue field 
 has characteristic zero, and we write  $S:=  \Spec (A)$.

 A smooth $A$-scheme is a scheme $W$ together with a 
 a smooth morphism $p:W \to S$. Its only fiber will be  denoted by $W^{(0)}$, 
which  is an algebraic variety smooth over the field $A/M$. If $I \subset {\cO}_{W}$ is a $W$-ideal, 
 then $I^{(0)}:=I{\cO}_{W^{(0)}}$ is called the fiber of $I$. 

Let  $R={\cO}_{W,w}$,
${\bar R}={\cO}_{W^{(0)},w}$ (the local ring of the fiber at $w$, which is regular). A system of elements $a_1, \ldots, a_n$ in $R$ is called a regular system of parameters of $R$ relative to $p$, or simply  an $A$-regular system of parameters,  if the induced elements  $a_1^{(0)}, \ldots,  a_n^{(0)}$ in ${\bar R}$ form a regular system of parameters in the usual sense. 

Elements of an $A$-regular system of parameters necessarily form  a regular sequence in the local ring ${\cO}_{W,w}$ (see  \cite[11.2]{T1}). 

A hypersurface on $W$ over $S$, or an $S$- (or $A$-) hypersurface, is a positive Cartier divisor $H$, flat over $S$, inducing over the fiber $W^{(0)}$  a regular codimension one subscheme $H^{(0)}$.

We may define the notions of {\it a system $E$  of $S$-hypersurfaces of $W$ with  normal crossings (relative to $S$)} and 
{\it a subscheme $C$ of $W$  having normal crossings with $E$ (or being  transversal to $E$) over $S$, or relative to $S$}, as in \ref{V:m2}, but now working with 
{\it $A$-regular system of parameters}, rather than the usual regular systems. See \cite[3.4 and 6.1]{T1} for details.

If $C$ has normal crossings with $E$ relative to $S$, then  the induced projection $C \to S$ is smooth and  the
blowing-up $W_1$ of $W$ with center $C$ is also $S$-smooth. Moreover, the blowing-up of a smooth $A$-scheme $W$ with an $A$-smooth center $C$ is again smooth over $S$ (see \cite[11.5]{T1}). 

\end{voi}

\begin{voi}
\label{V:ap3}   If  $W$ is a smooth $S$-scheme, $I$ is a $W$-ideal, and 
 $C \subset W$  is an irreducible subscheme of $W$, smooth over $S$ 
 defined by the $W$-ideal
$J \subset {\cO}_W$,  
we shall say that the order of $I$ along
$C$  is $\geq m$,
written
$\nu (I,C) \geq m$, if $  I \subseteq J ^m     $. We write
$\nu (I,C) = m$ if $m$ is the largest integer such that $\nu (I,C) \geq m$.

If $w$ is a point of $C$, we  say that the order of $I$ at $w$ along $C$ is $\ge m$ (written $\nu_w (I,C) \ge m$) if $I_w \subseteq {J_w}^m$ 
    in the local ring ${\cO}_{W,w}$.

Let  $x_1, \ldots,  x_q$ be an $A$ -regular system of parameters of  ${\cO}_{W,w}$, so that $J_w$ is defined by the ideal  $(x_s, \ldots, x_q)$ for an appropiate index $s \ge 1$.  The completion 
$R^{\star}$ of ${\cO}_{W,w}$  with respect to $(x_1, \ldots, x_q)$ is isomorphic to a power series ring $R'[[x_1, \ldots, x_q]]$, 
 for a suitable ring $R'$. 
Then we have: 
$\nu_w (I,C) \ge m$ if and only if each $f \in I_w \subseteq R^{\star}$ is in $(x_s, \ldots, x_q)^m$. That is, when $f$, regarded  as an element of the completion 
$R^*=R'[[x_1, \ldots, x_q]]$,  is 
a power series in
$x_s, \ldots, x_q$, with coefficients in 
$ R'[[x_1, \ldots, x_{s-1}]]$ of order $\ge m$ (see \cite[3.8]{T1}).

We have $\nu(I,C) \ge m$ if and only if  $\nu _y(I,C) \ge m$, where $y$ is the generic point of $C$. Also, $\nu (I,C) \ge m$ if and only if 
$\nu _z (I,C) \ge m$ for all closed points  $z$ in a dense open subset of $C$.

\end{voi}

\begin{voi}
\label{V:moal}
 (a) If $W$ is a smooth $S$-scheme, an $A$-pair is an ordered pair $(I,b)$, where $I$ is a $W$-ideal and   $b$ a positive integer. Note that in \cite[3.4]{T1} the terminology is different.  A finite sequence of $A$-pairs is called an $A$-{\it multipair}.

(b) A closed subscheme $C$ of $W$ is a {\it permissible center} for the $A$-pair $(I,b)$ if $C$ is smooth over $S$, and if for every irreducible component $D$ of $C$ we have $\nu(I,D) \ge b $.

 Looking at fibers, we always have $\nu(I^{(0)},D^{(0)}) \ge \nu(I,d)$. 
 So, if $C$ is a permissible center for the 
 $A$-pair 
 $(W \to S,b)$, then $C^{(0)}$  is a
  permissible center for its fiber $(W ^{(0},b)$. 

(c)  Given  an $A$-pair  $(W \to S,b)$, the notions of {\it total, controlled} and {\it proper} transforms of a $W$-ideal $I$, when we blow up a permissible center of $W$, may be defined as in \ref{V:par} (ii) (see also \cite[4.5]{T1}). The operations of controlled and total transforms are compatible with that of taking fiber.
\end{voi}

\begin{voi}
\label{V:ap4}
{\it A-multi-ideals}.  Let    $A \in \cA$ and  $S =\Spec (A)$. An $A$-multi-ideal 
 is a system 
$$(1) \qquad \cI=({p:M \to S}, W, (I_1,b_1), \ldots, (I_n,b_n),E) \, , $$
where $p$ is a smooth morphism, $E=(H_1, \ldots,H_m)$ is a sequence of $A$-hypersurfaces of $M$ with normal crossings relative to $S$, $W$ is a closed subscheme of $M$, transversal to $E$ (hence automatically $S$-smooth), and $(I_i,b_i)$ is an $A$-pair of all $i$. 

If, in (1),  $n=1$ then $\cI$ is called an  $A$-marked ideal or an $A$-basic object. 

There is a natural notion of {\it fiber} ${\cI}^{(0)}$, which is a multi-ideal over $k$, the residue field of $A$. 
The $A$-multi-ideal $\cI$ is {\it nonzero} if its fiber $\cI ^{(0)}$ is nonzero (\ref{D:multi-ideal}).  

 If an index 
$v \in \{  1, \ldots, n         \}$ is fixed,  then 
a closed subscheme $C$ of $W$ is called a  {\it v-permissible center} 
 for $\cI$  if the following conditions are satisfied: (a) $C$ has normal crossings with $E$ relative to $A$, (b) $C$ is a permissible center for each of the $A$-pairs $(I_i,b_i)$, $i = 1, \ldots, n$ (\ref{V:moal}),  and  (c) 
 for each irreducible component $D$ of $C$, $\nu({I_v}^{(0)},D^{(0)}) = \nu(I_v,D) \ge b_v   $.

 Such a center $C$ is automatically smooth over $S$  \cite[11.2]{T1}. 

 The {\it transform} of an $A$-multi-ideal $\cI$ with a $v$-permissible center $C$, denoted by $\uT (\cI,C)$, is the multi-ideal  
 $(M \to S, W, (I_1[1],b_1), \ldots, (I_1[n],b_n),E')$
where, for each $i$, $(I_i[1],b_i)$ is the controlled transform of the $A$-pair $(I_i[1],b_i)$ and $E'$ consists of the strict transforms of the $A$-hypersurfaces in $E$ and the exceptional divisor,    which is again an $A$-hypersurface (see 
\cite[3.11 and 3.12]{T1}).

 A subscheme $C$ of $W$ is a  {\it permissible center }
 for $\cI$ if there is an index $v \in \{   1,\ldots,n      \}$ such that $C$ if
   {\it v-permissible center } for $\cI$. 

 A sequence of $A$-multi-ideals  
 $ \cI=  \cI_0   \leftarrow \cdots \leftarrow \cI_r   $, 
 where each arrow represents a permissible transformation, is called an $A$-permissible sequence. Such a sequence is an {\it equiresolution} if  $\sg ({\cI _r}^{(0}) = \emptyset$.
\end{voi}

\begin{voi}
\label{V:no} In \ref{R:boj}, we associated a basic object $B_{\cI}$ to a given a multi-ideal $\cI$ (over a field), and  
 $\cI$ and  $B_{\cI}$ were equivalent. If now $\cI$ is an $A$-multi-ideal, $A \in \cA$ (\ref{V:ap2}), we may define an associated $A$-basic object ${\cB}_{\cI}$ exactly as in \ref{R:boj}. We may   also adapt the definition of equivalence to the context of $A$-multi-ideals.  However, we can no longer say that $\cI$ is equivalent to ${\cB}_{\cI}$, as shown by the next example.
\end{voi}

\begin{exa}
\label{E:noe}  Consider the $A$-multi-ideal  $\cI = (M \to S,W, (I_1,b_1),(I_2,b_2), \emptyset)$  where $S= \Spec (A)$, 
  $A= k[\epsilon]=k[T]/(T^2)$ ($k$ is the field of complex numbers),
 $M=W={\mathbb A}^{1}_A= \Spec (R)$,  $R$ being the polynomial ring $A[x]$, $ I_1=(x^3)R$, $I_2=(\epsilon x+x^3)R$, $ b_1 = 3$, and  $b_2 =2$. Then, the associated $A$-basic object is 
$\cB=({M \to S},W, (x^6+3 \epsilon x^7 + x^9)R,\emptyset)=(M \to S,W, (x^6)R,\emptyset) $.  If $C$ is the subscheme of $W$ defined by $x$, i.e., $I(C)=(x)R$, then $C$ is a $\cB$-center but not an $\cI$-center. Indeed, 
$(I_2,b_2)=((\epsilon x + x^3),2)$ and so $\nu (I_2,C)=1$, hence  $\nu (I_2,C) < b_2$. So, $C$ is not a not  a permissible center for $\cI$. Consequently, $\cI$ and $\cB$ are not equivalent.
\end{exa}
 
 \begin{voi}
 \label{V:ap5} 
 (a) {\it The relative operator $\Delta$}. 
In the context of $A$-multi-ideals, $A \in \cA$, it is possible to define an operation 
$\Delta$ relative to the base $S=\Spec(A)$. Indeed, given 
 a coherent sheaf of ideals $I$ over a scheme $W$, smooth over $S$, of relative dimension $d$, define 
${\Delta}(I/S):=I + {\cF}_{d-1}(\Omega _{Y/S})$, where $Y$ is the closed subscheme of $W$ defined  by $I$ and 
${\cF} _{d-1}$ denotes the ${(d-1)}$-Fitting ideal. Then one defines 
${\Delta}^{(j)}(I/S)$ recursively:  
${\Delta}^{(0)}(I/S):=I$ and, if $j\ge1$, then 
${\Delta}^{(j)}(I/S)= {\Delta} ({\Delta}^{(j-1)}(I/S))$.

The sheaf    ${\Delta}^{(j)}(I/S)$          has the following property. Assume  $w$ is a closed point of $W$ and  $x_1, \ldots , x_d$ is an $A$-regular system of parameters of ${\cO}_{W,w}$. Consider the completion $R$ of  ${\cO}_{W,w}$ with respect to these parameters, which  is isomorphic to a power series ring $A'[[x_1, \ldots,x_d]]$, for a suitable artinian ring $A'$. Then 
${\Delta}^{(j)}(I/S)R$ is the ideal of $R$ generated by elements of $IR$ as well as their partial derivatives, up to order $j$ (see \cite[3.9]{T1}).

  \bigskip

 (b) {\it The coefficient multi-ideal}.     If    $\cI = (M \to S, W,(I,b),E)$ is       an $A$-marked ideal,  
  we define its associated  {\it coefficient multi-ideal}  $\cC(I/S)$ relative to $A$, or to $S=\Spec (A)$,   as we did in the case of basic objects over a field (\ref{V:ut1}). Namely, 
$$  \cC(I/S) = (M \to S, W, (I,b), ({\Delta}^{(1)}(I/S)),1), \ldots, ({\Delta}^{(b-1)}(I/S)), 1), E) \, . $$
Then, taking fibers, we have:  $    {\cC(I/S)}^{(0)}     =     \cC(I^{(0)})     $.
  \end{voi}
 \begin{voi} 
\label{V:ap6}
{\it Adapted  hypersurfaces and nice objects.} If 
 $\cI={(M \to S,W, (I,b),E)}$
 is  an $A$-marked ideal,  
the notions of {\it coefficient $S$-multi-ideal $(C(I/S)$} and   {\it adapted $S$-hypersurface}  
 are defined as in \ref{V:ut1} and \ref{V:ut3}. The only difference is that now we work with the relative sheaves ${\Delta}^{(j)}(I/S)$. We say that $\cI$ is {\it $A$-nice} if it admits an $A$-adapted hypersurface. If $\cI$ is  $A$-nice,  $Z$ is an adapted $S$-hypersurface and the restriction ${\cI}(C(I/S)|Z ={\cI}_Z$ is nonzero, then we say that $\cI _Z$ is the 
  {\it inductive  multi-ideal induced by $\cI$ on $Z$}..

\smallskip

Assume $\cI$ is a nice $A$-marked ideal and $Z$ a  hypersurface such that  $\cI _Z$  is nonzero. 
 Let  $C$ be a closed subscheme of $ Z \subset W$. As we shall see  in Theorem \ref{T:aut},  if $C$   is a permissible center ${\cC(\cI)}_Z$,  then $C$ is also a permissible center for $\cI$. But let us begin  with a more basic proposition.
 \end{voi}
\begin{pro}
\label{P:indi}
Assume that 
$\cI = (M \to S, W, (I,b), E)$ is an $A$-marked ideal with adapted $A$-hypersurface $Z$, such that the inductive $A$-multi-ideal ${\cI} _Z$ is nonzero. Let $C $ be a closed subscheme of $Z$ (hence  also of $W$) with normal crossings with respect  to $E$ (relative to $S$), such that 
$$ (1)  \quad\nu ({\Delta}^{(i)}(I/S)|Z,C))\ge b-i, ~ i=0, \ldots, b-1\, .$$ 
Then, $C$ is a permissible center for ${\cI}$.
\end{pro}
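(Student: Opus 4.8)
The plan is to reduce permissibility of $C$ for $\cI$ to the hypothesis $(1)$ by passing to completions at a closed point and exploiting the relative $\Delta$-operator, exactly as in the over-a-field argument of Proposition~\ref{P:sin}, but now keeping track of the relative structure over $S = \Spec(A)$. Recall that to say $C$ is a permissible center for the $A$-marked ideal $\cI$ means two things: that $C$ has normal crossings with $E$ relative to $S$ (which is hypothesized), and that $\nu(I,D) \ge b$ for every irreducible component $D$ of $C$ (\ref{V:moal}(b)). By \ref{V:ap3}, it suffices to check $\nu_w(I,C) \ge b$ at the generic point, equivalently at a dense set of closed points $w$ of $C$. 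So the whole argument localizes: fix a closed point $w \in C$, pick an $A$-regular system of parameters $x_1,\dots,x_{d-1},z$ of $\cO_{W,w}$ with $z$ defining $Z$ near $w$ and $x_1,\dots,x_{e-1},z$ (say) generating $I_W(C)_w$ for a suitable $e$, and work in the completion $R \cong A'[[x_1,\dots,x_{d-1},z]]$ of $\cO_{W,w}$, with $\bar R \cong A'[[x_1,\dots,x_{d-1}]]$ the completion of $\cO_{Z,w}$.

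The core computation is the power-series expansion along $z$. Take any $f \in I_w R$ and write $f = a_0 + a_1 z + a_2 z^2 + \cdots$ with $a_i \in \bar R$. Setting $z = 0$ gives $a_0 \in (I|Z)_w \bar R = (\Delta^{(0)}(I/S)|Z)_w\bar R$, and applying $\partial^i/\partial z^i$ and then setting $z=0$ gives (in characteristic zero, up to the unit $i!$) that $a_i \in (\Delta^{(i)}(I/S)|Z)_w \bar R$ for $0 \le i < b$; this is just the property of the relative operator recorded in \ref{V:ap5}(a). Now hypothesis $(1)$, read via \ref{V:ap3} at the point $w$, says precisely that every element of $(\Delta^{(i)}(I/S)|Z)_w\bar R$, expanded as a power series in those of $x_1,\dots,x_{e-1}$ lying in $I_Z(C)_w$ with coefficients in the remaining variables, has order $\ge b-i$ along $C \cap Z$ (which as a subscheme of $Z$ is cut out inside $\bar R$ by the same parameters that cut $C$ inside $R$, minus $z$). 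Hence $\nu_w(a_i z^i, C) \ge (b-i) + i = b$ for each $i$, and therefore $\nu_w(f,C) \ge b$. Since $f \in I_w R$ was arbitrary and $I_w R = I_w \otimes_{\cO_{W,w}} R$ generates, we get $I_w \subseteq I_W(C)_w^{\, b}$ in $R$, hence in $\cO_{W,w}$ by faithful flatness of completion; as $w$ ranges over a dense set of closed points of each component $D$, this yields $\nu(I,D)\ge b$, so $C$ is permissible for $\cI$.

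The main obstacle is not the analytic expansion but the bookkeeping forced by non-reducedness of $A$: one must be careful that the completion $R$ is a power series ring over the \emph{artinian} ring $A'$ rather than over a field, that ``order $\ge m$ along $C$'' is the relative notion of \ref{V:ap3} — i.e. membership in a power of the ideal generated by the subset of the $A$-regular parameters defining $C$, with coefficients in the complementary power series ring — and that the relative $\Delta^{(i)}(I/S)$, not the absolute one, is what behaves well here. A secondary point is checking that taking the partial derivative $\partial/\partial z$ is a legitimate operation on $R = A'[[x_1,\dots,x_{d-1},z]]$ and lands in $\Delta^{(1)}(I/S)$; this is exactly the content of the local description in \ref{V:ap5}(a), so it can be cited rather than reproved. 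Once these relative notions are used consistently, the proof is a verbatim transcription of part~(a) of the proof of Proposition~\ref{P:sin}, and I would present it in that compressed form, flagging only where ``$k$'' must be replaced by ``$A'$'' and ``$\nu_y$'' by ``$\nu_y(\,\cdot\,,C)$''.
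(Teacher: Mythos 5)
Your power-series computation is exactly the paper's: expand $f\in I_wR$ in $R\cong A'[[x_1,\dots,x_{d-1},z]]$ along $z$, identify the coefficients $a_i$ as elements of $(\Delta^{(i)}(I/S)|Z)\bar R$ via \ref{V:ap5}(a), and use hypothesis (1) together with $z\in I_W(C)_w$ to get $f\in I_W(C)_w^{\,b}$, hence $\nu(I,C)\ge b$. That part is fine.

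However, there is a genuine gap: you are using the wrong notion of permissibility. You check only the conditions of \ref{V:moal}(b) (permissibility of $C$ for the single $A$-pair $(I,b)$), but the conclusion of the proposition is that $C$ is a permissible center for the $A$-marked ideal $\cI$, and by \ref{V:ap4} this additionally requires condition (c): the \emph{equality} $\nu(I^{(0)},D^{(0)})=\nu(I,D)$ for each irreducible component $D$ of $C$. This equality of the order with the order of the fiber is precisely the condition that can fail over a non-reduced base --- it is the ``pathology'' described in the introduction and the reason Theorem \ref{T:aut} is the point of Section \ref{S:Ap} --- so it cannot be omitted. The paper closes this by observing that $Z$ adapted for $\cI$ forces $Z^{(0)}$ to be adapted for the fiber $\cI^{(0)}$, so $\cI^{(0)}$ is nice and hence the pair $(I^{(0)},b)$ is simple, giving $\nu(I^{(0)},C^{(0)})=b$ exactly; combined with the general inequality $\nu(I^{(0)},D^{(0)})\ge\nu(I,D)$ of \ref{V:moal}(b) and your $\nu(I,C)\ge b$, one squeezes $b=\nu(I^{(0)},C^{(0)})\ge\nu(I,C)\ge b$ and obtains the required equality. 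You should add this final step.
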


 \begin{proof} 
 Let us verify first that  $\nu(I,C) \ge b$.  It suffices to show:  
  $$(2) \qquad \nu_y(I,C) \ge b, ~ \forall y \in C, ~y ~ {\mathrm {closed}} $$
(see \ref{V:ap3}). To prove the inequality (2), take a closed point $y$ of $C$ and an $A$-regular system of parameters 
  $(x_1,\ldots, x_d)$ of ${\cO}_{W,y}$ such that, writing $z=x_d$, $z$ defines $C$ near $y$   and $I(C)_y=(x_s, \ldots, x_d){\cO}_{W,y}$, for a suitable index $s$. Consider the completion $R$ of ${\cO}_{W,y}$, with respect to these parameters. We have 
  $R=A'[[x_1, \ldots,x_{d-1},z]]$ (for a suitable ring $A'$, see \cite[11.6]{T1}). Then the elements $x_1, \ldots, x_{d-1}$ induce an $A$-regular  system of parameters of 
${\cO}_{Z,y}={\cO}_{W,y}/ (z)$ and the  
 completion $\bar{R}$  
  of ${\cO}_{Z,y}$ with respect to these 
  may be written as  
  $\bar{R}=A'[[x_1, \ldots, x_{d-1},z]]/(z)=A'[[x_1, \ldots, x_{d-1}]]$. To verify (2) it suffices  to show that for any 
  $  g \in {I}_y  \subset A'[[x_1, \ldots, x_{d-1},z]]       $, if we write $g$ as a power series in $x_1, \ldots, x_{d-1},z$ with coefficients in $A'$, then
$g \in (x_s, \ldots, x_d)^{b}$. 
 Let the  expression of  $g$ as a power series be 
    $g=a_0({\bf x})+a_1z({\bf x})+\cdots + a_{b_i -1}({\bf x})z^{b_i-1}+a_b({\bf x})z^{b_i}+\cdots $, where ${\bf x}=(x_1, \ldots, x_{d-1})$. 
 Then  we have:
$$a_q ({\bf x})=  (1/q!)    {(\partial ^{q} g/ \partial z ^q)}(x_1, \ldots, x_{d-1},0) \in \Delta ^{(q)}(I) {\bar R}   \, .              $$
By the assumed inequalities (1),  if $q=0, \ldots, b-1$, $a_q({\bf x}) \in (x_s, \ldots, x_{d-1}){\bar R} ^{b-q}$. This implies that  $g \in (x_s, \ldots, x_d)^{b}R$, hence  the inequality (2) is verified.

Furthermore, the fact  that $Z$ is an adapted hypersurface for $\cI$ implies that $Z^{(0)}$ is an adapted hypersurface for the fiber $\cI ^{(0)}$, hence 
${\cI}^{(0)}$ is nice and therefore good (\ref{V:ut3}). Consequently,
$b=\nu(I ^{(0)},C^{(0)})  \ge \nu(\cI ,C) \ge b $ and thus  $b=\nu(I ^{(0)},C^{(0)})  = \nu(\cI ,C)$. Since, by assumption, $C$ has normal crossings with $E$, $C$ is a permissible $A$-center for $\cI$. 
 \end{proof}

 \begin{thm}
 \label{T:aut} Let $\cI$ be a nice  $A$-basic object with adapted hypersurface $Z$, such that the inductive $A$-multi-ideal $\cI _Z$ is nonzero (this happens when the codimension of $\sg (\cI)$ is $ >1)$. 
  If $C$ is a permissible $\cI_Z$-center, then $C$ is a permissible $\cI$-center.  
  \end{thm}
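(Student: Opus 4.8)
The plan is to reduce Theorem \ref{T:aut} to Proposition \ref{P:indi}. By definition of the inductive $A$-multi-ideal $\cI_Z$, its pairs are precisely $(I|Z, b), (\Delta^{(1)}(I/S)|Z, b-1), \ldots, (\Delta^{(b-1)}(I/S)|Z, 1)$. If $C$ is a permissible $\cI_Z$-center, then $C \subseteq \sg(\cI_Z)$ and, by the definition of $v$-permissibility in \ref{V:ap4} applied to each pair of $\cI_Z$, for every irreducible component $D$ of $C$ we have $\nu(\Delta^{(i)}(I/S)|Z, D) \ge b - i$ for $i = 0, \ldots, b-1$. Since the order along $C$ is governed by the generic points of the components of $C$ (see \ref{V:ap3}), this gives exactly the inequalities $(1)$ of Proposition \ref{P:indi}, namely $\nu(\Delta^{(i)}(I/S)|Z, C) \ge b - i$ for $i = 0, \ldots, b-1$.

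Next I would check that $C$ has normal crossings with $E$ relative to $S$. This is part of the hypothesis that $C$ is a permissible $\cI_Z$-center, since the set of hypersurfaces carried by $\cI_Z$ is (the restriction of) $E$, and permissibility always includes the normal-crossings condition. Because $Z$ is transversal to $E$, a subscheme of $Z$ having normal crossings with $E|Z$ also has normal crossings with $E$ inside $W$; alternatively one may note directly that the relevant $A$-regular system of parameters of $\cO_{W,y}$ adapted to $C$ and $Z$ simultaneously exists. With both the order inequalities $(1)$ and the normal-crossings condition in hand, Proposition \ref{P:indi} applies verbatim and yields that $C$ is a permissible center for $\cI$.

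I expect the only genuine subtlety to be bookkeeping: matching the indexing conventions for the pairs of $\cI_Z$ (the exponents $b-i$ versus $i$ in the two ways the coefficient multi-ideal is written in \ref{V:ut1} and \ref{V:ap5}(b)) and making sure the passage from ``$\nu$ at a closed point'' to ``$\nu$ along $C$'' via \ref{V:ap3} is invoked in the right direction. The analytic core — expanding a section $g \in I_y$ as a power series in the local coordinate $z$ defining $Z$, reading off the coefficients as relative derivatives of $g$ along $z$, and using the inequalities $(1)$ to conclude $g \in (x_s, \ldots, x_d)^b$ — is already carried out inside the proof of Proposition \ref{P:indi}, so there is nothing new to prove here. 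The statement is therefore essentially a restatement of \ref{P:indi} once one observes that permissibility of $C$ for $\cI_Z$ is exactly the conjunction of hypotheses $(1)$ and the normal-crossings condition of that proposition.
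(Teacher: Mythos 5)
Your proposal is correct and follows exactly the route of the paper's own proof: observe that permissibility of $C$ for $\cI_Z$ supplies both the normal-crossings condition and the order inequalities $\nu(\Delta^{(i)}(I/S)|Z,C)\ge b-i$, $i=0,\ldots,b-1$, and then invoke Proposition \ref{P:indi}. The paper's proof is a two-line reduction of precisely this form, so your additional remarks on indexing and on passing from closed points to generic points are just the bookkeeping the paper leaves implicit.
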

 \begin{proof} By hypotehsis, $C$ has normal crossings with $E$ relative to $S$,  and since we also have the inequalities 
$\nu ({\Delta}^{(i)} (I/S))|Z,C)\ge b-i$, $i= 0, \ldots, b-1$, we may apply Proposition \ref{P:indi}.
  \end{proof}

On the contrary, if $C$ is $B$-permissible center  it does not necessarily follows that   $C$ is ${\cB}_Z$-permissible  center, as shown by the following example.

\begin{exa}
 \label{E:exma}  Consider  the $A$-basic object ${\cB}=(M \to S,W,  (I,2), \emptyset)$,  
 where  
  $A$ is as in Example  \ref{E:noe}, $S= \Spec (A)$,  
 $M=W=\Spec (A[x,y])$, $Z$ is the subscheme of $W$ defined by the ideal $(z)A[x,y]$ (hence  $Z=\Spec (A[x])$), and 
 $I=(z^2 + \epsilon x^2, z^3 +x^3) A[x,y]$. 

 Here  $C$  is a $\cB$-center because 
$  \nu (I,C)=\nu(I^{(0)},C^{(0)})=2 $. 
 But $C$ is not a $\cB _Z $-center. Indeed, 
$ \Delta ^{(0)}(I/S)|Z=  (\epsilon x^2,x^3) $,  $ \Delta ^{(1)}(I/S)|Z=  (\epsilon x,x^2) $; hence  
$${\cB} _Z=(M \to S,Z, (J_1   ,2), ( J_2  ,1), \emptyset)\, {\mathrm {with}} \,  J_1=(\epsilon x^2,x^3)A[x]  ~  {\mathrm{and}} \, J_2=  (\epsilon x, x^2)A[x] \, .  $$ 
  Then $\nu(J_i,C) \not= \nu(J_i ^{(0)},C^{(0)})$,  $i=1,2$, proving that  $C$ is not a ${\cB}_Z$-center.
\end{exa}

\providecommand{\bysame}{\leavevmode\hbox to3em{\hrulefill}\thinspace}

\end{document}